\numberwithin{equation}{section}
\theoremstyle{plain}
\newtheorem{theorem}[equation]{Theorem}
\newtheorem{corollary}[equation]{Corollary}
\newtheorem{lemma}[equation]{Lemma}
\theoremstyle{definition}
\newtheorem{definition}[equation]{Definition}
\newtheorem{notation}[equation]{Notation}
\theoremstyle{remark}
\newtheorem{remark}[equation]{Remark}
\newcommand{\SL}{{\mathrm{SL}}}
\newcommand{\Proj}{\mathbb{P}}
\newcommand{\Hom}{{\mathrm{Hom}}}
\newcommand{\iHom}{{\underline{\mathrm{Hom}}}}
\newcommand{\Z}{{\mathbb{Z}}}
\newcommand{\GW}{{\mathrm{GW}}}
\newcommand{\K}{\mathrm{K}}
\newcommand{\MW}{\mathrm{MW}}
\newcommand{\A}{\mathbb{A}}
\newcommand{\SSp}{\mathbb{S}}
\newcommand{\PP}{\mathbb{P}}
\newcommand{\rank}{\operatorname{rank}}
\newcommand{\Th}{\operatorname{Th}}
\newcommand{\Cone}{\operatorname{Cone}}
\newcommand{\colim}{\operatorname*{colim}}
\newcommand{\triv}{\mathbf{1}}
\newcommand{\struct}{\mathcal{O}}
\newcommand{\Gm}{{\mathbb{G}_m}}
\newcommand{\GL}{\mathrm{GL}}
\newcommand{\SH}{\mathcal{SH}}
\newcommand{\Ht}{\mathrm{H}}
\newcommand{\T}{\mathrm{T}}
\newcommand{\Spec}{\operatorname{Spec}}
\newcommand{\coker}{\operatorname{coker}}
\newcommand{\coh}{\mathrm{H}}
\newcommand{\Sph}{\mathrm{S}}
\newcommand{\Zar}{\mathrm{Zar}}
\newcommand{\Sm}{\mathrm{Sm}}
\newcommand{\Fr}{\mathrm{Fr}}
\newcommand{\ZF}{\mathbb{Z}\mathrm{F}}
\newcommand{\Nis}{\mathrm{Nis}}
\newcommand{\heart}{\Pi_*(k)}
\newcommand{\RS}{\mathcal{RS}_\bullet}
\begin{document}

\title{On the zeroth stable $\A^1$-homotopy group of a smooth curve}

\author{Alexey Ananyevskiy}
\email{alseang@gmail.com}
\address{
	St. Petersburg Department of Steklov Institute of Mathematics, Russian Academy of Sciences,\\
	Fontanka 27, St. Petersburg 191023 Russia \\
	Chebyshev Laboratory, St. Petersburg State University, 14th Line V.O. 29B, Saint Petersburg 199178 Russia}

\begin{abstract}
	We provide a cohomological interpretation of the zeroth stable $\A^1$-homotopy group of a smooth curve over an infinite perfect field. We show that this group is isomorphic to the first Nisnevich (or Zariski) cohomology group of a certain sheaf closely related to the first Milnor--Witt $\K$-theory sheaf. This cohomology group can be computed using an explicit Gersten-type complex. We show that if the base field is algebraically closed then the zeroth stable $\A^1$-homotopy group of a smooth curve coincides with the zeroth Suslin homology group that was identified by Suslin and Voevodsky with a relative Picard group. As a consequence we reobtain a version of Suslin's rigidity theorem.
\end{abstract}

\maketitle

\section{Introduction}

A.~Suslin and V.~Voevodsky computed singular homology (also known as Suslin homology) groups of a smooth relative curve admitting a good compactification. For a curve over a field the result for the zeroth Suslin homology group reads as follows.

\begin{theorem}[{\cite[Theorem~3.1]{SV96}}] \label{thm:suslin-voevodsky}
Let $C_o$ be a smooth curve over a field $k$, let $C$ be its smooth compactification and $D=C-C_o$. Then there exists a natural isomorphism
\[
\coh^S_0(C_o)\cong \mathrm{Pic}(C,D).
\]
\end{theorem}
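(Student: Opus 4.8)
The plan is to realize both groups as explicit quotients of the group $z_0(C_o)$ of zero-cycles on $C_o$ and then to match the two subgroups of relations. On the homotopy side one has $\coh^S_0(C_o)=\coker\left(\mathrm{Cor}(\A^1,C_o)\xrightarrow{\partial} z_0(C_o)\right)$, where $\mathrm{Cor}(\A^1,C_o)$ is the group of finite correspondences from $\A^1$ to $C_o$ (integral cycles on $\A^1\times C_o$ finite and surjective over $\A^1$), $z_0(C_o)$ is the group of zero-cycles, and $\partial$ is the difference of the two fibre-restriction maps over the endpoints of $\A^1$. On the Picard side, since $C$ is a smooth projective curve one has the standard divisorial description $\mathrm{Pic}(C,D)=z_0(C_o)/R$, where $R=\{\operatorname{div}_C(f):f\in k(C)^\times,\ f\equiv 1\bmod D\}$ is the group of divisors of rational functions that are units near $D$ with value $1$ at each point of $D$ (these are precisely the principal divisors admitting a trivialization along $D$). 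With these presentations the candidate map $\Phi$ is induced by the identity on $z_0(C_o)$, so the whole statement reduces to the single equality of subgroups $\partial\,\mathrm{Cor}(\A^1,C_o)=R$ inside $z_0(C_o)$; naturality in $C_o$ is then automatic.

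For the inclusion $\partial\,\mathrm{Cor}(\A^1,C_o)\subseteq R$ I would take an integral finite correspondence $Z\subseteq\A^1\times C_o$, discard the \emph{horizontal} components $\A^1\times\{c_0\}$ (which contribute $0$ to $\partial$), and form the closure $\overline Z\subseteq\Proj^1\times C$ together with its normalization $\widetilde Z\to\overline Z$, a smooth projective curve carrying a finite map $g\colon\widetilde Z\to\Proj^1$ (the pulled-back coordinate $t$) and a finite map $\pi\colon\widetilde Z\to C$. Setting $\phi=(g-1)/g\in k(\widetilde Z)^\times$, one computes $\operatorname{div}_{\widetilde Z}(\phi)=g^{-1}(1)-g^{-1}(0)$, whose image under $\pi_*$ is exactly the fibre difference $Z_1-Z_0=\partial Z$. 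By compatibility of divisors with norms along the finite map $\pi$ one gets $\partial Z=\pi_*\operatorname{div}_{\widetilde Z}(\phi)=\operatorname{div}_C(N_\pi\phi)$ for some $N_\pi\phi\in k(C)^\times$. Finally $N_\pi\phi\equiv 1\bmod D$: since $Z$ is finite, hence proper, it is closed in $\A^1\times C$, so $\overline Z\cap(\A^1\times C)=Z\subseteq\A^1\times C_o$, which forces every point of $\widetilde Z$ lying over $D$ to lie over $t=\infty$, i.e.\ to be a pole of $g$; there $\phi=1-1/g=1$, so the norm is $1$ along $D$. Thus $\partial Z\in R$.

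For the reverse inclusion $R\subseteq\partial\,\mathrm{Cor}(\A^1,C_o)$ I would argue using only graphs. Given $f\equiv 1\bmod D$ (the case $f=1$ being trivial), set $h=1/(1-f)\colon C\to\Proj^1$. Because $f\equiv 1$ along $D$ the function $1-f$ vanishes on $D$, so $h(D)=\{\infty\}$; consequently the transpose graph $Z_h=\{(h(c),c)\}\cap(\A^1\times C)$ is contained in $\A^1\times C_o$ and, $h$ being a finite morphism, is a genuine finite correspondence from $\A^1$ to $C_o$. A direct computation of its two fibres gives $h^{-1}(1)=f^{-1}(0)$ and $h^{-1}(0)=f^{-1}(\infty)$ (with multiplicities), whence $\partial Z_h=f^{-1}(0)-f^{-1}(\infty)=\operatorname{div}_C(f)$. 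Hence every element of $R$ is a boundary, and combined with the previous paragraph this yields $\partial\,\mathrm{Cor}(\A^1,C_o)=R$ and therefore the natural isomorphism $\coh^S_0(C_o)\cong\mathrm{Pic}(C,D)$.

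I expect the main obstacle to be the well-definedness direction $\partial\,\mathrm{Cor}(\A^1,C_o)\subseteq R$, and within it the verification that $N_\pi\phi$ is congruent to $1$ along $D$: this is where the geometry of the compactification genuinely enters, since it requires controlling the points of $\overline Z$ over $D$ and identifying them as lying over $t=\infty$. The norm-compatibility identity $\pi_*\operatorname{div}(\phi)=\operatorname{div}(N_\pi\phi)$ and the reduction to non-horizontal components are standard but must be handled carefully; by contrast the converse inclusion is essentially a one-line construction once one thinks to replace $f$ by $1/(1-f)$.
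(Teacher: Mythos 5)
Your argument is correct, but it is worth saying that the paper does not prove this statement at all: it is quoted from Suslin--Voevodsky, and the mechanism indicated in the introduction is sheaf-theoretic --- both $\coh^S_0(C_o)$ and $\mathrm{Pic}(C,D)$ are shown in \cite{SV96} to sit in the same four-term exact sequence arising from the Gersten-type resolution of $(\Gm)_{C,D}$, with the general case of \cite[Theorem~3.1]{SV96} resting on homotopy invariance of the relative Picard functor. Your route is a more elementary, hands-on version of the same comparison: you present both groups as quotients of $z_0(C_o)$ and verify $\partial\,\mathrm{Cor}(\A^1,C_o)=R$ by exhibiting explicit rational functions, $(g-1)/g$ on the normalized closure of a correspondence for one inclusion and $1/(1-f)$ for the other. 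Both directions check out: the key geometric point (that $Z$, being finite over $\A^1$, is closed in $\A^1\times C$, so points of $\overline Z$ over $D$ sit over $t=\infty$ and $\phi=1-1/g$ takes the value $1$ on the whole fibre $\pi^{-1}(D)$, whence $N_\pi\phi\equiv 1\bmod D$) is exactly right, and the fibre computations $h^*[1]=f^*[0]$, $h^*[0]=f^*[\infty]$ are correct. What this buys is a self-contained proof needing only two standard inputs, which you should acknowledge as such: the presentation $\mathrm{Pic}(C,D)\cong \mathrm{Div}(C_o)/R$ (this is \cite[Lemmas~2.1--2.3]{SV96} and itself requires a weak-approximation argument for surjectivity of $\mathrm{Div}(C_o)\to\mathrm{Pic}(C,D)$), and the projection-formula/norm compatibility $\pi_*\operatorname{div}_{\widetilde Z}(\phi)=\operatorname{div}_C(N_\pi\phi)$ together with the identification of the scheme-theoretic fibres of the possibly non-normal $Z$ with pushforwards of pullback divisors from $\widetilde Z$. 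What it loses relative to \cite{SV96} is generality (their argument works for relative curves over a base) and the structural explanation --- homotopy invariance of $\mathrm{Pic}(C,D)$ --- of why boundaries of correspondences die; but for the statement over a field your proof is complete.
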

\noindent
The goal of this paper is to give a similar description of the zeroth stable motivic homotopy group $\pi^{\A^1}_0(C_o)$.

The relative Picard group $\mathrm{Pic}(C,D)$ from Theorem~\ref{thm:suslin-voevodsky} consists of the isomorphism classes of pairs $(L,\phi)$ with $L$ being a line bundle over $C$ and $\phi\colon L|_{D} \xrightarrow{\simeq} \triv_{D}$ being a trivialization of $L$ over $D$. The group structure is given by the tensor product. As in \cite[Lemma~2.1]{SV96}, one can identify the relative Picard group with sheaf cohomology groups,
\[
\mathrm{Pic}(C,D) \cong \coh^1_{\Zar}(C,(\Gm)_{C,D}) \cong \coh^1_{\Nis}(C,(\Gm)_{C,D}) \cong \coh^1_{\mathrm{\acute{e}t}}(C,(\Gm)_{C,D}),
\]
where $(\Gm)_{C,D}=\ker \left[ (\Gm)_C \to (i_D)_* (\Gm)_D \right]$ with $i_D\colon D\to C$ being the closed embedding. In other words, $(\Gm)_{C,D}$ is the sheaf of invertible functions on $C$ trivial (equal to $1$) over $D$. Moreover, one has the following long exact sequence \cite[Lemma~2.3]{SV96}.
\[
0\to (\Gm)_{C,D}(C) \to \{f\in k(C)\,|\, \text{$f$ is defined and equal to $1$ over $D$} \} \to \bigoplus_{x\in C_o^{(1)}} \Z \to \mathrm{Pic}(C,D) \to 0.
\]
One can regard the two terms in the middle as a Gerten-type resolution for $(\Gm)_{C,D}$ and the exact sequence yields that the first sheaf cohomology group of $(\Gm)_{C,D}$ can be computed as the first cohomology group of the corresponding Gersten-type complex. In \cite{SV96} the authors show that  $\coh^S_0(C_o)$ also sits in the above exact sequence, thus  $\coh^S_0(C_o)\cong \mathrm{Pic}(C,D)$. From our point of view the essential part of the theorem is that the Suslin homology group $\coh^S_0(C_o)$ is isomorphic to the sheaf cohomology group of $(\Gm)_{C,D}$ (and sits in the above exact sequence), while $\mathrm{Pic}(C,D)$ gives a nice geometrical interpretation of the latter group.

The zeroth Suslin homology group of a curve can be defined as the group of morphisms in Voevodsky's triangulated motivic category $\mathrm{DM}(k)$ \cite{MVW06} from the motive of the base field to the motive of the curve,
\[
\coh^S_0(C_o) = \Hom_{\mathrm{DM}(k)}(M(\Spec k), M(C_o)).
\]
It looks reasonable to address the similar problem for the zeroth stable motivic homotopy group, i.e. to compute
\[
\pi^{\A^1}_0(C_o) = \Hom_{\SH(k)}(\Sigma^\infty_\T (\Spec k)_+, \Sigma^\infty_\T (C_o)_+),
\]
where $\SH(k)$ is the stable motivic homotopy category \cite{MV99,V98,Mor04}. A.~Asok and C.~Haesemeyer described this group for a smooth projective variety {\cite[Theorem~4.3.1]{AH11}}; see also Theorem~\ref{thm:hom_variety} of the present paper for a slight variation of the result: for the description of
\[
\Hom_{\SH(k)}(\Sigma^\infty_\T (\Spec k)_+, \Sigma^l_{\Gm}\Sigma^\infty_\T X_+)
\]
with $X$ being smooth and projective. For a curve the result looks as follows.
\begin{theorem}[{\cite[Theorem~4.3.1]{AH11}} or Theorem~\ref{thm:hom_variety}] \label{thm:intr_AH}
Let $C$ be a smooth projective curve over an infinite perfect field $k$ of characteristic unequal to $2$. Then there exists a natural isomorphism
\[
\pi^{\A^1}_0(C) \cong \coh^1_\Nis(C,\K_1^{\MW}\otimes \omega_{C}).
\]
\end{theorem}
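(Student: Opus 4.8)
The plan is to use that $C$ is smooth and projective, so that $\Sigma^\infty_\T C_+$ is strongly dualizable in $\SH(k)$. By motivic Atiyah--Spanier--Whitehead duality its dual is the Thom spectrum $\Th(-T_C)$ of the (virtual) tangent bundle, and the duality adjunction converts the group we want into a cohomology group of this Thom spectrum:
\[
\pi^{\A^1}_0(C) = [\triv, \Sigma^\infty_\T C_+]_{\SH(k)} \cong [\Th(-T_C), \triv]_{\SH(k)}.
\]
So the first task is to record the duality and this reformulation; the only external input is that smooth projective varieties are dualizable with the expected dual.

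Next I would filter $\triv$ by its Postnikov tower for Morel's homotopy $t$-structure and evaluate $[\Th(-T_C), \triv]$ layer by layer. The heart of this $t$-structure is the category of homotopy modules, each of which is a module over Milnor--Witt $\K$-theory; hence the Eilenberg--MacLane spectrum $\Ht\underline{\pi}_n(\triv)$ of every layer is $\SL$-oriented and admits the \emph{twisted} Thom isomorphism, which for a virtual bundle of rank $r$ shifts the bidegree by $(2r,r)$ and twists the coefficients by the determinant line bundle. For $-T_C$ we have $\rank = -1$ and $\det(-T_C) = (\det T_C)^\vee = \Omega^1_{C/k} = \omega_C$, so the $n$-th layer contributes
\[
\coh^{n+1}_\Nis(C, \underline{\pi}_n(\triv)_1 \otimes \omega_C),
\]
the weight-$1$ part of the $n$-th homotopy module twisted by $\omega_C$. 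Morel's connectivity theorem kills the layers with $n<0$, while $C$ has Nisnevich cohomological dimension $1$, so every layer with $n\geq 1$ lands in cohomological degree $\geq 2$ and vanishes. Only $n=0$ survives, and consequently there are no differentials or extension problems to resolve.

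It remains to insert Morel's identification $\underline{\pi}_0(\triv)_* \cong \K^{\MW}_*$ of the bottom homotopy module, which turns the surviving $n=0$ term into $\coh^1_\Nis(C,\K^{\MW}_1 \otimes \omega_C)$ and yields the asserted isomorphism. The group on the right is then concretely computable through the Rost--Schmid (Gersten-type) resolution of $\K^{\MW}_1 \otimes \omega_C$, recovering the complex mentioned in the introduction. The standing hypotheses on $k$ enter precisely here: Morel's computation of $\underline{\pi}_0(\triv)$ and the strict $\A^1$-invariance of $\K^{\MW}_*$ (hence the Rost--Schmid resolution) require $k$ infinite and perfect, and the good behaviour of Milnor--Witt $\K$-theory and of the $\SL$-oriented Thom isomorphism is where $\operatorname{char} k \neq 2$ is used.

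The part I expect to be most delicate is the bookkeeping around the twisted Thom isomorphism: one must verify that the relevant coefficient spectra really are $\SL$-oriented (so that the determinant twist, and with it $\omega_C$, is the only twist that appears) and that the bidegree shift is tracked correctly, since this is exactly what pins down both the cohomological degree $1$ and the weight $1$ in $\K^{\MW}_1$. The remaining steps --- the duality and the collapse of the Postnikov filtration --- are robust, the latter being forced by the dimension of $C$. In the general case of a smooth projective $X$ of dimension $d$ (and an arbitrary $\Gm$-suspension) this entire argument is the content of Theorem~\ref{thm:hom_variety}; the statement here is its specialization to $X=C$ with $d=1$, where the cohomological-dimension bound collapses the spectral sequence onto the single group $\coh^1_\Nis(C,\K^{\MW}_1 \otimes \omega_C)$.
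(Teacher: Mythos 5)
Your argument is correct and follows the same skeleton as the paper's proof of Theorem~\ref{thm:hom_variety} (of which this statement is the $d=1$, $l=0$ case): Atiyah duality to pass to $[\Th(-T_C),\SSp]$, reduction to the zeroth truncation for the homotopy $t$-structure, Morel's identification $\underline{\pi}^{\A^1}_0(\SSp)_*\cong\K^{\MW}_*$, and the twisted ($\SL^c$-oriented) Thom isomorphism computed via the Rost--Schmid resolution, with the Jouanolou device turning $-T_C$ into an honest bundle $E$ with $\det E\cong\rho^*\omega_C$. The one place you genuinely diverge is the truncation step: you run the Postnikov tower for the homotopy $t$-structure and kill the layers $n\ge 1$ using $\mathrm{cd}_{\Nis}(C)=1$, whereas the paper avoids any cohomological-dimension input (and any convergence discussion for the tower) by observing that the dual of a connective strongly dualizable object lies in ${}^{\perp}\SH(k)_{t\ge 1}$ --- via $[\mathcal{X}^\vee,A]\cong[\SSp,\mathcal{X}\wedge A]$ and connectivity of the smash product (Lemmas~\ref{lm:dual_coh_dim} and~\ref{lm:cohomotopy-cohomology}) --- so that $[\Th(-T_C),\SSp]\cong[\Th(-T_C),\Ht^{\mathrm{t}}_0(\SSp)]$ in one stroke. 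Your route works too (and your degree bookkeeping, giving $\coh^{n+1}_\Nis(C,\underline{\pi}_n(\SSp)_1\otimes\omega_C)$ per layer, is right), but you should at least note why the tower converges against $\Th(-T_C)$; the paper's orthogonality argument sidesteps this and generalizes uniformly to any dimension. One small correction: the hypothesis $\operatorname{char}k\neq 2$ is not actually used in the paper's proof --- the $\SL^c$-orientation of homotopy modules (Lemma~\ref{lm:SL_oriented}) and Morel's theorem need only $k$ infinite and perfect --- it appears in the statement only because it is inherited from the formulation in \cite{AH11}.
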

\noindent 
The unramified Milnor--Witt $\K$-theory sheaf $\K_n^{\MW}$ that appears in the theorem can be defined by means of the cartesian square 
\[
\xymatrix{
\K_n^{\MW} \ar[r] \ar[d] & I^n \ar[d] \\
\K_n^{\mathrm{M}} \ar[r] &  \K_n^{\mathrm{M}}/2\K_n^{\mathrm{M}} \cong I^n/I^{n+1},
}
\]
where $I=\ker\left[ W \xrightarrow{\rank} \Z/2\Z \right]$ is the sheaf associated to the fundamental ideal in the Witt ring and $\K_n^{\mathrm{M}}$ is the unramified Milnor $\K$-theory sheaf (see \cite[Chapter~3]{Mor12}). This description gives rise to an action of $\Gm$ on $\K_n^{\MW}$: $a\in \Gm(U)=k[U]^*$ acts trivially on $\K_n^{\mathrm{M}}(U)$ and via multiplication with the rank one quadratic form $\langle a \rangle$ on $I^n(U)$. The action allows one to twist $\K_n^{\MW}$ by a line bundle, see Definition~\ref{def:twist} for the details. In particular, one has the twisted Milnor--Witt $\K$-theory sheaf $\K_n^{\MW}\otimes \omega_{C}$ being the sheaf associated with the presheaf
\[
U \mapsto \K_n^{\MW}(U) \otimes_{\Z[k[U]^*]} \Z[\Gamma(U,\omega_{C})^0],
\]
where $\Gamma(U,\omega_{C})^0$ is the set of nowhere vanishing sections (i.e. trivializations) of the canonical bundle $\omega_{C}$ restricted to $U$.

The cohomology group from Theorem~\ref{thm:intr_AH} can also be computed using a two term Gersten complex (Rost-Schmid complex in the notation of \cite[Chapter~5]{Mor12}),
\[
(\K_1^{\MW}\otimes \omega_{C})(k(C)) \to \bigoplus_{x\in C^{(1)}} \GW(k(x)) \to \coh^1_\Nis(C,\K_1^{\MW}\otimes \omega_{C}) \to 0,
\]
see \cite[{Corollary~4.1.12}]{AH11} or Theorem~\ref{thm:Hom_coh} of the present paper.

A close comparison of the results of Suslin--Voevodsky and Asok--Haesemeyer combined with the isomorphisms $\Gm\cong \K_1^{\mathrm{M}}$, $\Z\cong \K_0^{\mathrm{M}}$ and $\GW\cong \K_0^{\MW}$ immediately suggests a general statement for $\pi^{\A^1}_0(C_o)$ that is proved in the present paper.

\begin{theorem}[{Theorem~\ref{thm:hom_curve} and Remark~\ref{rem:hom_curve}}] \label{thm:hom_curve_intr}
Let $C_o$ be a smooth curve over an infinite perfect field $k$, let $C$ be its smooth compactification and $D=C-C_o$. Then there exist natural isomorphisms
\[
\pi^{\A^1}_0(C_o)\cong \coh^1_{\Nis}(C,(\K_1^{\MW}\otimes \omega_{C})_{C,D})\cong \coh^1_{\Zar}(C,(\K_1^{\MW}\otimes \omega_{C})_{C,D}),
\]
where 
\[
(\K_1^{\MW}\otimes \omega_{C})_{C,D}= \ker\left[ \K_1^{\MW}\otimes \omega_{C} \to (i_D)_*(\K_1^{\MW}\otimes i_D^*\omega_{C}) \right]
\]
for the closed embedding $i_D\colon D\to C$. 

Moreover, there is an exact sequence
\[
\{a\in (\K^{\MW}_{1}\otimes \omega_C)(k(C))\,|\, \text{$a$ is defined and trivial over $D$}\} \to \bigoplus_{x\in C_o^{(1)}} \mathrm{GW} (k(x)) \xrightarrow{\Theta} \pi^{\A^1}_0(C_o) \to 0
\]
such that $\Theta(\langle 1 \rangle_y)=\Sigma^\infty_T (i_y)_+$ for a rational point $y\in C_o(k)$, the associated closed embedding $i_y\colon \Spec k \to C_o$ and $\langle 1 \rangle_y$ being the unit quadratic form belonging to the corresponding direct summand.
\end{theorem}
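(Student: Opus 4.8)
The plan is to reduce to the projective curve $C$, for which Theorem~\ref{thm:intr_AH} is available, by means of the localization (homotopy purity) cofiber sequence attached to the open--closed decomposition $C=C_o\sqcup D$, and then to match the resulting long exact sequence of homotopy groups with the long exact sequence in Nisnevich cohomology coming from the definition of the relative sheaf $(\K_1^{\MW}\otimes\omega_C)_{C,D}$. It is essential to work with the compactification: as in the Suslin--Voevodsky picture recalled in the introduction, $\pi^{\A^1}_0(C_o)$ is a relative invariant (an analogue of $\mathrm{Pic}(C,D)$, not of $\mathrm{Pic}(C_o)$), so a naive computation of the cohomology of $C_o$ alone would give the wrong answer. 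First I would record the purity cofiber sequence $\Sigma^\infty_\T (C_o)_+ \to \Sigma^\infty_\T C_+ \to \Sigma^\infty_\T\Th(N_{D/C})$ in $\SH(k)$ and apply $\Hom_{\SH(k)}(\SSp,-)=\pi^{\A^1}_0$, producing a long exact sequence that relates $\pi^{\A^1}_0(C_o)$, $\pi^{\A^1}_0(C)$, and the homotopy groups of the Thom spectrum of the normal bundle $N_{D/C}$.

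The next step is to evaluate the homotopy of $\Th(N_{D/C})$. Since $D$ is zero-dimensional the Thom spectrum splits as a wedge $\bigvee_{x\in D}\Th(N_x)$, each summand being, by homotopy purity, a twisted $(2,1)$-sphere over $\Spec k(x)$. Using Morel's computation of the zeroth graded homotopy sheaf of the sphere spectrum in terms of Milnor--Witt $\K$-theory (in particular $\pi^{\A^1}_1(\Th(N_x))\cong\K^{\MW}_1(k(x))$, twisted by $N_x$, while $\pi^{\A^1}_0(\Th(N_x))=0$ by connectivity) together with the transfer along the finite separable extension $k(x)/k$, I would identify $\pi^{\A^1}_1(\Th(N_{D/C}))$ with $\bigoplus_{x\in D}(\K^{\MW}_1\otimes\omega_C)(k(x))$. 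Here the twist must be tracked carefully: the adjunction isomorphism $\omega_C|_D\cong N^\vee_{D/C}$ on the smooth curve, combined with the fact that twisting Milnor--Witt $\K$-theory by a line and by its dual yields canonically isomorphic sheaves (the $\GW$-action depends only on square classes), converts the normal-bundle twist appearing in the Thom spectrum into the canonical-bundle twist $\omega_C$ appearing on the cohomological side. The vanishing of $\pi^{\A^1}_0(\Th(N_{D/C}))$ moreover shows that $\pi^{\A^1}_0(C_o)\to\pi^{\A^1}_0(C)$ is surjective.

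On the algebraic side I would use the defining short exact sequence of Nisnevich sheaves $0\to(\K_1^{\MW}\otimes\omega_C)_{C,D}\to\K_1^{\MW}\otimes\omega_C\to(i_D)_*(\K_1^{\MW}\otimes i_D^*\omega_C)\to 0$, whose surjectivity follows from the extension property of Milnor--Witt $\K$-theory along the points of $D$. Because $(i_D)_*$ of a sheaf on the zero-dimensional $D$ has no higher cohomology, the associated long exact sequence terminates as $\bigoplus_{x\in D}(\K_1^{\MW}\otimes\omega_C)(k(x))\to\coh^1_\Nis(C,(\K_1^{\MW}\otimes\omega_C)_{C,D})\to\coh^1_\Nis(C,\K_1^{\MW}\otimes\omega_C)\to 0$. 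Identifying the right-hand term with $\pi^{\A^1}_0(C)$ via Theorem~\ref{thm:intr_AH} and the left-hand contribution with $\pi^{\A^1}_1(\Th(N_{D/C}))$ via the previous step, I would assemble a commutative ladder between the two long exact sequences and conclude $\pi^{\A^1}_0(C_o)\cong\coh^1_\Nis(C,(\K_1^{\MW}\otimes\omega_C)_{C,D})$ by the five lemma. The identification with Zariski cohomology is then automatic: the relative sheaf is the kernel of a morphism of strictly $\A^1$-invariant sheaves, hence itself strictly $\A^1$-invariant, and for such sheaves on smooth $k$-schemes Nisnevich and Zariski cohomology agree.

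For the explicit exact sequence I would pass to the Rost--Schmid (Gersten) complex of the relative sheaf, in exact parallel with the $\Gm$-computation of Suslin--Voevodsky. The key elementary point is that the Milnor--Witt residue $\partial_x$ vanishes on sections that are regular and trivial at $x$; hence the boundary components indexed by the points of $D$ are identically zero, and the complex collapses to $\{a\in(\K^{\MW}_1\otimes\omega_C)(k(C))\,|\,a\text{ trivial over }D\}\xrightarrow{\partial}\bigoplus_{x\in C_o^{(1)}}\GW(k(x))$, whose cokernel is $\coh^1_\Nis(C,(\K_1^{\MW}\otimes\omega_C)_{C,D})$ and hence $\pi^{\A^1}_0(C_o)$, with $\Theta$ the cokernel projection. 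The equality $\Theta(\langle 1\rangle_y)=\Sigma^\infty_\T(i_y)_+$ for a rational point $y$ would follow by tracing the homotopical fundamental class of $y$ through the purity isomorphism to the unit generator of the corresponding $\GW(k)$-summand. I expect the principal obstacle to be the compatibility underlying the ladder of the third step: one must verify that the connecting homomorphism of the localization cofiber sequence coincides, under the identifications above, with the cohomological boundary, i.e.\ with the $\omega_C$-twisted Milnor--Witt residue maps. This forces a careful reconciliation of the geometric purity and transfer maps with the algebraically defined residues, and it is precisely here that the twist comparison $\omega_C|_D\cong N^\vee_{D/C}$ and the naturality of Morel's identifications must be deployed together; the same compatibility, specialized to a rational point, is what yields the final statement about $\Theta$.
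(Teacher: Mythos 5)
Your overall strategy is sound and shares its key ingredients with the paper (Atiyah duality, the purity cofiber sequence for $(C,D)$, connectivity of $\SSp$ and Morel's computation $\underline{\pi}_0^{\A^1}(\SSp)_*\cong\K_*^{\MW}$), but it is organized differently: the paper does not form a ladder and apply the five lemma. Instead it dualizes the cofiber sequence once and for all, obtaining $(\Sigma^\infty_\T (C_o)_+)^\vee\cong\Sigma_\T^{-N}\Sigma^\infty_\T(\Th(E)/\Th(E|_{\widetilde D}))$ on a Jouanolou device, replaces $\SSp$ by its truncation $\Ht^{\mathrm{t}}_0(\SSp)$ using connectivity, and then computes $[\Th(E)/\Th(E|_{\widetilde D}),\Ht^{\mathrm{t}}_0(\SSp)(N+l)[N+1]]$ directly via a relative Rost--Schmid complex for the pair $(C,D)$ built in Section~4; the identification with $\coh^1_{\Nis}(C,(\K^{\MW}_{l+1}\otimes\omega_C)_{C,D})$, the Zariski comparison, the two-term presentation of Remark~\ref{rem:hom_curve}, and the formula $\Theta(\langle 1\rangle_y)=\Sigma^\infty_\T(i_y)_+$ (via Lemma~\ref{lm:dual_point}) all come out of that single computation. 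Your plan to instead deduce the middle isomorphism from the outer terms of two long exact sequences is legitimate in principle, but it does not avoid the hard compatibility you flag at the end: identifying the homotopical boundary $\bigoplus_{x\in D}\K_1^{\MW}(k(x);\omega_C|_x)\to\pi_0^{\A^1}(C_o)$ with the sheaf-cohomological connecting map is essentially the content of the paper's Lemmas~\ref{lm:RSCD_coh} and~\ref{lm:fibration_RS_CD}, so the five-lemma route saves less than it appears to.

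There is, moreover, a concrete gap at the left end of your ladder. The term preceding $\pi_1^{\A^1}(\Th(N_{D/C}))$ in the homotopy long exact sequence is $\pi_1^{\A^1}(C)=[\SSp[1],\Sigma^\infty_\T C_+]$, and this group is \emph{not} isomorphic to $\coh^0_{\Nis}(C,\K_1^{\MW}\otimes\omega_C)$: dualizing and running the coniveau spectral sequence, it has a two-step filtration whose graded pieces are $\coh^0(C,\underline{\pi}_1^{\A^1}(\SSp)_0\otimes\omega_C)$... no --- more precisely $\coh^0(C,\K_1^{\MW}\otimes\omega_C)$ and $\coh^1(C,\underline{\pi}_1^{\A^1}(\SSp)_0\otimes\omega_C)$, the latter involving the first stable homotopy sheaf of the motivic sphere, which is not computed here. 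So the four outer vertical maps of your ladder are not all isomorphisms, and the standard five lemma is unavailable as stated. What saves the argument is the four lemma: one only needs the map $\pi_1^{\A^1}(C)\to\coh^0_{\Nis}(C,\K_1^{\MW}\otimes\omega_C)$ to be \emph{surjective}, and it is, because for a curve the coniveau spectral sequence has two columns and degenerates at $E_2$, so the edge map onto $E_\infty^{0,0}=\coh^0(C,\K_1^{\MW}\otimes\omega_C)$ is onto (equivalently, $[\Th(E),\SSp(1)_{t\ge 1}(N-1)[N]]=0$). This point must be stated and proved; without it the reduction to the projective case does not close up. Finally, your claim that the Zariski--Nisnevich comparison is ``automatic'' because the relative sheaf is a kernel of strictly $\A^1$-invariant sheaves needs care: $(\K_1^{\MW}\otimes\omega_C)_{C,D}$ lives on the small site of $C$ and involves the pushforward $(i_D)_*$, and the paper instead verifies the comparison by checking that the relative Rost--Schmid resolution is acyclic in the Zariski topology, which uses the surjectivity of $\K^{\MW}_{*}(\Spec\struct_{C,x})\to\K^{\MW}_{*}(x)$ (Lemma~\ref{lm:RSCD_res}).
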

\noindent We also give a similar description for the group $\Hom_{\SH(k)}(\Sigma^\infty_\T (\Spec k)_+,\Sigma^l_{\Gm}\Sigma^\infty_\T (C_o)_+)$, see Theorem~\ref{thm:hom_curve}.

In the case of a projective curve $C_o=C$ we obtain precisely the description given by A.~Asok and C.~Haesemeyer. On the other hand, the right-hand sides in Theorems~\ref{thm:suslin-voevodsky} and~\ref{thm:hom_curve_intr} are the same up to the substitution of the Milnor--Witt $\K$-theory for the Milnor $\K$-theory. Indeed, one can easily show that $\K_1^{\mathrm{M}}\otimes \omega_{C}\cong \K_1^{\mathrm{M}}$, since the action of $\Gm$ on $\K_1^{\mathrm{M}}$ is trivial. In particular, we have the following corollary.

\begin{corollary}
Let $C_o$ be a smooth curve over an algebraically closed field $k$, let $C$ be its smooth compactification and $D=C-C_o$. Then there exist natural isomorphisms
\[
\pi^{\A^1}_0(C_o)\cong \coh^S_0(C_o) \cong \mathrm{Pic}(C,D).
\]
\end{corollary}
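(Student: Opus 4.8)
The plan is to combine the explicit presentations of the two groups furnished by Theorem~\ref{thm:hom_curve_intr} and Theorem~\ref{thm:suslin-voevodsky}, and to show that over an algebraically closed field the Milnor--Witt Gersten complex computing $\pi^{\A^1}_0(C_o)$ coincides with the Milnor Gersten complex computing $\mathrm{Pic}(C,D)$. The cartesian square defining $\K_n^{\MW}$ yields short exact sequences of sheaves $0 \to I^{n+1} \to \K_n^{\MW} \to \K_n^{\mathrm{M}} \to 0$; for $n=0$ this reads $0\to I \to \GW \to \Z \to 0$ and for $n=1$ it reads $0 \to I^2 \to \K_1^{\MW} \to \K_1^{\mathrm{M}} \to 0$. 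Since the natural transformation $\K_*^{\MW}\to \K_*^{\mathrm{M}}$ commutes with the residue maps of the Rost--Schmid complex, it induces a morphism from the Milnor--Witt Gersten complex to the Milnor Gersten complex, and it suffices to prove that this morphism is an isomorphism at every term.

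First I would establish the relevant vanishing of the fundamental powers. As $k$ is algebraically closed, each closed point $x\in C^{(1)}$ has residue field $k(x)=k$, which is quadratically closed, so $I(k)=0$ and the sequence above gives $\GW(k(x))\cong \Z$. At the generic point, $k(C)$ has transcendence degree one over an algebraically closed field, hence is a $C_1$ field by Tsen's theorem; a nonzero anisotropic form in $I^2(k(C))$ would have dimension at least four by the Arason--Pfister Hauptsatz, contradicting the isotropy of all forms of dimension at least three over a $C_1$ field, so $I^2(k(C))=0$. The sequence for $n=1$ then yields $\K_1^{\MW}(k(C))\cong \K_1^{\mathrm{M}}(k(C))\cong k(C)^*$, the last isomorphism being $\K_1^{\mathrm{M}}\cong \Gm$.

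Next I would dispose of the twist by $\omega_C$. Writing a section of $\K_1^{\MW}(k(C))$ as a compatible pair $(\alpha,\beta)$ with $\beta\in I(k(C))$ under the cartesian square, the action of $a\in k(C)^*$ fixes $\alpha$ and sends $\beta$ to $\langle a\rangle\beta$; but $\langle a\rangle\beta-\beta=(\langle a\rangle-\langle 1\rangle)\beta$ lies in $I\cdot I=I^2(k(C))=0$, so the $\Gm$-action is trivial. The same computation with $I(k)=0$ shows the action on $\GW(k(x))$ is trivial. Hence the twist by $\omega_C$ is trivial at every term of the Gersten complex, exactly as for $\K_1^{\mathrm{M}}\otimes\omega_C\cong \K_1^{\mathrm{M}}$ noted in the introduction, and $(\K_1^{\MW}\otimes\omega_C)_{C,D}$ contributes the same groups as $(\Gm)_{C,D}$.

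Combining these observations, the morphism of Gersten complexes is a termwise isomorphism compatible with the ``trivial over $D$'' conditions, so it identifies the exact sequence of Theorem~\ref{thm:hom_curve_intr} with the Suslin--Voevodsky sequence recalled in the introduction; passing to cokernels gives $\pi^{\A^1}_0(C_o)\cong \mathrm{Pic}(C,D)$, while Theorem~\ref{thm:suslin-voevodsky} identifies the latter with $\coh^S_0(C_o)$. I expect the main obstacle to be the bookkeeping of the twist: one must check that the triviality of the pointwise $\Gm$-actions is compatible with the residue maps and with the relative conditions along $D$, so that one obtains an isomorphism of complexes rather than merely of individual terms. The vanishing $I^2(k(C))=0$ is the decisive input, but once it is available the remaining compatibilities are formal consequences of the functoriality of the Rost--Schmid complex.
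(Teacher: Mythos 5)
Your argument is correct and follows the same route the paper takes (the paper essentially leaves the proof to the remark preceding the corollary): identify the Milnor--Witt Gersten/Rost--Schmid complex of Theorem~\ref{thm:hom_curve_intr} with the Milnor one of \cite[Lemma~2.3]{SV96} over an algebraically closed base. Your writeup usefully supplies the details the paper omits --- $I^2(k(C))=0$ via Tsen and Arason--Pfister, $\GW(k(x))\cong\Z$ from quadratic closedness, and the resulting triviality of the $\Gm$-action and hence of the twist by $\omega_C$ --- but it is not a different proof.
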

This corollary allows one to avoid the technical discussion of transfers in the proof of the following version of rigidity theorem ({cf. \cite[Theorem~1.9]{Ya04} and \cite[Theorem~4.9]{RO08}}), since the pairing $\mathrm{Pic}(C,D) \times A(C_o) \to A(\Spec k)$ is immediate. The claim follows from divisibility of the Picard group as in loc. cit.
\begin{theorem}
Let $C_o$ be a smooth curve over an algebraically closed field $k$ and let $i_0\colon \Spec k \to C_o$ and $i_1\colon \Spec k \to C_o$ be two closed points. Then for $A\in \SH(k)$ such that $nA=0$ for some integer $n$ coprime to $\operatorname{char} k$ one has
\[
i_0^*=i_1^*\colon A(C_o)\to A(\Spec k).
\]
\end{theorem}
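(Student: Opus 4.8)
The plan is to realize $i_0^{*}$ and $i_1^{*}$ as composition with the stable classes of the two points and then to exploit the divisibility furnished by the preceding Corollary. For $\beta\in A(C_o)=\Hom_{\SH(k)}(\Sigma^\infty_\T (C_o)_+,A)$ and a closed point $i_y\colon\Spec k\to C_o$ one has $i_y^{*}\beta=\beta\circ\Sigma^\infty_\T (i_y)_+$, which is exactly the value at $\alpha=[i_y]:=\Sigma^\infty_\T (i_y)_+$ of the pairing
\[
\pi^{\A^1}_0(C_o)\times A(C_o)\to A(\Spec k),\qquad (\alpha,\beta)\mapsto \beta\circ\alpha .
\]
This is precisely the pairing that the paragraph before the theorem calls ``immediate'': it is plain composition of morphisms in $\SH(k)$, so no transfers are needed to define it. Since $\SH(k)$ is additive, for fixed $\beta$ the assignment $\alpha\mapsto\beta\circ\alpha$ is a group homomorphism, and hence it suffices to show that pairing with $[i_0]-[i_1]\in\pi^{\A^1}_0(C_o)$ kills all of $A(C_o)$.

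Next I would transport the difference $[i_0]-[i_1]$ through the isomorphisms $\pi^{\A^1}_0(C_o)\cong\coh^S_0(C_o)\cong\mathrm{Pic}(C,D)$ of the Corollary. On the $\coh^S_0(C_o)$ side, viewed as zero-cycles modulo relative rational equivalence, the normalization $\Theta(\langle 1\rangle_y)=\Sigma^\infty_\T (i_y)_+$ from Theorem~\ref{thm:hom_curve_intr} identifies $[i_y]$ with the point-class of $y$, so $[i_0]-[i_1]$ is a degree-zero cycle and its image lies in the degree-zero subgroup $\mathrm{Pic}^0(C,D)$. The latter is the group of $k$-points of the generalized Jacobian of $C$ with modulus $D$, a semi-abelian variety over the algebraically closed field $k$; such groups of points are divisible. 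In particular $\mathrm{Pic}^0(C,D)$ is $n$-divisible for every $n$ prime to $\operatorname{char} k$, so I may write $[i_0]-[i_1]=n\gamma$ for some $\gamma\in\pi^{\A^1}_0(C_o)$.

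To conclude, I would use additivity of the pairing in the first variable together with the torsion hypothesis. Bilinearity of composition gives
\[
i_0^{*}\beta-i_1^{*}\beta=\beta\circ\bigl([i_0]-[i_1]\bigr)=\beta\circ(n\gamma)=n\,(\beta\circ\gamma).
\]
Since $nA=0$ means $n\cdot\id_A=0$ in $\SH(k)$, every morphism with target $A$ is $n$-torsion, so the element $\beta\circ\gamma\in A(\Spec k)$ satisfies $n\,(\beta\circ\gamma)=0$. Therefore $i_0^{*}\beta=i_1^{*}\beta$ for all $\beta$, which is the assertion $i_0^{*}=i_1^{*}$.

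I expect the only real obstacle to be the geometric input of the second paragraph: verifying that the isomorphism of the Corollary carries the stable class of a rational point to the corresponding point-class on the $\coh^S_0$/relative-Picard side, so that degree is preserved and $[i_0]-[i_1]$ genuinely lands in the divisible subgroup, and then invoking divisibility of the semi-abelian variety of $k$-points (the prime-to-$\operatorname{char} k$ part being all that the torsion hypothesis requires). The homotopy-theoretic bookkeeping — additivity of composition and the $n$-torsion of $A(\Spec k)$ — is routine once the pairing has been set up as bare composition in $\SH(k)$.
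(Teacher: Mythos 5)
Your proof is correct and follows essentially the same route the paper indicates: the paper itself only sketches the argument, noting that the pairing $\mathrm{Pic}(C,D)\times A(C_o)\to A(\Spec k)$ is plain composition in $\SH(k)$ and that "the claim follows from divisibility of the Picard group as in loc.\ cit.", which is exactly the divisibility-of-the-generalized-Jacobian argument you spell out. Your use of the normalization $\Theta(\langle 1\rangle_y)=\Sigma^\infty_\T (i_y)_+$ to place $[i_0]-[i_1]$ in the degree-zero (hence $n$-divisible) part is the right way to fill in the detail the paper leaves implicit.
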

Moreover, it is well-known that the above theorem yields the following one via an argument due to A.~Suslin \cite[the proof of Main~Theorem]{Sus83}.
\begin{theorem}[{cf. \cite[Theorem~1.10]{Ya04} and \cite[Theorem~4.10]{RO08}}]
Let $K/k$ be an extension of algebraically closed fields, let $X$ be a smooth variety over $k$ and let $A\in \SH(k)$ be such that $nA=0$ for some integer $n$ coprime to $\operatorname{char} k$. Then the map
\[
A(X) \to A(X\times_{\Spec k}\Spec K)
\]
is an isomorphism.
\end{theorem}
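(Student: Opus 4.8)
The plan is to follow the reduction of A.~Suslin, taking the preceding two-point rigidity theorem on curves as the only geometric input. Throughout I write $A(Y)=\Hom_{\SH(k)}(\Sigma^\infty_\T Y_+,A)$ for $Y\in\Sm_k$, and I use that $\SH(-)$ is continuous, so that for the base change $A_K=p^*A\in\SH(K)$ along $p\colon\Spec K\to\Spec k$ there are natural identifications
\[
A(X\times_{\Spec k}\Spec K)=\Hom_{\SH(K)}(\Sigma^\infty_\T (X_K)_+,A_K)=\colim_V A(X\times_{\Spec k} V),
\]
where $V$ runs over the smooth affine integral $k$-varieties equipped with a $k$-morphism $\xi\colon\Spec K\to V$ (equivalently, over smooth models of the finitely generated subextensions $k\subseteq k(V)\subseteq K$), the transition maps being restriction to nonempty open subvarieties. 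Since $k$ is algebraically closed each such $V$ is geometrically integral and has a rational point $v_0\in V(k)$; write $pr\colon X\times_{\Spec k} V\to X$ for the projection and note $pr\circ(\id_X\times v_0)=\id_X$. Finally, $nA=0$ forces $nA_K=0$, and $\operatorname{char}K=\operatorname{char}k$ is coprime to $n$, so the preceding rigidity theorem is available over $K$.

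Injectivity is formal. If $\alpha\in A(X)$ dies in $A(X_K)$, then by continuity $pr^*\alpha=0$ in $A(X\times_{\Spec k} V)$ for some model $V$; applying $(\id_X\times v_0)^*$ and using $pr\circ(\id_X\times v_0)=\id_X$ gives $\alpha=(\id_X\times v_0)^*pr^*\alpha=0$. This step uses neither the torsion hypothesis nor rigidity.

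For surjectivity I represent a class $\beta\in A(X_K)$ by some $\beta_V\in A(X\times_{\Spec k} V)$ with associated $K$-point $\xi\colon\Spec K\to V$, and set $\gamma=(\id_X\times v_0)^*\beta_V\in A(X)$. A direct diagram chase identifies the image of $\gamma$ in $A(X_K)$ with $v_{0,K}^*(\beta_V)_K$ and $\beta$ itself with $\xi^*(\beta_V)_K$, where $(\beta_V)_K\in A_K((X\times V)_K)$ and $\xi,v_{0,K}\colon\Spec K\to V_K$ are the two $K$-points induced by $\xi$ and by $v_0$. Thus it suffices to prove the rigidity identity $\xi^*(\beta_V)_K=v_{0,K}^*(\beta_V)_K$. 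To bring this into the shape of the curve theorem I would absorb the factor $X_K$ into the coefficients: put $A'=\iHom(\Sigma^\infty_\T (X_K)_+,A_K)\in\SH(K)$, so that $A'(Y)=A_K(X_K\times_{\Spec K} Y)$ for $Y\in\Sm_K$ and $nA'=0$; then $(\beta_V)_K\in A'(V_K)$ and the two pullbacks are $\xi^*,v_{0,K}^*\colon A'(V_K)\to A'(\Spec K)=A_K(X_K)=A(X_K)$. As $V_K$ is a smooth connected affine $K$-variety, I would join the two $K$-points by an irreducible curve (obtained by intersecting with a general linear subspace through both points and normalising), getting a smooth curve $f\colon C\to V_K$ with $K$-points $c_0,c_1$ mapping to $v_{0,K}$ and $\xi$, and factor the two pullbacks as $c_i^*\circ f^*$. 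The preceding theorem, applied to $A'\in\SH(K)$ over the algebraically closed field $K$, gives $c_0^*=c_1^*\colon A'(C)\to A'(\Spec K)$, whence $\xi^*(\beta_V)_K=v_{0,K}^*(\beta_V)_K$ and $\beta$ lies in the image of $A(X)$.

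The main obstacle is the surjectivity step, and within it the passage from the curve theorem to the two-point statement for the higher-dimensional $V_K$. The two essential moves are (i) replacing $A_K$ by the internal-hom spectrum $A'$, which reduces the parametrised statement over $X_K$ to a plain rigidity statement over $K$, and (ii) connecting the two $K$-points of $V_K$ by a smooth curve so that the preceding theorem applies verbatim. The remaining bookkeeping — verifying continuity in the form stated above, the compatibility of base change with $\iHom$, and the diagram chase identifying the image of $\gamma$ — is routine, but it must be carried out carefully so as to keep track of the several base changes between $k$- and $K$-schemes.
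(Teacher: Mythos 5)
Your proposal is correct and takes exactly the route the paper itself takes: the paper offers no independent proof but simply invokes the reduction to the preceding two-point rigidity theorem ``via an argument due to A.~Suslin'', and your write-up --- continuity of $\SH$ to express $A(X\times_{\Spec k}\Spec K)$ as a colimit over models, retraction onto a rational point for injectivity, and for surjectivity the absorption of the factor $X_K$ into the coefficient spectrum via $\iHom$ followed by joining the two $K$-points of $V_K$ by a smooth affine curve so that the curve theorem applies --- is precisely that argument spelled out. I see no gaps.
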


The paper is organized in the following way. The reader is assumed to be familiar with the basic notions of stable motivic homotopy theory (see \cite{V98,MV99,Mor04}). In Section~2 we recall some properties of stable motivic homotopy sheaves and study the orientation phenomena, i.e. relate the values of stable motivic homotopy sheaves on Thom spaces of various vector bundles. In Section~3 we organize well-known facts about the coniveau spectral sequence and show that the Rost-Schmid complex computes Nisnevich and Zariski cohomology of stable motivic homotopy sheaves. In Section~4 we derive a variation of the Rost-Schmid complex for a pair $(C,D)$ with $C$ being a smooth curve and $D$ a finite collection of closed points. In Section~5 we adopt the approach of Asok--Haesemeyer applying Atiyah duality in order to obtain a cohomological description of the zeroth stable homotopy group. Then we apply the developed technique deriving an explicit description for the zeroth stable motivic homotopy group of $\Gm$-suspensions of a smooth projective variety and of a smooth curve. In the Appendix we provide some details on the theory of presheaves with framed transfers (see \cite{GP14,GP15}) that are used in the present paper.

\begin{notation}
We adopt the following notation and conventions.
\begin{itemize}
	\item
	$k$ is an infinite perfect field.
	\item
	$\Sm_k$ is the category of smooth varieties over $k$.
	\item
	$\Sm_k/X$ is the category of regular morphisms $Y\to X$ with $Y$ and $X$ being smooth varieties, i.e. $\Sm_k/X$ is the slice category over $X$. Note that this category is different from the category $\Sm_X$ of smooth schemes over $X$.
	\item
	$\SH(k)$ is the stable motivic homotopy category over $k$ (see \cite{V98,MV99,Mor04}).
	\item
	$[A,B]=\Hom_{\SH(k)}(A,B)$ for $A,B\in \SH(k)$.
	\item
	$\T=\A^1/(\A^1-0)$, $\Gm=(\A^{1}-0,1)$. 
	\item
	$\Th(E)=E/(E-z(X))$ is the Thom space of a vector bundle $E$ over a smooth variety $X$ with the zero section $z\colon X\to E$.
	\item
	$A(l)=\Sigma^l_{\Gm} A$ and $A[i]=\Sigma^i_{\Sph^1} A$ for $A\in \SH(k)$.
	\item
	$[X,A]=[\Sigma_\T^\infty X_+,A]$ for $X\in \Sm_k$ and $A\in\SH(k)$.
	\item
	$[P,A]=[\Sigma_\T^\infty P,A]$ for a pointed Nisnevich sheaf $P$ on $\Sm_k$ and $A\in\SH(k)$. In particular, we write
	\[
	[X/Y,A]=[\Sigma_\T^\infty (X/Y),A]
	\]
	for smooth varieties $Y\subset X$. Here the varieties are treated as representable Nisnevich sheaves via the Yoneda embedding.
	\item
	An essentially smooth scheme is a noetherian scheme which is the inverse limit of a filtering system with each transition morphism being an etale affine morphism of smooth varieties. For a presheaf on the category of smooth varieties we extend it to the category of essentially smooth schemes by the respective colimit.
\end{itemize}
\end{notation}

\section{Homotopy sheaves}
In this section we recall the notions of homotopy sheaves and presheaves of spectra and study the orientation phenomena for these (pre-)sheaves relating its values on Thom spaces of vector bundles.

\begin{definition}
For $A\in \SH(k)$ and $i,n\in \Z$ let $\pi_{i}^{\A^1}(A)_n(-)$ be the \textit{homotopy presheaf} of $A$ that is given by
\[
\pi_{i}^{\A^1}(A)_n(U)=[U[i],A(n)]
\]
for $U\in \Sm_k$. The associated Zariski sheaf is denoted $\underline{\pi}_{i}^{\A^1}(A)_n$ and referred to as a \textit{homotopy sheaf} of $A$. Note that for a smooth variety $X$ the isomorphism $\T\cong \Sph^1\wedge \Gm$ \cite[Lemma~2.15]{MV99} identifies
\[
[ \Sigma^n_\T X_+,A]\cong \pi_{n}^{\A^1}(A)_{-n}(X).
\]
\end{definition}

\begin{remark}
It follows from the general theory of presheaves with framed transfers developed in \cite{GP15} that $\underline{\pi}_{i}^{\A^1}(A)_n$ are Nisnevich sheaves (see Definition~\ref{def:homotopy_transfers} and Lemma~\ref{lm:ZarNis} in the current paper). Thus the given above definition of $\underline{\pi}_{i}^{\A^1}(A)_n$ coincides with the usual one, where Nisnevich sheafification is considered.
\end{remark}

\begin{definition}
Let $E$ be a rank $r$ vector bundle over a smooth variety $X$. For $A\in \SH(k)$ let $\pi_{i}^{\A^1}(A)_n(-;E)$ be the presheaf on $\Sm_k/X$ given by
\[
\pi_{i}^{\A^1}(A)_n(Y;E) = [\Th(f^*E)[i-r],A(n+r)]
\]
with $f\colon Y\to X$ being the structure map. We denote $\underline{\pi}_{i}^{\A^1}(A)_n(-;E)$ the associated Zariski sheaf on $\Sm_k/X$.
\end{definition}

\begin{remark}
The shift of the indices in the above definition guarantees that a trivialization $E\cong \triv_X^r$ induces an isomorphism of presheaves on $\Sm_k/X$
\[
\pi_{i}^{\A^1}(A)_n(-;E)\cong \pi_{i}^{\A^1}(A)_n(-).
\]
\end{remark}

\begin{definition} \label{def:Gm_action}
Let $E$ be a rank $r$ vector bundle over a smooth variety $X$ and $A\in\SH(k)$. For a regular morphism of smooth varieties $f\colon Y\to X$ we have a right action of the group of vector bundle automorphisms $\GL(f^*E)$ on $\pi_{i}^{\A^1}(A)_n(Y;E)=[\Th(f^*E)[i-r],A(n+r)]$ induced by the left action on the Thom space $\Th(f^*E)$.

In particular, we have a right action of $\GL_r(k[X])$ on $\pi_{i}^{\A^1}(A)_n(X)$ given by the identification
\[
\pi_{i}^{\A^1}(A)_n(X)\cong \pi_{i}^{\A^1}(A)_n(X;\triv_X^r)
\]
combined with the left action of $\GL_r(k[X])$ on $\triv_X^r$. For $\alpha\in \pi_{i}^{\A^1}(A)_n(X),\, g\in \GL_r(k[X])$ we denote this action $\alpha\cdot g$. 
\end{definition}

\begin{lemma} \label{lm:GL_action}
Let $X$ be a smooth variety and $x\in X$ be a (not necessarily closed) point. Then for every $A\in\SH(k)$ we have
\begin{enumerate}
	\item
	$\alpha\cdot g=\alpha \cdot \det g$ for $\alpha\in \pi_{i}^{\A^1}(A)_n(\Spec \struct_{X,x}),\, g\in \GL_r(\struct_{X,x})$.
	\item
	$\alpha\cdot a= \alpha\cdot a^{-1}$ for $\alpha\in \pi_{i}^{\A^1}(A)_n(X),\, a\in k[X]^*$.
\end{enumerate}
\end{lemma}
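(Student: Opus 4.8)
The plan is to derive both statements from a single mechanism: $\A^1$-homotopic automorphisms of a vector bundle induce the same endomorphism of the homotopy presheaf, since they induce $\A^1$-homotopic self-maps of the associated Thom space and hence equal morphisms in $\SH(k)$. The key instance is that elementary matrices act trivially. Writing $e_{ij}(\lambda)=I_r+\lambda E_{ij}$ for $i\neq j$ and $\lambda$ a section of $\struct_X$, the assignment $(t,v)\mapsto e_{ij}(t\lambda)v$ is an $\A^1$-family of automorphisms of $\triv^r$ joining $\id$ to $e_{ij}(\lambda)$, so $\alpha\cdot e_{ij}(\lambda)=\alpha$. First I would record this, writing $E_r$ for the subgroup generated by the elementary matrices, together with the compatibility of the rank-one action with the corner inclusion $\GL_1\hookrightarrow\GL_r$ (which follows from the stabilization identification $\pi_{i}^{\A^1}(A)_n(X)\cong\pi_{i}^{\A^1}(A)_n(X;\triv_X^r)$), so that $\alpha\cdot\operatorname{diag}(u,1,\dots,1)=\alpha\cdot u$ for $u\in\struct_X(X)^{*}$.

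For (1) I would invoke the structure of $\GL_r$ over the local ring $\struct_{X,x}$: since $\SL_r(\struct_{X,x})=E_r(\struct_{X,x})$, any $g\in\GL_r(\struct_{X,x})$ factors as $g=\operatorname{diag}(\det g,1,\dots,1)\cdot e$ with $e\in E_r(\struct_{X,x})$, because $\operatorname{diag}(\det g,1,\dots,1)^{-1}g\in\SL_r$. Using that $e$ acts trivially and the corner compatibility, $\alpha\cdot g=\alpha\cdot\operatorname{diag}(\det g,1,\dots,1)=\alpha\cdot\det g$, which is exactly (1).

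For (2) I would first observe that the particular matrix $\operatorname{diag}(a,a^{-1})$ lies in $E_2(k[X])$ over \emph{any} ring, by Whitehead's lemma: with $w(b)=e_{12}(b)e_{21}(-b^{-1})e_{12}(b)$ one has
\[
\operatorname{diag}(a,a^{-1})=w(a)\,w(-1),
\]
so it acts trivially. This alone, however, only records the automatic identity $(\alpha\cdot a)\cdot a^{-1}=\alpha$ and does \emph{not} give $\alpha\cdot a=\alpha\cdot a^{-1}$. The genuine content is that the right action of $a$ is precomposition with the self-map $m_a$ of $\Th(\triv_X)=\Sigma_\T X_+$ induced by multiplication by $a$ on $\A^1$, and that, through the canonical $\GW$-module structure on $\pi_{i}^{\A^1}(A)_n$ coming from $\underline{\pi}_0^{\A^1}(\SSp)_0\cong\GW$, this self-map acts as the element $\langle a\rangle$ (Morel \cite{Mor12}). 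I would then conclude with the elementary identity $\langle a^{-1}\rangle=\langle a\cdot(a^{-1})^{2}\rangle=\langle a\rangle$ in $\GW(k[X])$ (equivalently $\langle a\rangle^{2}=\langle a^{2}\rangle=\langle 1\rangle$), whence $\alpha\cdot a=\langle a\rangle\cdot\alpha=\langle a^{-1}\rangle\cdot\alpha=\alpha\cdot a^{-1}$.

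The main obstacle is exactly this last input. The automorphism/elementary-matrix formalism sees an automorphism of $\triv^r$ only through its determinant, so it produces a well-defined action of $\struct_X(X)^{*}$ but cannot impose any relation beyond those visible to $\det$; in particular it cannot detect that squares act trivially, and both parts are formally consistent with $\langle a\rangle$ and $\langle a^{-1}\rangle$ being distinct. Thus the crux of (2) is the fact that $m_{a^{2}}$ is stably null-homotopic, i.e.\ $\langle a\rangle^{2}=1$ in $\GW$, which is genuine input from the ring structure of $\pi_0$ of the sphere rather than a formal consequence of $\A^1$-homotopies of bundle automorphisms; I would therefore isolate and cite this, the rest being routine.
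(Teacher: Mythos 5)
Your proof of part (1) follows essentially the same route as the paper's: the paper also reduces to the fact that over a local ring every determinant-one matrix is a product of elementary transvections and that a transvection acts $\A^1$-homotopically trivially (for which it cites \cite[Lemma~1]{An16} -- the explicit homotopy $(t,v)\mapsto e_{ij}(t\lambda)v$ you write down is precisely that fact), the determinant then being peeled off through the corner inclusion. For part (2) the comparison is more interesting, because the paper gives no argument at all and simply cites \cite[Lemma~5]{An16}. You correctly diagnose that the Whitehead-lemma factorization of $\operatorname{diag}(a,a^{-1})$ only yields the tautology $(\alpha\cdot a)\cdot a^{-1}=\alpha$, and that the genuine content is the triviality of the action of $a^{2}$; this is exactly the point where a careless argument would go wrong, so it is good that you isolated it. Your resolution -- identifying precomposition with $m_a$ with multiplication by $\langle a\rangle=1+\eta[a]$ via Morel's computation of $\underline{\pi}_0^{\A^1}(\SSp)_0$ (Theorem~\ref{thm:Morel} of the paper) and then using $\langle ab^2\rangle=\langle a\rangle$ -- is valid and not circular, since Morel's theorem does not depend on this lemma and the paper invokes it later anyway. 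The trade-off is that you import a substantially heavier input than the one the paper points to: the cited \cite[Lemma~5]{An16} obtains $m_a\simeq m_{a^{-1}}$ by an elementary geometric manipulation of Thom spaces, independent of the computation of the zeroth stable homotopy sheaf of the sphere, and hence keeps the lemma at the level of formal $\A^1$-homotopies of bundle automorphisms. In short: part (1) is the paper's argument; part (2) is a correct but less economical alternative to the reference the paper relies on.
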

\begin{proof}
For the first claim recall that over a local ring every matrix of determinant one is a product of elementary transvections. Then the claim follows from the well-known fact that the action given by such matrix is homotopy trivial (see, for example, \cite[Lemma~1]{An16}).

The second claim follows from \cite[Lemma~5]{An16}.
\end{proof}

\begin{definition} \label{def:twist}
Let $L$ be a line bundle over a smooth variety $X$. For $A\in\SH(k)$ denote by $\pi_{i}^{\A^1}(A)_n\otimes L$ the presheaf of abelian groups on $\Sm_k/X$ given by 
\[
(\pi_{i}^{\A^1}(A)_n\otimes L) (Y)= \pi_{i}^{\A^1}(A)_n(Y)\otimes_{\Z[k[Y]^*]} \Z[\Gamma(Y,f^*L)^0].
\]
Here $f\colon Y\to X$ is the structure map, $\Gamma(Y,f^*L)^0$ is the set of nowhere vanishing global sections of $f^*L$ (i.e. the set of trivializations of $f^*L$) and the action of $\GL_1(k[Y])=k[Y]^*$ on $\pi_{i}^{\A^1}(A)_n(Y)$ is given in Definition~\ref{def:Gm_action}. We denote by $\underline{\pi}_{i}^{\A^1}(A)_n\otimes L$ the associated Zariski sheaf on $\Sm_k/X$.
\end{definition}

\begin{remark} \label{rm:SLc_orientation}
It follows from Lemma~\ref{lm:GL_action} that $\pi_{i}^{\A^1}(A)_n\otimes L\cong \pi_{i}^{\A^1}(A)_n\otimes L^{-1}$, whence
\[
\pi_{i}^{\A^1}(A)_n\otimes (L'\otimes L^{\otimes 2})\cong \pi_{i}^{\A^1}(A)_n\otimes L',\quad
\underline{\pi}_{i}^{\A^1}(A)_n\otimes (L'\otimes L^{\otimes 2})\cong \underline{\pi}_{i}^{\A^1}(A)_n\otimes L'
\]
for line bundles $L,L'$ over a smooth variety $X$. 
\end{remark}

\begin{lemma}\label{lm:SL_oriented}
Let $E$ be a vector bundle over a smooth variety $X$. Then for every $A\in\SH(k)$ there exists a canonical isomorphism of sheaves 
\[
\underline{\pi}_{i}^{\A^1}(A)_n(-;E)\cong \underline{\pi}_{i}^{\A^1}(A)_n\otimes \det E.
\]
In particular, $\underline{\pi}_{i}^{\A^1}(A)_n(-;L)\cong \underline{\pi}_{i}^{\A^1}(A)_n\otimes L$ for a line bundle $L$ over $X\in\Sm_k$.
\end{lemma}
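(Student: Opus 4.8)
The plan is to construct the isomorphism from local trivializations of $E$ and to let the determinant twist on the right-hand side absorb the ambiguity in the choice of trivialization. Both $\underline{\pi}_{i}^{\A^1}(A)_n(-;E)$ and $\underline{\pi}_{i}^{\A^1}(A)_n\otimes\det E$ are Zariski sheaves on $\Sm_k/X$, so it suffices to produce a morphism of presheaves that is an isomorphism on stalks; since $E$ is Zariski-locally trivial, on a local scheme $\Spec\struct_{Y,y}$ the bundle $f^*E$ admits a frame, and this is exactly the regime in which Lemma~\ref{lm:GL_action} applies.

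First I would fix $f\colon Y\to X$ in $\Sm_k/X$ and note that a frame (trivialization) $\tau$ of $f^*E$ induces an isomorphism of Thom spaces $\Th(\triv_Y^r)\cong\Th(f^*E)$ and hence, by the index shift built into the definition together with contravariance, an isomorphism
\[
\Psi_\tau\colon \pi_{i}^{\A^1}(A)_n(Y;E)\xrightarrow{\ \sim\ }\pi_{i}^{\A^1}(A)_n(Y).
\]
A second frame is of the form $\tau'=g\tau$ for a unique $g\in\GL_r(k[Y])$, and unwinding the definition of the right $\GL_r$-action on the Thom space from Definition~\ref{def:Gm_action} gives $\Psi_{\tau'}(\beta)=\Psi_\tau(\beta)\cdot g$. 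Simultaneously the induced trivialization $\det\tau$ of $f^*\det E$ rescales by the unit $\det g$, namely $\det\tau'=(\det g)\cdot\det\tau$.

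Next I would define the comparison morphism by sending $\beta\in\pi_{i}^{\A^1}(A)_n(Y;E)$ locally to $\Psi_\tau(\beta)\otimes[\det\tau]$ in $\pi_{i}^{\A^1}(A)_n(Y)\otimes_{\Z[k[Y]^*]}\Z[\Gamma(Y,f^*\det E)^0]$, where $[\,\cdot\,]$ denotes the basis element attached to a trivialization. The heart of the argument is that this is independent of $\tau$ over a local scheme: replacing $\tau$ by $\tau'=g\tau$ and using Lemma~\ref{lm:GL_action}(1) to rewrite $\Psi_\tau(\beta)\cdot g=\Psi_\tau(\beta)\cdot\det g$, one computes
\[
\Psi_{\tau'}(\beta)\otimes[\det\tau']=(\Psi_\tau(\beta)\cdot\det g)\otimes[(\det g)\cdot\det\tau]=\Psi_\tau(\beta)\otimes[(\det g)^2\cdot\det\tau]=\Psi_\tau(\beta)\otimes[\det\tau],
\]
where the middle equality is the defining tensor relation over $\Z[k[Y]^*]$ and the last equality follows from Lemma~\ref{lm:GL_action}(2), which forces the action of a square to be trivial. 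Since every trivialization of $f^*\det E$ has the form $a\cdot\det\tau$ for some $a\in k[Y]^*$ and $\Psi_\tau$ is bijective, the resulting map is an isomorphism on stalks.

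Finally I would verify that these locally defined maps are compatible with restriction along morphisms in $\Sm_k/X$, so that they glue to a morphism of the associated Zariski sheaves, necessarily an isomorphism since it is one on every stalk; the canonicity is precisely the content of the determinant having absorbed the frame ambiguity. The line bundle statement is the instance $r=1$, where $\det E=L$. The main obstacle is the well-definedness computation above: one must match the $\GL_r$-action on the Thom space exactly with the $k[Y]^*$-action defining the twist, and it is here that both parts of Lemma~\ref{lm:GL_action} are indispensable—part (1) collapses the frame ambiguity to its determinant while part (2) removes the residual square—which is also the reason the identification is only canonical after sheafification rather than at the presheaf level.
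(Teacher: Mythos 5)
Your proposal is correct and follows essentially the same route as the paper: the paper packages the comparison as a span through the auxiliary presheaf $\pi_{i}^{\A^1}(A)_n(-;\triv^r_X)\otimes_{\Z[\GL_r(-)]}\Z[\mathcal{I}so((-)^*E,\triv^r_{(-)})]$ and checks stalkwise that both legs are isomorphisms, which is exactly your local well-definedness computation (collapse the frame ambiguity to $\det g$ by Lemma~\ref{lm:GL_action}(1), then kill the resulting square by Lemma~\ref{lm:GL_action}(2)) in slightly different clothing. The only cosmetic difference is that the paper uses $(\det\theta)^{-1}$ where you use $\det\tau$, which is immaterial by Lemma~\ref{lm:GL_action}(2).
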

\begin{proof}
Put $r=\rank E$. For a regular morphism of smooth varieties $f\colon Y\to X$ consider the following homomorphisms of abelian groups.
\[
\xymatrix @C=0pt {
	& \ar[dl]_(0.6){\phi_Y} \pi_{i}^{\A^1}(A)_n(Y;\triv^r_X)\times \Z[\mathcal{I}so(f^*E,\triv_Y^r)] \ar[dr]^(0.6){\psi_Y} & \\
	\pi_{i}^{\A^1}(A)_n(Y;E) & & \pi_{i}^{\A^1}(A)_n(Y)\otimes_{\Z[k[Y]^*]} \Z[\Gamma(Y;\det f^*E)^0]
}
\]
Here 
\begin{itemize}
	\item
	$\mathcal{I}so(f^*E,\triv_Y^r)$ is the set of vector bundle isomorphisms $f^*E\cong \triv_Y^r$ (the set of trivializations of $f^*E$). 
	\item
	For $\alpha\in \pi_{i}^{\A^1}(A)_n(Y;\triv^r_X)$, $\theta\in \mathcal{I}so(f^*E,\triv_Y^r)$ we put $\phi_Y(\alpha,\theta)=\theta^*(\alpha)$ for the isomorphism 
	\[
	\theta^*\colon \pi_{i}^{\A^1}(A)_n(Y;\triv_X^r)\xrightarrow{\simeq} \pi_{i}^{\A^1}(A)_n(Y;E)
	\]
	induced by the corresponding isomorphism of Thom spaces $\Th(E) \xrightarrow{\simeq} \Th(\triv_Y^r) = \Th(f^*\triv_X^r)$.
	\item
	For $\alpha\in \pi_{i}^{\A^1}(A)_n(Y;\triv^r_X)$, $\theta\in \mathcal{I}so(f^*E,\triv_Y^r)$ we put $\psi_Y(a,\theta)=a\otimes(\det \theta)^{-1}$. Here we identify $\pi_{i}^{\A^1}(A)_n(Y;\triv^r_X)\cong \pi_{i}^{\A^1}(A)_n(Y)$.
\end{itemize}
These homomorphisms induce morphisms of presheaves of abelian groups on $\Sm_k/X$,
\[
\pi_{i}^{\A^1}(A)_n(-;E) \xleftarrow{\phi}\pi_{i}^{\A^1}(A)_n(-;\triv^r_X)\times \Z[\mathcal{I}so((-)^*E,\triv_{(-)}^r)] \xrightarrow{\psi} \pi_{i}^{\A^1}(A)_n\otimes \det E.
\]
Note that $\mathcal{I}so(f^*E,\triv_Y^r)$ has a canonical left action of $\GL_r(k[Y])$, while 
\[
\pi_{i}^{\A^1}(A)_n(Y;\triv_X^r)=\pi_{i}^{\A^1}(A)_n(Y;f^*\triv_X^r)=\pi_{i}^{\A^1}(A)_n(Y;\triv_Y^r)
\]
has a right action of $\GL_r(k[Y])$ given in Definition~\ref{def:Gm_action}. One easily checks that $\phi_Y(\alpha\cdot g,\theta)=\phi_Y(\alpha,g\theta)$ for $g\in \GL_r(k[Y])$. Moreover, Lemma~\ref{lm:GL_action} yields that for $g\in \GL_r(k[Y])$ the stalks of 
\[
\psi_Y(\alpha\cdot g, \theta)=(\alpha\cdot g)\otimes (\det \theta)^{-1}\]
and 
\[
\psi_Y(\alpha, g\theta)=\alpha\otimes (\det \theta)^{-1} (\det g)^{-1}=\alpha\cdot (\det g)^{-1}\otimes (\det \theta)^{-1}
\]
are equal. Thus we have the induced morphisms of sheaves of abelian groups on $\Sm_k/X$
\[
\underline{\pi}_{i}^{\A^1}(A)_n(-;E) \xleftarrow{\phi} \underline{\pi}_{i}^{\A^1}(A)_n(-;\triv^r_X)\otimes_{\Z[\GL_r(-)]} \Z[\mathcal{I}so((-)^*E,\triv_{(-)}^r)] \xrightarrow{\psi} \underline{\pi}_{i}^{\A^1}(A)_n\otimes \det E.
\]
One easily sees that these morphisms are stalk-wise isomorphisms, whence the claim.
\end{proof}

\begin{definition} \label{def:t-structure}
Recall \cite[Section 5.2]{Mor04} that the \textit{homotopy $t$-structure} on $\SH(k)$ is given by the following full subcategories:
\begin{gather*}
\SH(k)_{t\ge 0}=\{A\in \SH(k)\,|\, \underline{\pi}_{i}^{\A^1}(A)_n=0\, \text{for $i<0$, $n\in \Z$}\},\\
\SH(k)_{t\le 0}=\{A\in \SH(k)\,|\, \underline{\pi}_{i}^{\A^1}(A)_n=0\, \text{for $i>0$, $n\in \Z$}\}.
\end{gather*}
We denote the \textit{heart} of the homotopy $t$-structure as
\[
\heart=\SH(k)_{t\ge 0}\cap \SH(k)_{t\le 0}.
\]
For $l\in\Z$ we have an autoequivalence on $\heart$ given by
\[
M\mapsto M(l)=\Sigma_\Gm^l M.
\]

Put $\SH(k)_{t\ge n}=\SH(k)_{t\ge 0}[n],\, \SH(k)_{t\le n}=\SH(k)_{t\le 0}[n]$. Then for every $A\in \SH(k)$ there exists a canonical filtration
\[
\hdots \to A_{t\ge n} \to A_{t\ge n-1} \to \hdots \to A_{t\ge 1} \to A_{t\ge 0} \to A_{t\ge -1 }\to \hdots \to A
\]
with $A_{t\ge n}\in \SH(k)_{t\ge n}$. Denote 
\[
\Ht^\mathrm{t}_n(A)=\Cone (A_{t\ge n+1}\to A_{t\ge n})[-n]\in \heart.
\]
\end{definition}

\begin{notation}
Abusing the notation, for $M\in\heart$ we put 
\[
M_n(-;E)= \underline{\pi}^{\A^1}_0(M)_n (-;E),\quad M_n\otimes L= \underline{\pi}^{\A^1}_0(M)_n\otimes L \cong \underline{\pi}^{\A^1}_0(M)_n(-;L).
\]
\end{notation}

\section{Coniveau spectral sequence and Rost-Schmid complex}

In this section we recall the constructions of the coniveau spectral sequence and the Rost-Schmid complex (cf. \cite[Chapter~5]{Mor12}) and show that the Rost-Schmid complex allows one to compute sheaf cohomology of homotopy sheaves.

\begin{notation}
For a smooth variety $X$ and a smooth subvariety $Z\subset X$ denote
\[
\Lambda_Z^X=\det N_{Z/X}
\]
the determinant of the normal bundle of $Z$ in $X$.
\end{notation}

\begin{definition} \label{def:coniveau}
Let $X$ be a smooth variety of dimension $d$. Consider a sequence of open subsets
\[
\emptyset=U_\alpha^{(0)}\subset U_\alpha^{(1)}\subset \hdots \subset U_\alpha^{(d)}\subset U_\alpha^{(d+1)} = X
\]
satisfying 
\begin{enumerate}
	\item
	$Z_\alpha^{(p)}=U_\alpha^{(p+1)}-U_\alpha^{(p)}$ is smooth and equidimensional,
	\item
	$\dim Z_\alpha^{(p)} \le d - p$.
\end{enumerate}
Then for every $A\in \SH(k)$ there are long exact sequences 
\[
\hdots \to [U_\alpha^{(p+1)}/U_\alpha^{(p)}, A[m]] \to [U_\alpha^{(p+1)}, A[m]] \to  [U_\alpha^{(p)}, A[m]] \to [U_\alpha^{(p+1)}/U_\alpha^{(p)}, A[m+1]] \to \hdots
\]
Identify $U_\alpha^{(p+1)}/U_\alpha^{(p)}\cong \Th(N_{Z_\alpha^{(p)}/X})$ by the homotopy purity theorem \cite[Theorem 2.23]{MV99}. Then the long exact sequences could be rewritten as
\[
\hdots \to [\Th(N_{Z_\alpha^{(p)}/X}), A[m]] \to [U_\alpha^{(p+1)}, A[m]] \to [U_\alpha^{(p)}, A[m]] \to [\Th(N_{Z_\alpha^{(p)}/X}), A[m+1]] \to \hdots
\]
Rewriting $[\Th(N_{Z_\alpha^{(p)}/X}), A[m]]=\pi^{\A^1}_{p-m}(A)_{-p}(Z_\alpha^{(p)};N_{Z_\alpha^{(p)}/X})$, taking the colimit over all the considered sequences of open subsets and applying Lemma~\ref{lm:SL_oriented} we obtain
\begin{multline*}
\hdots \to \bigoplus_{x\in X^{(p)}} \underline{\pi}_{p-m}^{\A^1}(A)_{-p}(x; \Lambda_x^X) \to \colim_{\alpha} [U_\alpha^{(p+1)}, A[m]] \to \\
\to \colim_\alpha  [U_\alpha^{(p)}, A[m]] \to \bigoplus_{x\in X^{(p)}} \underline{\pi}_{p-m-1}^{\A^1}(A)_{-p}(x; \Lambda_x^X) \to \hdots
\end{multline*}
Here the sum is taken over the set of points of codimension $p$. These long exact sequences give rise to a \textit{coniveau spectral sequence}
\[
E_1^{p,q}= \bigoplus_{x\in X^{(p)}} \pi^{\A^1}_{-q}(A)_{-p}(x; \Lambda_x^X) \Rightarrow [X, A[p+q]].
\]
Let $U$ be an open subset of $X$ with closed complement $Z=X-U$ of dimension $d'$. If one considers only the subsets $U_\alpha^{(p)}$ containing $U$ and the corresponding sequences
\[
U/U=U_\alpha^{(d-d')}/U\subset U_\alpha^{(d-d'+1)}/U\subset \hdots \subset U_\alpha^{(d)}/U\subset U_\alpha^{(d+1)}/U = X/U
\]
then the above spectral sequence becomes
\[
E_1^{p,q}= \bigoplus_{\substack{x\in X^{(p)}\\ x\in Z}}\pi^{\A^1}_{-q}(A)_{-p}(x; \Lambda_x^X) \Rightarrow [X/U, A[p+q]].
\]
Note that for $U=\emptyset$ this spectral sequence coincides with the previous one. Over the spectrum of a field every vector bundle is trivial, thus we have non-canonical isomorphisms
\[
E_1^{p,q}= \bigoplus_{\substack{x\in X^{(p)}\\ x\in Z}}\pi^{\A^1}_{-q}(A)_{-p}(x; \Lambda_x^X)\simeq  \bigoplus_{\substack{x\in X^{(p)}\\ x\in Z}} \pi^{\A^1}_{-q}(A)_{-p}(x).
\]
\end{definition}

\begin{definition} \label{def:RS_complex_groups}
In the notation of Definition~\ref{def:coniveau} for $A=M\in \heart$ the first page of the coniveau spectral sequence is concentrated at the line $q=0$ and looks as follows:
\[
\bigoplus_{\substack{x\in X^{(0)}\\ x\in Z}} M_0(x)  \to \bigoplus_{\substack{x\in X^{(1)}\\ x\in Z}}M_{-1}(x; \Lambda_x^X)  \to \hdots \to \bigoplus_{\substack{x\in X^{(d-1)}\\ x\in Z}}M_{1-d}(x; \Lambda_x^X)  \to \bigoplus_{\substack{x\in X^{(d)}\\ x\in Z}}M_{-d}(x; \Lambda_x^X).
\]
We denote this complex $\RS^X(X,U;M)$ and refer to it as \textit{Rost-Schmid complex} (see \cite[Chapter~5]{Mor12} and especially \cite[Corollary~5.44]{Mor12}). For $U=\emptyset$ we put $\RS^X(X;M)=\RS^X(X,\emptyset;M)$.

Let $E$ be a rank $r$ vector bundle over $X\in\Sm_k$ and $z\colon X\to E$ be the zero section. Then the shifted Rost-Schmid complex $\RS^E(E,E-z(X);M(r))[r]$ looks as follows.
\begin{multline*}
\bigoplus_{x\in X^{(0)}} M_{0}(x; \det E|_x)  \to \bigoplus_{x\in X^{(1)}} M_{-1}(x; \Lambda_x^X\otimes \det E|_x)  \to \hdots \\
\hdots \to \bigoplus_{x\in X^{(d-1)}} M_{1-d}(x; \Lambda_x^X\otimes \det E|_x)  \to \bigoplus_{x\in X^{(d)}} M_{-d}(x; \Lambda_x^X\otimes \det E|_x).
\end{multline*}
We denote this complex $\RS^X(X;E;M)$. 

The above construction of the Rost-Schmid complex gives rise to the following complex of sheaves on the small Nisnevich site of $X$:
\begin{multline*}
\bigoplus_{x\in X^{(0)}}(i_x)_*M_{0}(-; \det E|_x)  \to \bigoplus_{x\in X^{(1)}} (i_x)_*M_{-1}(-; \Lambda_x^X\otimes \det E|_x)  \to \hdots \\
\hdots \to \bigoplus_{x\in X^{(d-1)}} (i_x)_*M_{1-d}(-; \Lambda_x^X\otimes \det E|_x)  \to \bigoplus_{x\in X^{(d)}} (i_x)_*M_{-d}(-; \Lambda_x^X\otimes \det E|_x).
\end{multline*}
Here $(i_x)_*M_{p}(-; \Lambda_x^X\otimes \det E|_x)$ is the Nisnevich direct image sheaf for the embedding $i_x\colon x\to X$. We denote this complex $\RS^X(-;E;M)$. Note that it follows from the construction that
\[
\RS^X(-;E;M) = \RS^X(-;\det E;M).
\]
\end{definition}

\begin{lemma}\label{lm:RS_cohomology}
Let $X$ be a smooth variety and $U\subset X$ be an open subset. Then for every $M\in \heart$ there are canonical isomorphisms
\[
[X/U, M[m]] \xrightarrow{\simeq} \coh^m(\RS^X(X,U;M)).
\]
In particular, for a rank $r$ vector bundle $E$ over $X\in\Sm_k$ we have
\[
[\Th(E), M(r)[r+m]] \xrightarrow{\simeq} \coh^m(\RS^X(X;\det E;M)).
\]
\end{lemma}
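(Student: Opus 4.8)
The plan is to obtain the isomorphism as an edge map of the coniveau spectral sequence from Definition~\ref{def:coniveau}, specialized to $A=M\in\heart$. First I would invoke the spectral sequence
\[
E_1^{p,q}= \bigoplus_{\substack{x\in X^{(p)}\\ x\in Z}}\pi^{\A^1}_{-q}(M)_{-p}(x; \Lambda_x^X) \Rightarrow [X/U, M[p+q]]
\]
attached to $X$, the open subset $U$ with closed complement $Z=X-U$, and the object $M$. Since $X$ is finite dimensional, the filtration is finite and the spectral sequence is bounded, so its convergence is automatic.

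The crucial point is that the heart condition collapses the $E_1$-page onto a single line. After passing to the colimit over flags of open subsets, the $E_1$-terms are computed by the Nisnevich sheaves $\underline{\pi}_{-q}^{\A^1}(M)_{-p}$ evaluated on the points $x$, and by Definition~\ref{def:t-structure} these sheaves vanish whenever $-q\neq 0$; the twist by $\Lambda_x^X$ does not affect vanishing by Lemma~\ref{lm:SL_oriented}. Hence $E_1^{p,q}=0$ for $q\neq 0$, and the line $q=0$ is, by construction, exactly the complex $\RS^X(X,U;M)$ of Definition~\ref{def:RS_complex_groups}.

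With the page supported on one line, each higher differential $d_r$ for $r\ge 2$ has its source and target on distinct lines, hence vanishes, and the spectral sequence degenerates at $E_2$. For each $m$ the unique nonzero term of total degree $m$ is $E_\infty^{m,0}=E_2^{m,0}=\coh^m(\RS^X(X,U;M))$, so the induced filtration on $[X/U,M[m]]$ is trivial and the edge map furnishes the canonical isomorphism $[X/U, M[m]] \xrightarrow{\simeq} \coh^m(\RS^X(X,U;M))$. The step that requires genuine care here is the concentration on $q=0$: one must check that evaluating the homotopy presheaves on the points $x$ agrees with the associated sheaf stalks, so that the vanishing coming from the heart may be applied; everything afterward is formal bookkeeping of a one-line spectral sequence.

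For the final assertion I would apply the first part to the total space $E$, which is smooth, and its open subset $E-z(X)$, with coefficient $M(r)$, obtaining
\[
[\Th(E), M(r)[r+m]] = [E/(E-z(X)), M(r)[r+m]] \cong \coh^{r+m}(\RS^E(E,E-z(X);M(r))).
\]
The definitional identity $\RS^X(X;\det E;M)=\RS^X(X;E;M)=\RS^E(E,E-z(X);M(r))[r]$ recorded in Definition~\ref{def:RS_complex_groups} identifies the right-hand side with $\coh^m(\RS^X(X;\det E;M))$, which yields the stated isomorphism.
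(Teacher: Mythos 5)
Your proposal is correct and follows essentially the same route as the paper: the paper's proof is the one-line observation that for $A=M\in\heart$ the first page of the coniveau spectral sequence is concentrated on the single line $q=0$, which by Definition~\ref{def:RS_complex_groups} is exactly $\RS^X(X,U;M)$, so the degenerate spectral sequence yields the edge isomorphism; the Thom space case then follows by applying this to the pair $(E,E-z(X))$ with coefficients $M(r)$ and the shift identity built into the definition. You merely spell out the convergence, vanishing, and degeneration steps that the paper leaves implicit.
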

\begin{proof}
For $A=M$ the first page of the coniveau spectral sequence is concentrated at one line which by definition coincides with $\RS^X(X,U;M)$.
\end{proof}

\begin{lemma}\label{lm:direct_image_acyclic}
Let $X$ be a smooth variety and $x\in X$ be a (not necessarily closed) point. Denote $i\colon x \to X$ the corresponding embedding. Then for a sheaf $\mathcal{F}$ on the small Nisnevich site $x_{\Nis}$ and a closed subset $Z\subset X$ we have
\[
\coh^m_{\Zar,Z}(X;i_*\mathcal{F})=\coh^m_{\Nis,Z}(X;i_*\mathcal{F})=0
\]
for $m>0$. Here $i_*\mathcal{F}$ is the direct image sheaf in Nisnevich topology.
\end{lemma}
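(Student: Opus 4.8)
The plan is to establish the statement first without supports, namely that $\coh^m_\tau(X;i_*\mathcal{F})=0$ for $m>0$ and $\tau\in\{\Zar,\Nis\}$, and then to deduce the version with supports from the long exact sequence of cohomology with supports. Throughout I regard $x=\Spec k(x)$ together with the morphism of small $\tau$-sites $i\colon x\to X$.

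For the statement without supports I would invoke the Leray spectral sequence
\[
\coh^p_\tau(X;R^q i_*\mathcal{F})\Rightarrow \coh^{p+q}_\tau(x;\mathcal{F}).
\]
Its abutment vanishes in positive total degree, since $x$ is a single point in the Zariski topology and $k(x)$ is a (Henselian) field, so that $\coh^m_\tau(x;\mathcal{F})=0$ for $m>0$. I would then show that the higher direct images vanish, $R^q i_*\mathcal{F}=0$ for $q>0$. Indeed, $R^q i_*\mathcal{F}$ is the $\tau$-sheaf associated to the presheaf $V\mapsto \coh^q_\tau(V\times_X x;\mathcal{F}|_{V\times_X x})$ on the small site of $X$, and for $V$ \'etale (resp. open) over $X$ the fibre $V\times_X x$ is a finite disjoint union of spectra of separable field extensions of $k(x)$; each summand has vanishing higher $\tau$-cohomology, so the presheaf, and hence $R^q i_*\mathcal{F}$, is zero for $q>0$. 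With the higher direct images gone the spectral sequence degenerates to edge isomorphisms $\coh^p_\tau(X;i_*\mathcal{F})\cong \coh^p_\tau(x;\mathcal{F})$, which vanish for $p>0$.

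For the version with supports, set $U=X-Z$ and consider
\[
\cdots \to \coh^m_{\tau,Z}(X;i_*\mathcal{F}) \to \coh^m_\tau(X;i_*\mathcal{F}) \to \coh^m_\tau(U;i_*\mathcal{F}|_U) \to \cdots .
\]
The restriction $i_*\mathcal{F}|_U$ is again a direct image from a point: if $x\in U$ it equals $(i_U)_*\mathcal{F}$ for $i_U\colon x\to U$, while if $x\notin U$ it is the zero sheaf, since no \'etale (resp. open) $V$ over $U$ meets $x$. In either case the previous paragraph gives $\coh^m_\tau(U;i_*\mathcal{F}|_U)=0$ for $m>0$, and together with $\coh^m_\tau(X;i_*\mathcal{F})=0$ for $m>0$ the sequence forces $\coh^m_{\tau,Z}(X;i_*\mathcal{F})=0$ for $m\ge 2$. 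For $m=1$ it remains to check that the restriction map $\coh^0_\tau(X;i_*\mathcal{F})\to \coh^0_\tau(U;i_*\mathcal{F}|_U)$ is onto; but this map is either the identity $\mathcal{F}(x)\to\mathcal{F}(x)$ when $x\in U$, or the zero map onto the zero group when $x\notin U$, so it is surjective and $\coh^1_{\tau,Z}(X;i_*\mathcal{F})=0$ as well.

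The main obstacle is the correct treatment of the higher direct images in the first step: because $x$ is a not-necessarily-closed point, $i$ is not of finite type, so I would either realize $x=\Spec k(x)$ as the cofiltered limit of the open immersions computing $\struct_{X,x}$ followed by passage to the residue field and appeal to continuity of cohomology, or argue directly from the fact that $V\times_X x$ is \'etale over the field $k(x)$ to justify the fibrewise computation of $R^q i_*\mathcal{F}$. The remaining inputs, namely the vanishing of positive $\tau$-cohomology of a field and the long exact sequence of cohomology with supports, are standard.
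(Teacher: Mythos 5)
Your argument is correct and rests on the same key input as the paper's: exactness of $i_*$ for the inclusion of a point, i.e.\ the vanishing of the higher direct images because the fibres $V\times_X x$ are finite disjoint unions of spectra of fields. The paper merely packages the supports differently, running the Grothendieck spectral sequence for the composite $\Gamma_Z(-)\circ i_*$ and observing that both $i_*$ and $\Gamma_Z(-)\circ i_*$ are exact, where you first treat the case $Z=X$ via Leray and then invoke the long exact sequence of the pair; the two routes are essentially equivalent.
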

\begin{proof}
Follows from the Grothendieck spectral sequence for the composition of functors $\Gamma_Z(-) \circ i_*$ and exactness of $i_*$ and $\Gamma_Z(-)\circ i_*$.
\end{proof}

\begin{lemma}\label{lm:acyclic_resolution}
Let $L$ be a line bundle over $X\in\Sm_k$. Then for $M\in\heart$ the complex
\[
M_0(-;L) \to \RS^X(-;L;M)
\]
is a resolution in both Zariski and Nisnevich topologies on the corresponding small site of $X$. Here
\[
M_0(-;L) \to \mathcal{RS}^X_0(-;L;M)=\bigoplus_{x\in X^{(0)}} (i_x)_*M_{0}(-; L|_x)
\]
is induced by restriction homomorphisms
\[
M_0(U;L) \to \bigoplus_{x\in U^{(0)}} M_0(x;L|_x)
\]
for etale $U\to X$.
\end{lemma}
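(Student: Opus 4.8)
The plan is to verify that the augmented complex of sheaves
\[
0 \to M_0(-;L) \xrightarrow{\epsilon} \RS^X(-;L;M)
\]
is exact, and since exactness of a complex of sheaves can be tested on stalks (both the small Zariski and Nisnevich sites of $X$ have enough points), the whole question reduces to a computation over the local schemes of $X$. Concretely, for a point $x\in X$ I would pass to the essentially smooth local scheme $S=\Spec\struct_{X,x}$ in the Zariski case and $S=\Spec\struct_{X,x}^h$ in the Nisnevich case. Over such an $S$ the line bundle $L$ is trivial, so by Lemma~\ref{lm:SL_oriented} the stalk of $M_0(-;L)$ at $x$ is $M_0(S)$, and using the compatibility of the Rost--Schmid complex with localization one identifies the stalk of each term $\bigoplus_{y}(i_y)_*M_{-p}(-;\Lambda_y^X\otimes L|_y)$ with the corresponding summand of $\RS^S(S;M)$. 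Thus the stalk of the augmented complex at $x$ is the augmented Rost--Schmid complex
\[
0 \to M_0(S) \xrightarrow{\epsilon} \bigoplus_{y\in S^{(0)}} M_0(y) \to \bigoplus_{y\in S^{(1)}} M_{-1}(y;\Lambda_y^S) \to \cdots
\]
of the local scheme $S$, with $\epsilon$ the restriction to the generic points.

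The next step is to compute the cohomology of $\RS^S(S;M)$. Both the groups $[{-},M[m]]$ and the formation of the Rost--Schmid complex commute with the filtered colimit over the (Zariski, resp.\ \'etale) neighborhoods of $x$, so Lemma~\ref{lm:RS_cohomology} extends from smooth varieties to the essentially smooth scheme $S$ and gives canonical isomorphisms
\[
\coh^m(\RS^S(S;M)) \cong [S,M[m]] = \pi^{\A^1}_{-m}(M)_0(S).
\]
Here I invoke $M\in\heart$: the homotopy sheaves $\underline{\pi}^{\A^1}_{i}(M)_0$ vanish for $i\neq 0$, hence so do their stalks, which by our conventions on essentially smooth schemes are precisely the values $\pi^{\A^1}_{i}(M)_0(S)$ in both topologies. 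Consequently $\coh^m(\RS^S(S;M))=0$ for $m\neq 0$, while $\coh^0(\RS^S(S;M))\cong[S,M]=M_0(S)$.

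It then remains to check that the augmentation $\epsilon$ realizes this degree-zero isomorphism, i.e.\ that $\epsilon$ is injective with image $\ker\!\left[\bigoplus_y M_0(y)\to\bigoplus_y M_{-1}(y;\Lambda_y^S)\right]=\coh^0(\RS^S(S;M))$. For this one notes that $\epsilon$ is a chain map, since a globally defined section of $M_0(-;L)$ has trivial residues at the codimension-one points, so the first differential kills its image; and that the induced map $M_0(S)\to\coh^0(\RS^S(S;M))$ coincides with the isomorphism of Lemma~\ref{lm:RS_cohomology}, because the latter is the edge homomorphism of the coniveau spectral sequence, which on $E_1^{0,0}$ is exactly restriction to the generic points --- the same formula defining $\epsilon$. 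Granting this, the stalk of the augmented complex is exact at every $x$ in both topologies, whence $M_0(-;L)\to\RS^X(-;L;M)$ is a resolution, as claimed. I expect the real obstacle to be not the vanishing, which is immediate from $M\in\heart$ once Lemma~\ref{lm:RS_cohomology} is available, but the two bookkeeping points of this last paragraph together with the continuity argument that licenses applying Lemma~\ref{lm:RS_cohomology} to the essentially smooth local scheme $S$ and identifying the stalk of the sheaf-level complex with $\RS^S(S;M)$.
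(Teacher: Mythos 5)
Your proposal is correct and follows essentially the same route as the paper: reduce to stalks at the (Zariski or Nisnevich) local schemes $W=\Spec\struct_{X,x}$ or $\Spec\struct_{X,x}^h$, trivialize $L$ there, and apply Lemma~\ref{lm:RS_cohomology} to identify $\coh^m(\RS^X(W;L;M))$ with a homotopy group of $M$ that vanishes for $m\neq 0$ because $M\in\heart$. The paper's proof is just a terser version of yours, leaving implicit the continuity argument and the degree-zero compatibility of the augmentation that you spell out.
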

\begin{proof}
Lemma~\ref{lm:RS_cohomology} yields that for a local scheme $W=\Spec \struct_{X,x}$ (or $W=\Spec \struct_{X,x}^h$) we have
\[
\coh^m(\RS^X(W;L;M))\cong [\Th(L|_W), M(1)[m+1]]\cong \underline{\pi}^{\A^1}_m(M)_0(W).
\]
The last group is trivial for $m\neq 0$ since $M\in\heart$, whence the claim.
\end{proof}

\begin{theorem}[{cf.~\cite[Corollary~5.43]{Mor12}}] \label{thm:Hom_coh}
Let $E$ be a rank $r$ vector bundle over a smooth variety $X$ and let $U\subset X$ be an open subset. Put $Z=X-U$. Then for every $M\in \heart$ there exist canonical isomorphisms
\begin{enumerate}
	\item
	$[X/U,M[m]] \cong \coh^m(\RS^X(X,U;M))\cong \coh^m_{\Nis,Z}(X;M_0) \cong \coh^m_{\Zar,Z}(X;M_0)$,
	\item
	$[\Th(E),M(r)[r+m]] \cong \coh^m(\RS^X(X;\det E;M))\cong \coh^m_{\Nis}(X;M_0\otimes \det E) \cong \coh^m_{\Zar}(X;M_0\otimes \det E)$.
\end{enumerate}
In particular, 
\[
[X,M]\cong M_0(X)=\underline{\pi}^{\A^1}_0(M)_0(X),\quad [\Th(E),M(r)[r]]\cong M_0(X;E)=\underline{\pi}_0^{\A^1}(M)_0(X;E),
\]
i.e. $\pi_0^{\A^1}(M)_0$ and $\pi_0^{\A^1}(M)_0(-;E)$ are Nisnevich sheaves.
\end{theorem}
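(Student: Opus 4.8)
The plan is to deduce the first isomorphism in each of~(1) and~(2) directly from Lemma~\ref{lm:RS_cohomology}, and then to identify the cohomology of the Rost-Schmid complex of global sections with sheaf cohomology by recognizing the sheafified Rost-Schmid complex as an acyclic resolution. Put $L=\triv_X$ in case~(1) and $L=\det E$ in case~(2); by Lemma~\ref{lm:SL_oriented} one has $M_0(-;L)\cong M_0\otimes L$ (which is simply $M_0$ when $L$ is trivial), and the identity $\RS^X(-;E;M)=\RS^X(-;\det E;M)$ from Definition~\ref{def:RS_complex_groups} reduces the case of an arbitrary bundle $E$ to that of the line bundle $\det E$. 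Thus in both cases the coefficient sheaf is $M_0\otimes L$, while the relevant support is $Z=X-U$ in case~(1) and all of $X$ (no support condition, so that $\coh^m_{\tau,X}=\coh^m_\tau$) in case~(2).

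First I would invoke Lemma~\ref{lm:acyclic_resolution}, which exhibits
\[
M_0(-;L)\to \RS^X(-;L;M)
\]
as a resolution of sheaves in both the Zariski and the Nisnevich topology on the small site of $X$. Each term $\bigoplus_{x\in X^{(p)}}(i_x)_*M_{-p}(-;\Lambda_x^X\otimes L|_x)$ of this resolution is a direct sum of Nisnevich direct images of sheaves living on points, hence is acyclic for the sections-with-support functor $\Gamma_Z(X;-)$ in both topologies by Lemma~\ref{lm:direct_image_acyclic}. Consequently $\RS^X(-;L;M)$ is a $\Gamma_Z$-acyclic resolution of $M_0\otimes L$, and therefore
\[
\coh^m_{\tau,Z}(X;M_0\otimes L)\cong \coh^m\bigl(\Gamma_Z(X;\RS^X(-;L;M))\bigr),\qquad \tau\in\{\Zar,\Nis\}.
\]

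The step requiring the most care is the identification of the complex $\Gamma_Z(X;\RS^X(-;L;M))$ with the group-level Rost-Schmid complex $\RS^X(X,U;M)$ of Definition~\ref{def:RS_complex_groups} (and, for $Z=X$, with the full complex $\RS^X(X;\det E;M)$ occurring in case~(2)). For a point $x$ with embedding $i_x\colon x\to X$ and a sheaf $\mathcal{F}$ on $x_\Nis$, the group $\Gamma_Z(X;(i_x)_*\mathcal{F})$ is the kernel of the restriction $\mathcal{F}(x)\to\mathcal{F}(x\times_X U)$. Since $x\times_X U$ equals $x$ when the point $x$ lies in $U$ and is empty when $x\in Z$, this kernel is $\mathcal{F}(x)$ for $x\in Z$ and vanishes otherwise. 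Applying this summand by summand identifies the degree-$p$ term of $\Gamma_Z(X;\RS^X(-;L;M))$ with $\bigoplus_{x\in X^{(p)},\,x\in Z}M_{-p}(x;\Lambda_x^X\otimes L|_x)$, and the differentials agree by the construction of the sheafified complex. Combined with Lemma~\ref{lm:RS_cohomology}, this yields all the isomorphisms in~(1) and~(2) at once; moreover, since the resulting group-level complex does not depend on the topology, the Zariski and Nisnevich cohomology groups necessarily coincide.

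Finally I would obtain the last assertion by specializing to $m=0$. Taking $U=\emptyset$ in~(1), so that $Z=X$ and $X/\emptyset=X_+$, gives $[X,M]\cong\coh^0_{\Nis,X}(X;M_0)=M_0(X)$; taking $m=0$ in~(2) gives $[\Th(E),M(r)[r]]\cong\coh^0_\Nis(X;M_0\otimes\det E)=(M_0\otimes\det E)(X)\cong M_0(X;E)$ by Lemma~\ref{lm:SL_oriented}. As $[X,M]=\pi^{\A^1}_0(M)_0(X)$ and $[\Th(E),M(r)[r]]=\pi^{\A^1}_0(M)_0(X;E)$ by definition, these isomorphisms assert exactly that the presheaves $\pi^{\A^1}_0(M)_0$ and $\pi^{\A^1}_0(M)_0(-;E)$ agree with their sheafifications, i.e. are already Nisnevich sheaves.
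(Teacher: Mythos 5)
Your argument is correct and follows exactly the route of the paper, which proves this theorem by combining Lemma~\ref{lm:RS_cohomology} (first isomorphism), Lemma~\ref{lm:acyclic_resolution} (the sheafified Rost--Schmid complex is a resolution) and Lemma~\ref{lm:direct_image_acyclic} ($\Gamma_Z$-acyclicity of the terms); the only detail the paper leaves implicit is your careful identification of $\Gamma_Z(X;\RS^X(-;L;M))$ with the group-level complex supported on $Z$, which you verify correctly.
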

\begin{proof}
Follows from Lemmas~\ref{lm:RS_cohomology},~\ref{lm:direct_image_acyclic} and~\ref{lm:acyclic_resolution}.
\end{proof}

\begin{remark} \label{rm:coh_SLc}
It follows from the above theorem and Remark~\ref{rm:SLc_orientation} that if $M\in\heart$ is a commutative monoid, then the ring cohomology theory represented by $M$ is $\SL^c$-oriented in the sense of \cite[Definition~3.3]{PW10}. In particular, 
\[
\pi^{\A^1}_i(M)_n(X;E)\cong \pi^{\A^1}_i(M)_n(X;\det E).
\]
See also \cite[Theorem~4.2.7]{AH11} for the case of $M=\coh^t_0(\SSp)$.
\end{remark}

\begin{lemma} \label{lm:fibration_RS}
Let $X$ be a smooth variety of dimension $d$, $\rho\colon \widetilde{X}\to X$ be a Zariski locally trivial $\A^s$-fibration, $E$ be a rank $r$ vector bundle over $\widetilde{X}$, $L$ be a line bundle over $X$ and $\theta\colon\det E\cong \rho^*L$ be an isomorphism. Then for every $M\in\heart$ there exists an isomorphism
\[
\Theta\colon \coh^d(\RS^X(X;L;M)) \xrightarrow{\simeq} [\Th(E),M(r)[r+d]]
\]
such that for every rational point $y\in X(k)$ the following diagram commutes.
\[
\xymatrix{
	\pi_0^{\A^1}(M)_{-d}(\widetilde{y};\Lambda_{\widetilde{y}}^E) \ar[r]^(0.45){\phi}_(0.45)\simeq \ar[d]_{\Psi}^\cong &  M_{-d}(y; \Lambda_y^X\otimes L_y) \ar[r]^{i_y}  & \coh^d(\RS^X(X;L;M)) \ar[d]^\cong_\Theta \\
	\pi_0^{\A^1}(M)_{-d}(\widetilde{y};N_{\widetilde{y}/E}) \ar[r]^(0.4){d}_(0.4)\simeq  &  [E/(E-\widetilde{y}),M(r)[r+d]] \ar[r]^(0.52)Q &  [\Th(E),M(r)[r+d]]
}
\]
Here
\begin{itemize}
	\item
	$\widetilde{y}=z\circ \rho^{-1}(y)\subset E$ for the zero section $z\colon \widetilde{X}\to E$,
	\item
	$i_y$ is induced by the inclusion $M_{-d}(y; \Lambda_y^X\otimes L_y)\to \bigoplus\limits_{x\in X^{(d)}} M_{-d}(x; \Lambda_x^X\otimes L_x)$ to the summand corresponding to the point $y$,
	\item
	$Q$ is induced by the quotient map $\Th(E)\to E/(E-\widetilde{y})$,
	\item
	$d$ is induced by the homotopy purity theorem,
	\item
	$\Psi$ is given by Lemma~\ref{lm:SL_oriented},
	\item
	$\phi$ is induced by the projection $\rho\colon \widetilde{y} \to y$ and the isomorphisms $\Lambda^E_{\widetilde{y}}\cong \Lambda^{\widetilde{X}}_{\widetilde{y}}\otimes \det E|_{\widetilde{y}} \cong \rho^*(\Lambda_y^X\otimes L_y)$.
\end{itemize}
\end{lemma}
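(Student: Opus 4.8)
The plan is to realize $\Theta$ as a chain of canonical isomorphisms passing through the Nisnevich cohomology of a twisted homotopy sheaf, and then to establish the diagram by running the same chain ``with support in the fibre over $y$''. For the construction I would first rewrite both groups cohomologically: Theorem~\ref{thm:Hom_coh} gives canonical isomorphisms
\[
\coh^d(\RS^X(X;L;M))\cong \coh^d_\Nis(X;M_0\otimes L),\qquad [\Th(E),M(r)[r+d]]\cong \coh^d_\Nis(\widetilde X;M_0\otimes\det E).
\]
The isomorphism $\theta\colon\det E\cong\rho^*L$ identifies $M_0\otimes\det E$ with $M_0\otimes\rho^*L$, and, since $M_0$ is a big-site sheaf and the trivializations of $\rho^*L$ are pulled back from those of $L$, the latter is canonically $\rho^*(M_0\otimes L)$. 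Finally, because $\rho$ is a Zariski locally trivial $\A^s$-fibration and $M_0\otimes L$ is Nisnevich-locally on $X$ the strictly $\A^1$-invariant sheaf $M_0$, homotopy invariance of Nisnevich cohomology (see \cite{Mor12}) shows that $\rho^*\colon \coh^d_\Nis(X;M_0\otimes L)\to\coh^d_\Nis(\widetilde X;\rho^*(M_0\otimes L))$ is an isomorphism. Composing these four maps defines $\Theta$; being a composite of isomorphisms, it is one.

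For the commutativity I would pass everywhere to cohomology with supports. On the source side, the Rost--Schmid complex computing cohomology with support in the closed point $y$ is concentrated in codimension $d$, so there is a natural identification $\coh^d_{\{y\}}(X;M_0\otimes L)\cong M_{-d}(y;\Lambda_y^X\otimes L_y)$ under which $i_y$ becomes the edge map ``forget the support'' into $\coh^d_\Nis(X;M_0\otimes L)$. On the target side, $[E/(E-\widetilde y),M(r)[r+d]]$ is the cohomology of $E$ with support in $\widetilde y=z(\rho^{-1}(y))$; the map $d$ is the homotopy purity identification of this group with $\pi_0^{\A^1}(M)_{-d}(\widetilde y;N_{\widetilde y/E})$, while $Q$ is again a ``forget the support'' map, enlarging the support from $\widetilde y$ to $z(\widetilde X)$. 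The crux is then that each of the four isomorphisms building $\Theta$ admits a ``with support'' counterpart, fitting into squares with the forget-support maps; granting this, $\Theta$ carries $i_y$ to $Q$, and it only remains to match the two descriptions of the local group on the left of the diagram.

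That matching is where the genuine work lies. Using $N_{\widetilde y/E}\cong\rho^*N_{y/X}\oplus E|_{\widetilde y}$ one computes $\Lambda^E_{\widetilde y}\cong\rho^*(\Lambda_y^X\otimes L_y)$, which is exactly the determinant identification entering $\phi$; the remaining part of $\phi$ is the $\A^1$-invariance isomorphism along $\rho\colon\widetilde y\to y$, and $\Psi$ is the $\SL^c$-orientation of Lemma~\ref{lm:SL_oriented} trading the twist by $N_{\widetilde y/E}$ for the twist by its determinant. The main obstacle is precisely this bookkeeping: one must check that the homotopy-invariance isomorphism is compatible with supports, so that its ``support'' version is the $\A^1$-invariance map $\phi$, and that the homotopy purity theorem \cite{MV99} for $\widetilde y\subset E$ is compatible with the orientation isomorphism of Lemma~\ref{lm:SL_oriented}. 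Both are naturality statements which, while requiring one to unwind the construction of the coniveau spectral sequence of Definition~\ref{def:coniveau} together with the stalkwise isomorphisms of Lemma~\ref{lm:SL_oriented}, introduce no new idea beyond careful tracking of the twisting line bundles.
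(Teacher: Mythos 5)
Your argument is correct in outline, but it takes a genuinely different route from the paper. The paper constructs $\Theta$ in one step: it pulls the codimension filtration $\{U_\alpha^{(p)}\}$ of $X$ back to a filtration $\widetilde{W}_\alpha^{(p)}=\pi^{-1}\rho^{-1}(U_\alpha^{(p)})\cup(E-z(\widetilde X))$ of the total space $E$, runs the coniveau spectral sequence for $[\Th(E),M(r)[\ast]]$ with respect to that filtration, observes that for $M\in\heart$ it collapses to the single row $\bigoplus_{x\in X^{(p)}}M_{-p}(\widetilde x;N_{\widetilde x/E})$, and identifies that row with $\RS^X(X;L;M)$ via $\det N_{\widetilde x/E}\cong\rho^*(\Lambda_x^X\otimes L|_x)$ and homotopy invariance along $\widetilde x\to x$. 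The payoff of that construction is that the maps $d$, $Q$ and $i_y$ in the diagram are literally the structural maps of the spectral sequence (purity identification of the $E_1$-term, extension of support, inclusion of a summand), so the commutativity is essentially built in. Your route instead factors $\Theta$ through $\coh^d_\Nis(X;M_0\otimes L)\cong\coh^d_\Nis(\widetilde X;\rho^*(M_0\otimes L))\cong\coh^d_\Nis(\widetilde X;M_0\otimes\det E)$, which buys conceptual clarity (everything is sheaf cohomology) at the price of two extra inputs you should flag explicitly: (i) homotopy invariance of Nisnevich cohomology along a Zariski locally trivial $\A^s$-fibration for the \emph{twisted} sheaf $M_0\otimes L$ --- this needs a local-to-global step (e.g.\ $R^q\rho_*\rho^*=0$ for $q>0$ on a cover trivializing both $\rho$ and $L$, then Leray), not just Morel's $X\times\A^1$ statement; and (ii) a version of Theorem~\ref{thm:Hom_coh} that combines supports and twists, $[\Th(E)/\Th(E|_{E-\widetilde y}),M(r)[r+m]]\cong\coh^m_{\Nis,\widetilde y}(\widetilde X;M_0\otimes\det E)$, which is not literally stated in the paper though it follows by the same resolution argument. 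Your identification of the remaining work --- compatibility of the homotopy-invariance and purity isomorphisms with supports and with the orientation isomorphism of Lemma~\ref{lm:SL_oriented} --- is accurate, and the level of detail at which you leave it is comparable to the paper's own ``one can easily check''; so I would count this as a correct alternative proof rather than a gap.
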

\begin{proof}
Consider a sequence of open subsets
\[
\emptyset=U_\alpha^{(0)}\subset U_\alpha^{(1)}\subset \hdots \subset U_\alpha^{(d)}\subset U_\alpha^{(d+1)} = X
\]
as in Definition~\ref{def:coniveau}. Put $\widetilde{U}_\alpha^{(p)}=\rho^{-1} (U_\alpha^{(p)})$ and $\widetilde{W}_\alpha^{(p)}=\pi^{-1}(\widetilde{U}_\alpha^{(p)})\cup (E-z(\widetilde{X}))$ for the canonical projection $\pi\colon E\to \widetilde{X}$ and the zero section $z\colon \widetilde{X}\to E$. We have a sequence of open subsets
\[
E-z(\widetilde{X})=\widetilde{W}_\alpha^{(0)}\subset \widetilde{W}_\alpha^{(1)}\subset \hdots \subset \widetilde{W}_\alpha^{(d)}\subset \widetilde{W}_\alpha^{(d+1)} = E.
\]
For $M\in\heart$ we obtain a family of long exact sequences
\begin{multline*}
\hdots \to [\Th(N_{\widetilde{Z}_\alpha^{(p)}/E}), M(r)[m]] \to [\widetilde{W}_\alpha^{(p+1)}/\widetilde{W}_\alpha^{(0)}, M(r)[m]] \to \\
\to  [\widetilde{W}_\alpha^{(p)}/\widetilde{W}_\alpha^{(0)}, M(r)[m]] \to [\Th(N_{\widetilde{Z}_\alpha^{(p)}/E}), M(r)[m+1]] \to \hdots
\end{multline*}
where $\widetilde{Z}_\alpha^{(p)}=z\circ\rho^{-1}(Z_\alpha^{(p)})$ with $Z_\alpha^{(p)}=U_\alpha^{(p+1)}-U_\alpha^{(p)}$. 

Take the colimit over the families of $U_\alpha^{(p)}$. The resulting long exact sequences can be organized in the following version of the coniveau spectral sequence (cf. Definition~\ref{def:coniveau}):
\[
E_1^{p,q}= \bigoplus_{x\in X^{(p)}}
\pi^{\A^1}_{r-q}(M)_{-p}(\widetilde{x};N_{\widetilde{x}/E}) \Rightarrow [\Th(E), M(r)[p+q]],
\]
where $\widetilde{x}=z\circ\rho^{-1}(x)\cong \A^{s}_{k(x)}$. Since every vector bundle over an affine space is trivial and $M\in\heart$ we have
\[
\pi^{\mathbb{A}^1}_{r-q}(M)_{-p}(\widetilde{x};N_{\widetilde{x}/E})\cong \pi^{\mathbb{A}^1}_{r-q}(M)_{-p}(\widetilde{x})\cong \pi^{\mathbb{A}^1}_{r-q}(M)_{-p}(x)=0
\]
for $q\neq r$. Thus the first page of the spectral sequence is concentrated at the line $q=r$ and takes the form:
\[
\bigoplus_{x\in X^{(0)}} M_{0}(\widetilde{x};N_{\widetilde{x}/E})  \to \bigoplus_{x\in X^{(1)}} M_{-1}(\widetilde{x};N_{\widetilde{x}/E})  \to
\hdots \to  \bigoplus_{x\in X^{(d-1)}} M_{1-d}(\widetilde{x};N_{\widetilde{x}/E})  \to \bigoplus_{x\in X^{(d)}} M_{-d}(\widetilde{x};N_{\widetilde{x}/E}).
\]

We have $\det N_{\widetilde{x}/E} \cong \rho^* \Lambda_{x}^X \otimes \det E|_{\widetilde{x}} \cong \rho^*(\Lambda_{x}^X \otimes L|_x)$, thus we may rewrite this complex as
\begin{multline*}
\bigoplus_{x\in X^{(0)}} M_{0}(\widetilde{x};\rho^*L|_x)  \to \bigoplus_{x\in X^{(1)}} M_{-1}(\widetilde{x};\rho^*(\Lambda_{x}^{X} \otimes L|_x))  \to \hdots \\
\hdots \to  \bigoplus_{x\in X^{(d-1)}} M_{1-d}(\widetilde{x};\rho^*(\Lambda_{x}^{X} \otimes L|_x))  \to \bigoplus_{x\in X^{(d)}} M_{-d}(\widetilde{x};\rho^*(\Lambda_{x}^{X} \otimes L|_x)).
\end{multline*}
The projection $\rho \colon \widetilde{x}\to x$ is a homotopy equivalence, thus we may further rewrite it as
\begin{multline*}
\bigoplus_{x\in X^{(0)}} M_{0}(x;L|_x)  \to \bigoplus_{x\in X^{(1)}} M_{-1}(x;\Lambda_{x}^{X} \otimes L|_x)  \to \hdots \\
\hdots \to  \bigoplus_{x\in X^{(d-1)}} M_{1-d}(x;\Lambda_{x}^{X} \otimes L|_x)  \to \bigoplus_{x\in X^{(d)}} M_{-d}(x;\Lambda_{x}^{X} \otimes L|_x).
\end{multline*}
One can easily check that this complex coincides with $\RS^X(X;L;M)$. The claim follows.
\end{proof}

\section{Rost-Schmid complex for $C/D$}

In this section we present a variant of the Rost-Schmid complex for $C/D$ with $C$ being a smooth curve and $D\subset C^{(1)}$ being a finite collection of closed points. This complex, roughly speaking, is the cone for an appropriate morphism from the Rost-Schmid complex of $C$ to the Rost-Schmid complex of $D$.

\begin{definition}\label{def:RS_CD}
Let $\pi\colon E\to C$ be a rank $r$ vector bundle over a smooth curve $C$ with the zero section $z\colon C\to E$. Let $D\subset C^{(1)}$ be a finite collection of closed points. For an open subset $D\subset U_\alpha\subset C$ put $U^E_\alpha=\pi^{-1}(U_\alpha)\cup (E-z(C))$. Then for every $M\in\heart$ there is a long exact sequence
\begin{multline*}
\hdots \to [E/U^E_\alpha,M(r)[r+m]] \to [\Th(E),M(r)[r+m]] \to \\
\to  [U^E_\alpha/(E-z(C)), M(r)[r+m]]\to [E/U^E_\alpha,M(r)[r+m+1]] \to \hdots
\end{multline*}
Taking the colimit over $U_\alpha$ and applying Lemma~\ref{lm:SL_oriented} we obtain a long exact sequence
\begin{multline*}
\hdots \to \bigoplus_{\substack{x\in C^{(1)} \\ x\not\in D}} \pi^{\A^1}_{-m+1}(M)_{-1}(x; \Lambda_x^C\otimes \det E|_x)
\to [\Th(E),M(r)[r+m]] \to \\
\to \pi^{\A^1}_{-m}(M)_{0}(\Spec \struct_{C,D};\det E) \xrightarrow{\partial} \bigoplus_{\substack{x\in C^{(1)} \\ x\not\in D}} \pi^{\A^1}_{-m}(M)_{-1}(x; \Lambda_x^C\otimes \det E|_x) \to \hdots
\end{multline*}
The corresponding version of the Rost-Schmid complex consists of two terms:
\[
M_0(\Spec \struct_{C,D};\det E)  \xrightarrow{\partial} \bigoplus_{\substack{x\in C^{(1)} \\ x\not\in D}} M_{-1}(x; \Lambda_x^C\otimes \det E|_x).
\]
We consider the following modified version of the Rost-Schmid complex,
\[
M_0(\Spec \struct_{C,D};\det E)  \xrightarrow{(\partial, i_D^*)} \bigoplus_{\substack{x\in C^{(1)} \\ x\not\in D}} M_{-1}(x; \Lambda_x^C\otimes \det E|_x)\oplus \bigoplus_{x\in D} M_{0}(x; \det E|_x),
\]
where $i_D^*$ is the restriction morphism for the embedding $i_D\colon D\to \Spec \struct_{C,D}$. This complex is denoted $\RS^C(C,D;E;M)$. Note that 
\[
\RS^C(C,D;E;M)=\RS^C(C,D;\det E;M).
\]
\end{definition}

\begin{lemma} \label{lm:RSCD_coh}
Let $\pi\colon E\to C$ be a rank $r$ vector bundle over a smooth curve $C$ with the zero section $z\colon C\to E$. Let $D\subset C^{(1)}$ be a finite collection of closed points. Then for $M\in\heart$ and $m\in \Z$ there are canonical isomorphisms
\[
[\Th(E)/\Th(E|_D),M(r)[r+m]]\cong \coh^m(\RS^C(C,D;\det E;M)).
\]
\end{lemma}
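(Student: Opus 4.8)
The plan is to compare the cofiber sequence of Thom spaces
\[
\Th(E|_D) \to \Th(E) \to \Th(E)/\Th(E|_D)
\]
with a short exact sequence of complexes realizing $\RS^C(C,D;\det E;M)$ as a cone, in the spirit of the introduction to this section. First I would apply $[-,M(r)[r+m]]$ to the cofiber sequence and obtain a long exact sequence; two of the three kinds of terms are already understood. By the long exact sequence of Definition~\ref{def:RS_CD} together with $M\in\heart$, the group $[\Th(E),M(r)[r+m]]$ is the cohomology of the unmodified two-term complex
\[
M_0(\Spec\struct_{C,D};\det E) \xrightarrow{\partial} \bigoplus_{\substack{x\in C^{(1)}\\ x\notin D}} M_{-1}(x;\Lambda_x^C\otimes\det E|_x),
\]
so it equals $\ker\partial$ for $m=0$, equals $\coker\partial$ for $m=1$, and vanishes otherwise. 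Since $\Th(E|_D)=\bigvee_{x\in D}\Th(E|_x)$ with each $x$ zero-dimensional, Theorem~\ref{thm:Hom_coh}(2) identifies $[\Th(E|_D),M(r)[r+m]]$ with $\bigoplus_{x\in D} M_0(x;\det E|_x)$ for $m=0$ and with $0$ otherwise.

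On the algebraic side I would observe that $\RS^C(C,D;\det E;M)$ is the shifted cone of the restriction chain map $i_D^*$ from the unmodified complex above to the complex $\bigoplus_{x\in D}M_0(x;\det E|_x)$ placed in cohomological degree $0$. Concretely there is a short exact sequence of complexes whose sub is $\bigoplus_{x\in D}M_0(x;\det E|_x)$ placed in degree $1$ and whose quotient is the unmodified complex, the middle term being $\RS^C(C,D;\det E;M)$. Its long exact sequence, using that the sub has cohomology $\bigoplus_{x\in D}M_0(x;\det E|_x)$ in degree $1$ only and that the connecting map is computed by the usual zig-zag to be $i_D^*$ restricted to the cocycles $\ker\partial$, collapses to the five-term exact sequence
\[
0 \to \coh^0(\RS^C(C,D;\det E;M)) \to \ker\partial \xrightarrow{\,i_D^*\,} \bigoplus_{x\in D}M_0(x;\det E|_x) \to \coh^1(\RS^C(C,D;\det E;M)) \to \coker\partial \to 0,
\]
with $\coh^m(\RS^C(C,D;\det E;M))=0$ for $m\neq 0,1$.

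By the identifications of the first paragraph, the topological long exact sequence likewise collapses to a five-term exact sequence with the same outer terms $\ker\partial$, $\bigoplus_{x\in D}M_0(x;\det E|_x)$ and $\coker\partial$, whose unknown slots are now $[\Th(E)/\Th(E|_D),M(r)[r]]$ and $[\Th(E)/\Th(E|_D),M(r)[r+1]]$ and whose middle map is the restriction $[\Th(E),M(r)[r]]\to[\Th(E|_D),M(r)[r]]$. To finish I would produce a commutative ladder between the two five-term sequences and invoke the five lemma. The only nonformal ingredient is that the two middle maps coincide: the geometric restriction induced by the inclusion of the fiber over $D$ must agree with the sheaf-restriction $i_D^*$. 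I would check this by tracing the identification $[\Th(E),M(r)[r]]\cong\ker\partial\subseteq M_0(\Spec\struct_{C,D};\det E)=\colim_{D\subseteq U_\alpha}[\Th(E|_{U_\alpha}),M(r)[r]]$ furnished by Definition~\ref{def:RS_CD}, and observing that restriction to the fiber over $D$ becomes, in this colimit, the restriction of a section of $M_0(-;\det E)$ to the closed points of $D$.

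The main obstacle is precisely this compatibility: verifying that the geometrically defined restriction on Thom-space cohomology is intertwined with the algebraic restriction $i_D^*$ under the identifications of Theorem~\ref{thm:Hom_coh} and the construction of Definition~\ref{def:RS_CD}. This is a naturality statement for the homotopy purity isomorphisms and the localization long exact sequences with respect to $\Th(E|_D)\hookrightarrow\Th(E)$. Granting it, the comparison of the two five-term exact sequences is pure homological algebra and yields the asserted isomorphisms $[\Th(E)/\Th(E|_D),M(r)[r+m]]\cong\coh^m(\RS^C(C,D;\det E;M))$ for all $m$.
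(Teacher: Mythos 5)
Your proposal is correct and follows essentially the same route as the paper: both arguments play the long exact sequence of the cofibration $\Th(E|_D)\to\Th(E)\to\Th(E)/\Th(E|_D)$ against the coniveau data of Definition~\ref{def:RS_CD}, identify the known terms using $M\in\heart$ together with the vanishing of $\coh^1_{\Nis}$ of the semilocal scheme, and reduce everything to the compatibility of the geometric restriction to $\Th(E|_D)$ with the algebraic $i_D^*$ (which the paper checks via the commuting triangle for $m=0$ and the morphism of triangles $\Th(E|_D)\to\Th(E|_{\Spec\struct_{C,D}})$ for $m=1$). Your repackaging of the two diagram chases as a five-lemma comparison of two five-term exact sequences is only a cosmetic difference, and you correctly isolate the one nonformal ingredient.
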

\begin{proof}
It follows from the discussion in Definition~\ref{def:RS_CD} that there is a long exact sequence
\begin{multline*}
\hdots \to \bigoplus_{\substack{x\in C^{(1)} \\ x\not\in D}} \pi^{\A^1}_{-m+1}(M)_{-1}(x; \Lambda_x^C\otimes \det E|_x)
\to [\Th(E),M(r)[r+m]] \to \\
\to \pi^{\A^1}_{-m}(M)_{0}(\Spec \struct_{C,D};\det E) \xrightarrow{\partial} \bigoplus_{\substack{x\in C^{(1)} \\ x\not\in D}} \pi^{\A^1}_{-m}(M)_{-1}(x; \Lambda_x^C\otimes \det E|_x) \to \hdots
\end{multline*}
Note that $\pi^{\A^1}_{-m}(M)_{-1}(x; \Lambda_x^C\otimes \det E|_x)=0$ for $m\neq 0$ since $M\in\heart$. Moreover, 
\[
\pi^{\A^1}_{-m}(M)_{0}(\Spec \struct_{C,D};\det E)\cong  \coh^{m}( \RS^C (\Spec \struct_{C,D};E;M)) \cong \coh^{m}_{\Nis}(\Spec \struct_{C,D};M_0\otimes \det E)
\]
by Theorem~\ref{thm:Hom_coh}. The cohomology of the Rost-Schmid complex vanishes for $m\neq 0,1$ since the complex consists of two terms. The Nisnevich cohomology group vanishes for $m=1$ by Lemma~\ref{lm:coh_semiloc}. Thus $\pi^{\A^1}_{-m}(M)_{0}(\Spec \struct_{C,D};\det E)=0$ for $m\neq 0$. Thus we have an exact sequence
\begin{multline*} 
0 \to [\Th(E),M(r)[r]] \to M_{0}(\Spec \struct_{C,D};\det E)  \xrightarrow{\partial} \\
\xrightarrow{\partial}  \bigoplus_{\substack{x\in C^{(1)} \\ x\not\in D}} M_{-1}(x; \Lambda_x^C\otimes \det E|_x) \to [\Th(E),M(r)[r+1]] \to 0 
\end{multline*}
and $[\Th(E),M(r)[r+m]]=0$ for $m\neq 0,1$.

The natural embedding $\Th(E|_D)\to \Th(E)$ gives rise to a long exact sequence
\begin{multline*}
\hdots \to [\Th(E)/\Th(E|_D),M(r)[r+m]] \to [\Th(E),M(r)[r+m]] \to \\
\to [\Th(E|_D),M(r)[r+m]] \to [\Th(E)/\Th(E|_D),M(r)[r+m+1]]  \to\hdots
\end{multline*}
By the above and since $[\Th(E|_D),M(r)[r+m]]=\pi^{\A^1}_{-m}(M)_0(D;E|_D)$ vanishes for $m\neq 0$ we have 
\[
[\Th(E)/\Th(E|_D),M(r)[r+m]]=0
\]
for $m\neq 0,1$.

For $m=0$ consider the following diagram.
\[
\xymatrix @C=0.8pc{
	& 0 \ar[d] & & \\
	& [\Th(E)/\Th(E|_D),M(r)[r]] \ar[d] & & \\
	0 \ar[r] & [\Th(E),M(r)[r]] \ar[r] \ar[d] & M_{0}(\Spec \struct_{C,D};\det E) \ar[dl]^{i_D^*} \ar[r]^(0.45){\partial} & \bigoplus\limits_{\substack{x\in C^{(1)} \\ x\not\in D}} M_{-1}(x; \Lambda_x^C\otimes \det E|_x) \\
	& M_0(D;\det E|_D) & &
}
\]
Here the vertical exact sequence is the one associated to the embedding $\Th(E|_D)\to \Th(E)$ while the horizontal exact sequence is a part of the coniveau one. The triangle commutes since all the involved morphisms are given by restriction and Lemma~\ref{lm:SL_oriented}. The isomorphism
\[
[\Th(E)/\Th(E|_D),M(r)[r]]\cong \coh^0(\RS^C(C,D;\det E;M))
\]
follows via diagram chase.

For $m=1$ consider the following commutative diagram.
\[
\xymatrix @R=1.0pc{
	[\Th(E),M(r)[r]] \ar[r]^= \ar[d] & [\Th(E),M(r)[r]] \ar[d] \\
	M_{0}(\Spec \struct_{C,D};\det E) \ar[d]_\partial \ar[r]^{i_D^*} & M_0(D;\det E|_D) \ar[d]\\
	\bigoplus\limits_{\substack{x\in C^{(1)} \\ x\not\in D}} M_{-1}(x; \Lambda_x^C\otimes \det E|_x)
	\ar@{}[d]^(.15){}="a"^(.85){}="b" \ar "a";"b"
	\ar[r] &  [\Th(E)/\Th(E|_D),M(r)[r+1]] \ar[d]  \\
	[\Th(E),M(r)[r+1]] \ar[r]^= \ar[d] & [\Th(E),M(r)[r+1]] \ar[d] \\
	0 & 0
}
\]
Here the exact sequence in the first row is a part of the coniveau sequence while the exact sequence in the second row is the one associated to the embedding $\Th(E|_D)\to \Th(E)$. The horizontal morphisms are induced by the following morphism of triangles (to be more precise one needs to take a limit over $U_\alpha$ as in Definition~\ref{def:RS_CD})
\[
\xymatrix{
	\Th(E|_D) \ar[r] \ar[d]^{i_D} & \Th(E) \ar[r] \ar[d]^{=} & \Th(E)/\Th(E|_D) \ar[r] \ar[d] & \Th(E|_D)[1] \ar[d] \\
	\Th(E|_{\Spec \struct_{C,D}}) \ar[r] & \Th(E) \ar[r] & \Th(E)/\Th(E|_{\Spec \struct_{C,D}}) \ar[r] & \Th(E|_{\Spec \struct_{C,D}})[1]
}
\]
The claim follows by diagram chase.
\end{proof}

\begin{definition} \label{def:MCD}
Let $L$ be a line bundle over a smooth curve $C$ and $D\subset C^{(1)}$ be a finite collection of closed points. For $M\in\heart$ consider the following sheaves on the small Nisnevich site of $C$. 
\begin{itemize}
	\item
	Let $(M_0)_{C}^D(-; L)$ be the sheaf given by
	\[
	(M_0)_{C}^D(U; L)= M_0(\Spec \struct_{U,D_U}; L)
	\]
	for an etale morphism $U\to C$ and $D_U=U\times_C D$. It follows from Lemma~\ref{lm:RS_cohomology} that this sheaf coincides with the kernel
	\[
	(M_0)_{C}^D(-;L)=\ker \left[\bigoplus_{x\in C^{(0)}} (i_x)_* M_0(-;L|_x) \xrightarrow{\partial} \bigoplus_{x\in D} (i_x)_* M_{-1}(-;\Lambda_x^C\otimes L|_x) \right].
	\]
	Here on the right-hand side we have the Rost-Schmid complex of sheaves 
	\[
	\RS^{\Spec \struct_{C,D}}(-;L|_{\Spec \struct_{C,D}};M)
	\]
	introduced in Definition~\ref{def:RS_complex_groups} pushed forward to $C$.
	\item
	Let $(M_0)_{C,D}(-; L)$ be the sheaf given by
	\[
	(M_0)_{C,D}(U; L)= \ker \left[ M_0(U; L)\xrightarrow{i_{D_U}^*} M_0(D_U; L) \right]
	\]
	for an etale morphism $U\to C$, $D_U=U\times_C D$ and the natural embedding $i_{D_U}\colon D_U\to U$. In other words, $(M_0)_{C,D}(-; L)= \ker \left[ M_0(-; L)\xrightarrow{i_{D}^*} (i_D)_*M_0(-; L|_D) \right]$.
\end{itemize}
Note that by Lemma~\ref{lm:SL_oriented} we have a canonical isomorphism
\[
(M_0)_{C,D}(-; L) \cong  (M_0\otimes L )_{C,D} = \ker \left[ M_0\otimes L\xrightarrow{i_{D}^*} (i_D)_*(M_0\otimes L|_D) \right].
\]

The discussion in Definition~\ref{def:RS_CD} gives rise to the following two term complex of sheaves on the small Nisnevich site of $C$. 
\[
(M_0)_C^D(-;L) \xrightarrow{(\partial,i_D^*)} \bigoplus_{\substack{x\in C^{(1)} \\ x\not\in D}} (i_x)_* M_{-1}(-; \Lambda_x^C\otimes L|_x)\oplus \bigoplus_{x\in D} (i_x)_*M_{0}(-; L|_x)
\]
Here $(i_x)_*$ is the direct image in Nisnevich topology. This complex is denoted $\RS^C(-,D;E;M)$.
\end{definition}

\begin{lemma}\label{lm:MCD_acyclic}
Let $L$ be a line bundle over a smooth curve $C$ and $D\subset C^{(1)}$ be a finite collection of closed points. Then for $M\in\heart$ and $m>0$ we have
\[
\coh^m_\Nis(C,(M_0)_{C}^D(-; L))=\coh^m_\Zar(C,(M_0)_{C}^D(-; L))=0.
\]
\end{lemma}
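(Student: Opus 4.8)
The plan is to exhibit $(M_0)_C^D(-;L)$ as the kernel term of a short exact sequence of sheaves on the small site of $C$ whose other two terms are cohomologically trivial, and then to read off the vanishing from the associated long exact sequence. Set
\[
\mathcal{G}_0=\bigoplus_{x\in C^{(0)}}(i_x)_*M_0(-;L|_x),\qquad \mathcal{G}_1=\bigoplus_{x\in D}(i_x)_*M_{-1}(-;\Lambda_x^C\otimes L|_x).
\]
Definition~\ref{def:MCD} already identifies $(M_0)_C^D(-;L)$ with $\ker[\mathcal{G}_0\xrightarrow{\partial}\mathcal{G}_1]$, so the first task is to check that $\partial$ is an epimorphism of sheaves in both the Zariski and the Nisnevich topology, which would give the short exact sequence
\[
0\to (M_0)_C^D(-;L)\to \mathcal{G}_0\xrightarrow{\partial}\mathcal{G}_1\to 0.
\]

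I would verify surjectivity of $\partial$ stalkwise. At a point $c\in C$ not lying in $D$ the stalk of $\mathcal{G}_1$ vanishes, since each summand $(i_x)_*$ with $x\in D$ has trivial stalk away from $x$, so nothing is to be checked. At a point $c\in D$ the stalk of the whole sequence is, by the definition of the direct images, the two-term Rost--Schmid complex of the local scheme $\Spec\struct_{C,c}$ (its henselization $\Spec\struct_{C,c}^h$ in the Nisnevich case): the map $\partial$ becomes the boundary $M_0(\eta_c;L|_{\eta_c})\to M_{-1}(c;\Lambda_c^C\otimes L|_c)$, where $\eta_c$ is the generic point. By Lemma~\ref{lm:acyclic_resolution} this complex is a resolution of $M_0(\Spec\struct_{C,c};L)$, hence its boundary is surjective; this yields the displayed short exact sequence.

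Next I would feed this into the long exact cohomology sequence. By Lemma~\ref{lm:direct_image_acyclic} the sheaves $\mathcal{G}_0$ and $\mathcal{G}_1$, being finite direct sums of Nisnevich direct images from points ($C$ has finitely many generic points and $D$ is finite), satisfy $\coh^m_{\Nis}(C;\mathcal{G}_j)=\coh^m_{\Zar}(C;\mathcal{G}_j)=0$ for all $m>0$. Consequently $\coh^m(C;(M_0)_C^D(-;L))$ is squeezed between $\coh^{m-1}(\mathcal{G}_1)$ and $\coh^m(\mathcal{G}_0)$, both of which vanish once $m\ge 2$; this disposes of every degree $m\ge 2$ at once, in both topologies. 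What remains is the tail
\[
\coh^0(\mathcal{G}_0)\xrightarrow{\partial_*}\coh^0(\mathcal{G}_1)\to \coh^1(C;(M_0)_C^D(-;L))\to \coh^1(\mathcal{G}_0)=0,
\]
which identifies $\coh^1$ with $\coker\partial_*$.

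The hard part is therefore the surjectivity of $\partial_*$ on global sections. I would note that $\coh^0(\mathcal{G}_0)=\bigoplus_{x\in C^{(0)}}M_0(x;L|_x)$ and $\coh^0(\mathcal{G}_1)=\bigoplus_{c\in D}M_{-1}(c;\Lambda_c^C\otimes L|_c)$, and that $\partial_*$ is precisely the differential of the two-term complex $\RS^{\Spec\struct_{C,D}}(\Spec\struct_{C,D};L;M)$ attached to the semilocal scheme $\Spec\struct_{C,D}$, whose codimension-zero points are the generic points of $C$ and whose codimension-one points are exactly the points of $D$. Its cokernel is $\coh^1\bigl(\RS^{\Spec\struct_{C,D}}(\Spec\struct_{C,D};L;M)\bigr)$, which by Theorem~\ref{thm:Hom_coh} agrees with $\coh^1_{\Nis}(\Spec\struct_{C,D};M_0\otimes L)$; this vanishes because a semilocal scheme carries no higher Nisnevich (or Zariski) cohomology (Lemma~\ref{lm:coh_semiloc}). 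Hence $\partial_*$ is surjective, $\coker\partial_*=0$, and $\coh^1(C;(M_0)_C^D(-;L))=0$ as well, settling the Nisnevich and the Zariski statements simultaneously. The only genuinely non-formal inputs are the two surjectivity assertions, both instances of the Gersten/Rost--Schmid resolution being exact: locally via Lemma~\ref{lm:acyclic_resolution}, and semilocally via the vanishing of higher cohomology on $\Spec\struct_{C,D}$.
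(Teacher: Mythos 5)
Your proof is correct and follows essentially the same route as the paper: both exhibit $(M_0)_C^D(-;L)$ as the kernel of a surjection onto a sum of acyclic direct-image sheaves and reduce everything to the vanishing of $\coh^1$ of the two-term Rost--Schmid complex over the (semi)local schemes $\Spec\struct_{C,c}$ and $\Spec\struct_{C,D}$, via Lemmas~\ref{lm:RS_cohomology}, \ref{lm:direct_image_acyclic} and~\ref{lm:coh_semiloc}. The only difference is organizational: the paper verifies exactness of the sheaf sequence on sections over every \'etale $U\to C$ at once (which subsumes the global-sections surjectivity as the case $U=C$), whereas you check surjectivity stalkwise and then treat the global sections separately.
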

\begin{proof}
Let $U\to C$ be an etale morphism and put $D_U=U\times _C D$. We claim that the sequence
\[
0\to (M_0)_{C}^D(U;L)\to \bigoplus_{x\in U^{(0)}} M_0(x;L|_x) \xrightarrow{\partial} \bigoplus_{x\in D_U} M_{-1}(x;\Lambda_x^U\otimes L|_x) \to 0
\]
that arises from Definition~\ref{def:MCD} is exact. Indeed, by the definition $\ker \partial = (M_0)_{C}^D(U;L)$ while by Lemma~\ref{lm:RS_cohomology} we have $\coker \partial \cong \coh^1_{\Nis}(\Spec \struct_{U,D_U}; M_0\otimes L)$ and this group vanishes by Lemma~\ref{lm:coh_semiloc}.

Hence
\[
0\to (M_0)_{C}^D(-;L)\to \bigoplus_{x\in C^{(0)}} (i_x)_* M_0(-;L|_x) \xrightarrow{\partial} \bigoplus_{x\in D} (i_x)_* M_{-1}(-;\Lambda_x^C\otimes L|_x) \to 0
\]
is an exact sequence of sheaves. The claim follows from the associated long exact sequence of cohomology groups, Lemma~\ref{lm:direct_image_acyclic} and the aforementioned surjectivity on global sections.
\end{proof}

\begin{lemma} \label{lm:RSCD_res}
Let $L$ be a line bundle over a smooth curve $C$ and $D\subset C^{(1)}$ be a finite collection of closed points. Then for $M\in\heart$ the complex
\[
(M_0)_{C,D}(-;L)\to \RS^C(-,D;L;M)
\]
is a resolution in Nisnevich topology on the small site of $C$. Here the morphism
\[
(M_0)_{C,D}(-;L) \to \mathcal{RS}_0^C(-,D;L;M) = (M_0)_{C}^D(-; L)
\]
is induced by restriction homomorphisms
\[
M_0(U;L)\to M_0(\Spec \struct_{U,D_U};L) =(M_0)_{C}^D(U; L),
\]
where $U\to C$ is etale and $D_U=U\times_C D$.

Moreover, if for every closed point $x\in C$ the restriction homomorphism $M_0(\Spec \struct_{C,x})\to M_0(x)$ is surjective, then the complex is a resolution in Zariski topology.
\end{lemma}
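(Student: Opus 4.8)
The plan is to verify that the augmentation realizes $(M_0)_{C,D}(-;L)$ as the kernel of the differential of $\RS^C(-,D;L;M)$ and that this differential is an epimorphism of sheaves, i.e. that the augmented two-term complex $(M_0)_{C,D}(-;L)\to\RS^C(-,D;L;M)$ is exact. Exactness of a complex of sheaves can be tested on stalks, so I would compute the stalk of the sequence at each point of $C$. As $C$ is a curve its points are the generic point $\eta$ together with the closed points, and stalks in the Nisnevich (resp. Zariski) topology at a closed point $x$ are computed over the henselian local ring $\struct_{C,x}^h$ (resp. the local ring $\struct_{C,x}$), arising as filtered colimits over etale (resp. Zariski) neighborhoods; since filtered colimits are exact they commute with the kernels defining $(M_0)_{C,D}$ and $(M_0)_C^D$.

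At the generic point, and at a closed point $x\notin D$, one may shrink the neighborhood $U$ until $D_U=\emptyset$. Then $(M_0)_{C,D}(-;L)$ becomes $M_0(-;L)$, the sheaf $(M_0)_C^D(-;L)=\mathcal{RS}_0^C(-,D;L;M)$ becomes the generic-fiber sheaf $\bigoplus_{x\in C^{(0)}}(i_x)_*M_0(-;L|_x)$, and the $i_D^*$-component of the differential acts trivially on the stalk. The stalk sequence is therefore the stalk of the augmented Rost-Schmid complex $M_0(-;L)\to\RS^C(-;L;M)$: at $\eta$ it is the identity of $M_0(k(C);L)$ with zero target, and at $x\notin D$ it is
\[
0\to M_0(\struct_{C,x}^h;L)\to M_0(k(C);L)\xrightarrow{\partial} M_{-1}(x;\Lambda_x^C\otimes L|_x)\to 0 .
\]
Both are exact by Lemma~\ref{lm:acyclic_resolution}, which holds in the Zariski and Nisnevich topologies alike.

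At a closed point $x\in D$ every pushforward summand of the degree-one term except $(i_x)_*M_0(-;L|_x)$ has vanishing stalk, the $\partial$-component dies, and $(M_0)_C^D(-;L)$ has stalk $M_0(\struct_{C,x}^h;L)$ ($x$ being the only point of $D$ in a small enough neighborhood). The stalk sequence is thus
\[
0\to\ker(i_x^*)\to M_0(\struct_{C,x}^h;L)\xrightarrow{i_x^*} M_0(k(x);L|_x)\to 0 ,
\]
whose left term is, by definition and colimit-exactness, the stalk of $(M_0)_{C,D}(-;L)$. So everything comes down to surjectivity of the restriction $i_x^*$ to the closed point. Here I would exploit that $k$ is perfect: then $k(x)/k$ is finite separable, so Hensel's lemma in the henselian local ring $\struct_{C,x}^h$ lifts a primitive element and yields a $k$-algebra section $k(x)\hookrightarrow\struct_{C,x}^h$, that is, a retraction $r$ of the closed embedding $i_x$ with $r\circ i_x=\id$. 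Compatibly with the twist by $L$ via Lemma~\ref{lm:SL_oriented}, this gives $i_x^*\circ r^*=\id$, so $i_x^*$ is split surjective and the stalk sequence is exact. As exactness then holds at every stalk, the augmented complex is a resolution, proving the Nisnevich case.

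For the Zariski statement the same stalk-by-stalk analysis applies word for word with $\struct_{C,x}$ in place of $\struct_{C,x}^h$; the only ingredient not supplied by Lemma~\ref{lm:acyclic_resolution} is once more the surjectivity of $i_x^*\colon M_0(\struct_{C,x};L)\to M_0(k(x);L|_x)$ at the points $x\in D$, which is exactly the added hypothesis. I expect this surjectivity at the points of $D$ to be the crux of the argument: in the Nisnevich case it is automatic via the henselian retraction — and this is the sole place where perfectness of $k$ enters — whereas over the non-henselian ring $\struct_{C,x}$ a coefficient field need not exist, so the hypothesis genuinely cannot be dropped.
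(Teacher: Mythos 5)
Your proof is correct, and it reaches the same crux as the paper --- surjectivity of the restriction $M_0(\Spec\struct_{C,x}^h)\to M_0(x)$, obtained from a section of the closed embedding furnished by Hensel's lemma and separability of $k(x)/k$ in the Nisnevich case, and imposed as a hypothesis in the Zariski case --- but it gets there by a somewhat different mechanism. The paper does not split into cases: it localizes at a point $W=\Spec\struct_{C,x}$ or $\Spec\struct_{C,x}^h$, invokes Lemma~\ref{lm:RSCD_coh} to identify $\coh^m(\RS^C(W,D;L;M))$ with $[\Th(L|_W)/\Th(L|_{D_W}),M(1)[m+1]]$, and then reads off both the identification of $\coh^0$ with $(M_0)_{C,D}(W)$ and the vanishing of $\coh^1$ from the four-term exact sequence of the cofibration $\Th(L|_{D_W})\to\Th(L|_W)$ (all other terms dying because $M\in\heart$). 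You instead compute the stalks of each term of the complex directly, using exactness of filtered colimits and shrinking of neighborhoods, and reduce the generic point and the closed points outside $D$ to the absolute resolution of Lemma~\ref{lm:acyclic_resolution}, leaving only the points of $D$ to handle by hand. Your route is more elementary in that it bypasses the relative statement Lemma~\ref{lm:RSCD_coh} entirely (at the cost of a three-way case analysis and of re-deriving stalkwise what that lemma packages uniformly); the paper's route is more uniform and makes the compatibility of the augmentation with the restriction maps visible in one diagram. Two small imprecisions in your write-up, neither affecting correctness: at a closed point $x\notin D$ the Nisnevich stalk of the middle term is $M_0$ of the fraction field of $\struct_{C,x}^h$ rather than of $k(C)$ (harmless, since you are quoting Lemma~\ref{lm:acyclic_resolution} rather than recomputing it), and the phrase ``compatibly with the twist by $L$'' should really be ``after trivializing $L$ on the local scheme,'' which is how the paper reduces the twisted surjectivity to the untwisted one.
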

\begin{proof}
For a local scheme $W=\Spec \struct_{C,x}$ (or $W=\Spec \struct_{C,x}^h$) and $D_W=W\times_C D$ Lemma~\ref{lm:RSCD_coh} yields an isomorphism
\[
\coh^m(\RS^C(W,D;L;M))\cong [\Th(L|_W)/\Th(L|_{D_W}), M(1)[m+1]].
\]
The embedding $\Th(L|_{D_W})\to \Th(L|_{W})$ gives rise to an exact sequence
\begin{multline*}
0\to [\Th(L|_W)/\Th(L|_{D_W}), M(1)[1]]\to [\Th(L|_W), M(1)[1]] \to \\
\to [\Th(L|_{D_W}), M(1)[1]] \to [\Th(L|_W)/\Th(L|_{D_W}), M(1)[2]] \to 0
\end{multline*}
Here all the other terms of the long exact sequence vanish since $M\in \heart$. Thus 
\[
\coh^0(\RS^C(W,D;L;M))\cong (M_0)_{C,D}(W)
\]
and one can easily check that this isomorphism is compatible with the restriction homomorphism from the statement of the lemma.

It remains to show that $[\Th(L|_W), M(1)[1]] \to [\Th(L|_{D_W}), M(1)[1]]$ is surjective. Trivializing $L|_W$ we see that it is sufficient to show that the restriction homomorphism
\[
M_0(W) \to M_0(D_W)
\]
is surjective. The case when $D_W=\emptyset$ is trivial. Thus we may assume that $D_W=x$ is the closed point of $W=\Spec \struct_{C,x}$ (or $W=\Spec \struct_{C,x}^h$). The claim for the Zariski topology ($W=\Spec \struct_{C,x}$) follows from the assumption. For the Nisnevich topology one notices that $\Spec \struct_{C,x}^h\cong \Spec (\A^1_{k(x)})_{(0)}^h$, thus the embedding $x \to \Spec \struct_{C,x}^h$ has a section.
\end{proof}

\begin{theorem} \label{thm:Hom_coh_CD}
Let $E$ be a rank $r$ vector bundle over a smooth curve $C$ and let $D\subset C^{(1)}$ be a finite collection of closed points. Then for $M\in\heart$ there exist natural isomorphisms
\[
[\Th(E)/\Th(E|_D),M(r)[r+m]] \cong \coh^m(\RS^C(C,D;\det E;M))\cong \coh^m_{\Nis}(X;(M_0 \otimes \det E)_{C,D}).
\]

Moreover, if for every closed point $x\in C$ the restriction homomorphism $M_0(\Spec \struct_{C,x})\to M_0(x)$ is surjective, then
\[
\coh^m_{\Nis}(X;(M_0\otimes \det E)_{C,D}) \cong \coh^m_{\Zar}(X;(M_0\otimes \det E)_{C,D}).
\]
\end{theorem}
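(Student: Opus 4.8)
The plan is to assemble the preceding lemmas; essentially all of the content has already been established, and what remains is to match up the sheaf-level and group-level descriptions. The first isomorphism
\[
[\Th(E)/\Th(E|_D),M(r)[r+m]] \cong \coh^m(\RS^C(C,D;\det E;M))
\]
is exactly the statement of Lemma~\ref{lm:RSCD_coh}, so there is nothing to prove there. For the second isomorphism I would use the standard principle that an acyclic resolution computes sheaf cohomology. By Lemma~\ref{lm:RSCD_res} the augmentation $(M_0\otimes \det E)_{C,D}\to \RS^C(-,D;\det E;M)$ is a resolution of Nisnevich sheaves on the small site of $C$, where I invoke the canonical identification $(M_0)_{C,D}(-;\det E)\cong (M_0\otimes \det E)_{C,D}$ recorded in Definition~\ref{def:MCD}. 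It therefore suffices to verify that every term of this two-term resolution is Nisnevich-acyclic.

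There are two kinds of terms. The degree-zero sheaf is $(M_0)_C^D(-;\det E)$, whose positive Nisnevich (and Zariski) cohomology vanishes by Lemma~\ref{lm:MCD_acyclic}. The degree-one sheaf is a finite direct sum of Nisnevich direct images $(i_x)_* M_{-1}(-;\Lambda_x^C\otimes \det E|_x)$ and $(i_x)_* M_0(-;\det E|_x)$ along the embeddings $i_x\colon x\to C$, each acyclic by Lemma~\ref{lm:direct_image_acyclic}. Hence $\coh^m_\Nis(C;(M_0\otimes \det E)_{C,D})$ is computed by the complex of global sections $\Gamma(C;\RS^C(-,D;\det E;M))$. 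Reading off the global sections term by term from Definition~\ref{def:MCD}---using $(M_0)_C^D(C;\det E)=M_0(\Spec \struct_{C,D};\det E)$ and $\Gamma(C;(i_x)_*(-))=(-)(x)$---one recovers precisely the group-level complex
\[
M_0(\Spec \struct_{C,D};\det E)\xrightarrow{(\partial,i_D^*)} \bigoplus_{\substack{x\in C^{(1)}\\ x\not\in D}} M_{-1}(x;\Lambda_x^C\otimes \det E|_x)\oplus \bigoplus_{x\in D} M_0(x;\det E|_x)
\]
of Definition~\ref{def:RS_CD}, namely $\RS^C(C,D;\det E;M)$. This gives the second isomorphism, and combined with Lemma~\ref{lm:RSCD_coh} the full chain asserted in the first display. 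Naturality in all the relevant data is inherited from the functoriality of the restriction maps and of the coniveau/purity constructions used to build these complexes.

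For the final Zariski statement I would rerun this argument verbatim in the Zariski topology. Under the hypothesis that $M_0(\Spec \struct_{C,x})\to M_0(x)$ is surjective for every closed point $x$, the second half of Lemma~\ref{lm:RSCD_res} supplies the Zariski resolution $(M_0\otimes \det E)_{C,D}\to \RS^C(-,D;\det E;M)$, while Lemmas~\ref{lm:MCD_acyclic} and~\ref{lm:direct_image_acyclic} already provide Zariski-acyclicity of both terms. The complex of Zariski global sections is again $\RS^C(C,D;\det E;M)$, so $\coh^m_\Zar(C;(M_0\otimes \det E)_{C,D})\cong \coh^m(\RS^C(C,D;\det E;M))$, and comparison with the Nisnevich side yields the isomorphism $\coh^m_\Nis\cong \coh^m_\Zar$.

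Since all the technical input is already packaged in Lemmas~\ref{lm:RSCD_coh}, \ref{lm:MCD_acyclic}, \ref{lm:direct_image_acyclic} and~\ref{lm:RSCD_res}, the only genuine point of care is the bookkeeping in the second paragraph: one must check that the sheaf-theoretic augmentation and the differential $(\partial,i_D^*)$ specialize on global sections to the maps in $\RS^C(C,D;\det E;M)$, so that the two Rost-Schmid complexes literally agree rather than merely being abstractly quasi-isomorphic. This is routine once the definitions in Definitions~\ref{def:RS_CD} and~\ref{def:MCD} are placed side by side.
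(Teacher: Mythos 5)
Your proposal is correct and follows exactly the paper's own argument: the first isomorphism is Lemma~\ref{lm:RSCD_coh}, and the remaining ones come from the fact that $\RS^C(-,D;\det E;M)$ is an acyclic resolution of $(M_0\otimes\det E)_{C,D}$ by Lemmas~\ref{lm:direct_image_acyclic}, \ref{lm:MCD_acyclic} and~\ref{lm:RSCD_res}, with the Zariski case handled by the extra surjectivity hypothesis. The only slip is calling the degree-one term a \emph{finite} direct sum (it runs over all closed points of $C$ not in $D$), but this does not affect the argument.
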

\begin{proof}
The first isomorphism follows from Lemma~\ref{lm:RSCD_coh}. The other isomorphisms follow from the fact that $\RS^C(-,D;\det E;M)$ provides an acyclic resolution for $(M_0\otimes \det E)_{C,D}$ by Lemmas~\ref{lm:direct_image_acyclic},~\ref{lm:MCD_acyclic} and~\ref{lm:RSCD_res}.
\end{proof}

\begin{lemma} \label{lm:fibration_RS_CD}
Let $C$ be a smooth curve, $D\subset C^{(1)}$ be a finite collection of closed points, $\rho\colon \widetilde{C}\to C$ be a Zariski locally trivial $\A^s$-fibration, $E$ be a rank $r$ vector bundle over $\widetilde{C}$, $L$ be a line bundle over $C$ and $\theta\colon\det E\cong \rho^*L$ be an isomorphism. Put $\widetilde{D}=\rho^{-1}(D)$. Then for every $M\in\heart$ there exists an isomorphism
\[
\Theta\colon \coh^1(\RS^C(C,D;L;M)) \xrightarrow{\simeq} [\Th(E)/\Th(E|_{\widetilde{D}}),M(r)[r+1]]
\]
such that for every rational point $y\in C(k), y\not\in D,$ the following diagram commutes.
\[
\xymatrix{
	\pi_0^{\A^1}(M)_{-1}(\widetilde{y};\Lambda_{\widetilde{y}}^E) \ar[r]^(0.45){\phi}_(0.45)\simeq \ar[d]_{\Psi}^\cong &  M_{-1}(y; \Lambda_y^C\otimes L_y) \ar[r]^{i_y}  & \coh^1(\RS^C(C,D;L;M)) \ar[d]^\cong_\Theta \\
	\pi_0^{\A^1}(M)_{-1}(\widetilde{y};N_{\widetilde{y}/E}) \ar[r]^(0.4){d}_(0.4)\simeq  &  [E/(E-\widetilde{y}),M(r)[r+1]] \ar[r]^(0.45)Q &  [\Th(E)/\Th(E|_{\widetilde{D}}),M(r)[r+1]]
}
\]
Here
\begin{itemize}
	\item
	$\widetilde{y}=z\circ \rho^{-1}(y)\subset E$ for the zero section $z\colon \widetilde{C}\to E$,
	\item
	$i_y$ is induced by the inclusion $M_{-1}(y; \Lambda_y^C\otimes L_y)\to \bigoplus\limits_{\substack{x\in C^{(1)},\\ x\not\in D}} M_{-1}(x; \Lambda_x^C\otimes L_x)$ to the summand corresponding to the point $y$,
	\item
	$Q$ is induced by the quotient $\Th(E)/\Th(E|_{\widetilde{D}})\to E/(E-\widetilde{y})$,
	\item
	$d$ is induced by the homotopy purity theorem,
	\item
	$\Psi$ is given by Lemma~\ref{lm:SL_oriented},
	\item
	$\phi$ is induced by projection $\rho\colon \widetilde{y} \to y$ and isomorphisms $\Lambda^E_{\widetilde{y}}\cong \Lambda^{\widetilde{C}}_{\widetilde{y}}\otimes \det E|_{\widetilde{y}} \cong \rho^*(\Lambda_y^C\otimes L_y)$.
\end{itemize}
\end{lemma}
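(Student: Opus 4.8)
The plan is to run the proof of Lemma~\ref{lm:fibration_RS} relative to $D$, using Lemma~\ref{lm:RSCD_coh} in place of Lemma~\ref{lm:RS_cohomology}. First I would set up the filtration of $E$ pulled back from the pair $(C,D)$: following Definition~\ref{def:RS_CD}, for an open subset $D\subset U_\alpha\subset C$ I put $\widetilde{U}_\alpha=\rho^{-1}(U_\alpha)$ and $U_\alpha^E=\pi^{-1}(\widetilde{U}_\alpha)\cup(E-z(\widetilde{C}))$, obtaining a filtration $E-z(\widetilde{C})\subset U_\alpha^E\subset E$ whose closed complement $E-U_\alpha^E=z(\rho^{-1}(C-U_\alpha))$ is a disjoint union of the affine-space fibres $\widetilde{x}=z\circ\rho^{-1}(x)$ over the finitely many closed points $x\in C-U_\alpha$, all lying outside $D$. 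Taking the colimit over $U_\alpha$ of the long exact sequences attached to this filtration and applying homotopy purity exactly as in Definition~\ref{def:RS_CD} produces the colimit long exact sequence whose terms I identify next.

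The collapse is where the fibration hypothesis enters. Since each fibre $\rho^{-1}(x)\cong\A^s_{k(x)}$ is $\A^1$-equivalent to $x$ and every vector bundle over an affine space is trivial, Lemma~\ref{lm:SL_oriented} together with $M\in\heart$ and the determinant identification $\det N_{\widetilde{x}/E}\cong\rho^*(\Lambda_x^C\otimes L|_x)$ of Lemma~\ref{lm:fibration_RS} forces the closed-point contributions $\pi^{\A^1}_{1-m}(M)_{-1}(\widetilde{x};N_{\widetilde{x}/E})\cong\pi^{\A^1}_{1-m}(M)_{-1}(x;\Lambda_x^C\otimes L|_x)$ to vanish unless $m=1$, while the semilocal term $[\Th(E|_{\rho^{-1}(\Spec\struct_{C,D})}),M(r)[r+m]]\cong\pi^{\A^1}_{-m}(M)_0(\Spec\struct_{C,D};L)$ survives only for $m=0$ by Lemma~\ref{lm:coh_semiloc}. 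This yields the exact sequence
\begin{multline*}
0 \to [\Th(E), M(r)[r]] \to M_0(\Spec \struct_{C,D}; L) \xrightarrow{\partial} \\
\bigoplus_{x \notin D} M_{-1}(x; \Lambda_x^C \otimes L|_x) \to [\Th(E), M(r)[r+1]] \to 0,
\end{multline*}
the fibration analogue of the sequence appearing in the proof of Lemma~\ref{lm:RSCD_coh}.

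Finally I would pass from $\Th(E)$ to $\Th(E)/\Th(E|_{\widetilde{D}})$ and from the unmodified to the modified complex. As $\widetilde{D}=\rho^{-1}(D)$ is again a disjoint union of affine-space fibres, the same collapse gives $[\Th(E|_{\widetilde{D}}),M(r)[r+m]]\cong\bigoplus_{x\in D}M_0(x;L|_x)$ for $m=0$ and $0$ otherwise. Feeding this and the displayed sequence into the cofibre sequence $\Th(E|_{\widetilde{D}})\to\Th(E)\to\Th(E)/\Th(E|_{\widetilde{D}})$ and running the two diagram chases (for $m=0$ and $m=1$) exactly as in the proof of Lemma~\ref{lm:RSCD_coh} produces the isomorphism $\Theta$; here the component $i_D^*$ of the differential $(\partial,i_D^*)$ of $\RS^C(C,D;L;M)$ is precisely the restriction $M_0(\Spec\struct_{C,D};L)\to\bigoplus_{x\in D}M_0(x;L|_x)$ induced by $\Th(E|_{\widetilde{D}})\to\Th(E)$.

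For the commutativity of the diagram I would trace a class through this construction on the single summand indexed by a rational point $y\notin D$. By construction $\Theta$ is assembled from the purity isomorphism $d$ and the quotient $Q$ on that summand, whereas $\Psi$ and $\phi$ are exactly the orientation isomorphism of Lemma~\ref{lm:SL_oriented} and the projection $\rho\colon\widetilde{y}\to y$ used in the collapse; hence $\Theta\circ i_y\circ\phi=Q\circ d\circ\Psi$ holds summand-wise, just as the corresponding square commutes in Lemma~\ref{lm:fibration_RS}. I expect the \emph{main obstacle} to be precisely this bookkeeping: keeping the three twists $\Lambda_{\widetilde{y}}^E=\det N_{\widetilde{y}/E}$, $N_{\widetilde{y}/E}$ and $\Lambda_y^C\otimes L|_y$ and their canonical comparison isomorphisms consistent across the purity, orientation and projection maps, so that the summand-wise identification is strictly compatible with the differential $(\partial,i_D^*)$ and with the passage to $\coh^1$.
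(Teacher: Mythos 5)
Your proposal is correct and follows exactly the route the paper intends: its proof of Lemma~\ref{lm:fibration_RS_CD} consists of the single remark that it is ``similar to the one given for Lemma~\ref{lm:fibration_RS}'', and what you have written is precisely that adaptation — the Jouanolou-type collapse of the coniveau sequence from the proof of Lemma~\ref{lm:fibration_RS} combined with the relative $(C,D)$ diagram chases from the proof of Lemma~\ref{lm:RSCD_coh}. You have in fact supplied more detail than the paper does, and the bookkeeping of twists you flag as the main obstacle is handled the same way as in Lemma~\ref{lm:fibration_RS}.
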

\begin{proof}
The proof is similar to the one given for Lemma~\ref{lm:fibration_RS}.
\end{proof}

\section{The dual spectrum and the zeroth stable homotopy group}

In this section we apply the Atiyah duality, basic properties of homotopy $t$-structure and developed techniques in order to obtain a cohomological description of the zeroth stable motivic homotopy groups.

\begin{definition}
For $A\in \SH(k)$ denote $A^\vee=\iHom_{\SH(k)}(A,\SSp)$ the \textit{dual spectrum}.
\end{definition}

\begin{definition} \label{def:Thom_stable_normal}
Let $X$ be a smooth variety. By Jouanolou device (see \cite{J73}, \cite[\S~4]{We89}) there is a morphism $\rho\colon \widetilde{X}\to X$ such that $\widetilde{X}$ is affine and $\rho$ is a locally trivial $\A^s$-bundle in Zariski topology. For the tangent bundle $T_X$ choose an isomorphism $\rho^*T_X\oplus E\cong \triv^N_{\widetilde{X}}$. Put
\[
\Th(-T_X)=\Sigma_T^{-N}\Sigma^\infty_\T \Th(E).
\]
One can show that up to a canonical isomorphism $\Th(-T_X)$ does not depend on the choices made (see also \cite[Section~3]{H05}).
\end{definition}

\begin{theorem}[{\cite[Theorem~A.1]{H05}, \cite[{Corollary~6.13}]{H15}}] \label{thm:Atiyah_duality}
Let $X$ be a smooth projective variety. Then $\Sigma_\T^\infty X_+$ is strongly dualizable and there exists a canonical isomorphism
\[
(\Sigma_\T^\infty X_+)^\vee \cong \Th(-T_X)
\]
that gives rise to the evaluation isomorphism
\[
[\SSp, \Sigma_\T^\infty X_+ (l)]\cong [\Th(-T_X),\SSp(l)].
\]
\end{theorem}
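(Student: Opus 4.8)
The plan is to deduce the statement from the six-functor formalism for motivic spectra (Ayoub, Cisinski--Déglise) applied to the structure morphism $f\colon X\to \Spec k$, which is smooth and proper; at the end I indicate the more hands-on geometric argument of Hu that stays closer to the techniques of this paper. The three inputs I would use are: the smooth projection formula $C\otimes f_\#A\cong f_\#(f^*C\otimes A)$ together with the adjunction $(f_\#,f^*)$ (here $f_\#$ is the left adjoint of $f^*$, so that $\Sigma^\infty_\T X_+\cong f_\#\triv_X$); proper base change in the form $f_!\cong f_*$; and relative purity $f^!(-)\cong f^*(-)\otimes \Th(T_f)$ for the relative tangent bundle $T_f=T_X$.

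First I would compute the dual. From the adjunction and the projection formula one gets, for any $A$ over $X$ and any $B$ over $\Spec k$, the identity $\iHom_{\SH(k)}(f_\#A,B)\cong f_*\iHom(A,f^*B)$; indeed, testing against an arbitrary $C$ and using $C\otimes f_\#A\cong f_\#(f^*C\otimes A)$ reduces it to the pair of adjunctions $(f_\#,f^*)$ and $(f^*,f_*)$. Taking $A=\triv_X$ and $B=\SSp$ yields
\[
(\Sigma^\infty_\T X_+)^\vee=\iHom_{\SH(k)}(f_\#\triv_X,\SSp)\cong f_*\,\iHom(\triv_X,\triv_X)\cong f_*\triv_X.
\]
Since $f$ is proper, $f_*\triv_X\cong f_!\triv_X$. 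Finally relative purity gives $\triv_X\cong f^!\SSp\otimes \Th(-T_f)$, and the smooth projection formula in the form $f_\#(P)\cong f_!(P\otimes \Th(T_f))$ then yields $f_!\triv_X\cong f_\#\Th(-T_f)$, which is exactly $\Th(-T_X)$ in the sense of Definition~\ref{def:Thom_stable_normal}. Strong dualizability of $\Sigma^\infty_\T X_+$ is part of the same package: for smooth proper $f$ the object $f_\#\triv_X$ is dualizable with dual $f_!\triv_X$, the evaluation and coevaluation being the counit $f_!f^!\to\id$ and the unit $\id\to f_*f^*$ transported through the purity identification.

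Granting strong dualizability, the evaluation isomorphism is formal. In any symmetric monoidal category a strongly dualizable object $A$ satisfies $[\SSp,A\otimes B]\cong[A^\vee,B]$ naturally in $B$. Applying this with $A=\Sigma^\infty_\T X_+$ and $B=\SSp(l)$, and using the identification of $A^\vee$ just obtained, gives
\[
[\SSp,\Sigma^\infty_\T X_+(l)]\cong[(\Sigma^\infty_\T X_+)^\vee,\SSp(l)]\cong[\Th(-T_X),\SSp(l)],
\]
as claimed. The main obstacle in this route is not any single computation but the importation of the full six-functor formalism---proper base change and especially relative purity---which the present paper otherwise takes pains to avoid.

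For this reason I would alternatively sketch Hu's geometric argument, which uses only homotopy purity and is self-contained. Replace $X$ by the affine Jouanolou model $\rho\colon\widetilde X\to X$ of Definition~\ref{def:Thom_stable_normal}, so that $\Sigma^\infty_\T\widetilde X_+\simeq\Sigma^\infty_\T X_+$, choose a closed embedding $\widetilde X\hookrightarrow\A^N$ with normal bundle $\nu$, and build the coevaluation from the Pontryagin--Thom collapse $\Sigma^\infty_\T\A^N_+\to\Sigma^\infty_\T\bigl(\A^N/(\A^N\setminus\widetilde X)\bigr)\simeq\Th(\nu)$ provided by homotopy purity \cite[Theorem~2.23]{MV99}, followed by the Thom diagonal $\Th(\nu)\to\Sigma^\infty_\T\widetilde X_+\wedge\Th(\nu)$; the evaluation comes from the diagonal $\widetilde X\to\widetilde X\times\widetilde X$, whose normal bundle is the tangent bundle, together with the Thom isomorphism for $\nu\oplus T_{\widetilde X}\cong\triv^N$. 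Here the hard part---and the reason properness cannot be dispensed with---is twofold: verifying the two triangle identities by an explicit $\A^1$-homotopy arising from the linear scaling in the deformation to the normal cone, and checking that this makes the stable normal bundle of $X$ (through $\rho^*T_X$) rather than that of the non-proper scheme $\widetilde X$ the one controlling the dual, so that the answer is $\Th(-T_X)$ as in Definition~\ref{def:Thom_stable_normal}.
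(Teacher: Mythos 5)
The paper does not prove this theorem; it imports it verbatim from the cited sources, and your proposal correctly reconstructs both of those proofs: the six-operations argument (projection formula, $f_!\cong f_*$ for proper $f$, relative purity $f_\#\cong f_!(-\otimes\Th(T_f))$) is precisely the proof of \cite[Corollary~6.13]{H15}, and the Pontryagin--Thom/Jouanolou sketch is \cite[Theorem~A.1]{H05}. So you are taking essentially the same route as the paper's references, and both computations check out ($\iHom(f_\#\triv_X,\SSp)\cong f_*\triv_X\cong f_!\triv_X\cong f_\#\Th(-T_f)=\Th(-T_X)$, with the evaluation isomorphism then formal from strong dualizability). One remark worth making: the paper later needs more than the bare isomorphism --- Lemma~\ref{lm:dual_point} and the proof of Theorem~\ref{thm:hom_curve} extract from the \emph{proofs} of \cite{H05,H15} the explicit geometric description of the duality (the collapse map through $E/(E-z(\widetilde{x}))$ and the identification of $(\Sigma_\T^\infty\Th((T_C)|_D))^\vee$), so your second, geometric sketch is not merely an alternative but is the form of the argument actually used downstream; the formal six-functor version alone would not suffice to identify $\Sigma^\infty_\T(i_x)_+$ with the class $\langle 1\rangle_x$ in the Rost--Schmid complex.
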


\begin{remark} \label{rem:U_dualizable}
Let $\mathcal{V}$ be a vector bundle over a smooth projective variety $X$. Then the following generalization of Theorem~\ref{thm:Atiyah_duality} holds (see {\cite[Remark~1]{H05}} or the proof of \cite[{Corollary~6.13}]{H15}): $\Sigma_\T^\infty \Th(\mathcal{V})$ is strongly dualizable and 
\[
(\Sigma_\T^\infty \Th(\mathcal{V}))^\vee\cong \Th(-(T_X\oplus \mathcal{V}))
\]
with $\Th(-(T_X\oplus \mathcal{V}))$ defined along the lines of Definition~\ref{def:Thom_stable_normal}. Thus for a smooth closed subvariety $Z$ of a smooth projective variety $X$ the triangle
\[
\Sigma_\T^\infty (X-Z)_+\to \Sigma_\T^\infty X_+ \to \Sigma_\T^\infty \Th(N_{Z/X}) \to \Sigma_\T^\infty (X-Z)_+[1]
\]
together with the fact that the subcategory of strongly dualizable objects is triangulated yields that $X-Z$ is strongly dualizable.
\end{remark}

\begin{lemma} \label{lm:dual_point}
In the notation of Definition~\ref{def:Thom_stable_normal} and Theorem~\ref{thm:Atiyah_duality} let $i_x\colon \Spec k\to X$ be a rational point. Then under the isomorphism 
\[
[\SSp, \Sigma_\T^\infty X_+]\cong [\Th(-T_X),\SSp]
\]
morphism
\[
\Sigma^\infty_T (i_x)_+\colon \SSp \to \Sigma_T^\infty X_+
\]
corresponds to the composition
\begin{multline*}
\Th(-T_X)=\Sigma_T^{-N}\Sigma^\infty_\T \Th(E)\to \Sigma_T^{-N}\Sigma^\infty_\T E/(E-z(\widetilde{x}))\xrightarrow{\simeq} \\
\xrightarrow{\simeq} \Sigma_T^{-N}\Sigma^\infty_\T \Th(E|_{z(\widetilde{x})}\oplus N_{\widetilde{x}/\widetilde{X}})\xrightarrow{\simeq}  \Sigma_T^{-N}\Sigma^\infty_\T \Th(\triv^N_{z(\widetilde{x})}) \xrightarrow{\simeq} \SSp.
\end{multline*}
Here
\begin{itemize}
	\item
	$z\colon \widetilde{X}\to E$ is the zero section;
	\item
	$\widetilde{x}=\rho^{-1}(x)$ for the rational point $x=i_x(\Spec k)$ and projection $\rho\colon \widetilde{X}\to X$;
	\item
	the first morphism is given by the quotient map;
	\item
	the second morphism is given by the homotopy purity theorem;
	\item
	the third morphism is given by the isomorphisms $N_{\widetilde{x}/\widetilde{X}}\cong (\rho^*T_X)|_{\widetilde{x}}$ and $E\oplus \rho^*T_X\cong \triv^N_{\widetilde{X}}$,
	\item
	the last isomorphism is given by the projection $\A^s\cong \widetilde{x}\to \Spec k$.
\end{itemize}
\end{lemma}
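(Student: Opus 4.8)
The assertion is that, under Atiyah duality, the dual of a point inclusion is the associated Pontryagin--Thom collapse. The plan is to unwind the construction of the duality pairing from Theorem~\ref{thm:Atiyah_duality} and to match the dual morphism with the displayed composition by naturality of homotopy purity, together with the two instances of the stable trivialization $\rho^*T_X\oplus E\cong\triv^N_{\widetilde{X}}$ that appear on the two sides. First I would record the formal mechanism. Write $f=\Sigma^\infty_\T(i_x)_+\colon\SSp\to\Sigma^\infty_\T X_+$, and let $\eta\colon\SSp\to\Sigma^\infty_\T X_+\wedge\Th(-T_X)$ and $\epsilon\colon\Th(-T_X)\wedge\Sigma^\infty_\T X_+\to\SSp$ be the coevaluation and evaluation exhibiting the duality $(\Sigma^\infty_\T X_+)^\vee\cong\Th(-T_X)$. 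The evaluation isomorphism carries $f$ to $f^\vee=\epsilon\circ(\id_{\Th(-T_X)}\wedge f)$. Since the duality adjunction is a bijection, it is equivalent, and more convenient, to check that the displayed composition, call it $g\colon\Th(-T_X)\to\SSp$, satisfies $(\id_{\Sigma^\infty_\T X_+}\wedge g)\circ\eta=f$.

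Next I would make both sides geometric. Concretely $\eta$ is assembled, after the shift $\Sigma_T^{-N}$, from a Pontryagin--Thom collapse together with a diagonal: the graph $\Gamma_\rho\colon\widetilde{X}\hookrightarrow X\times\widetilde{X}$ is a closed embedding with normal bundle $\rho^*T_X$, and combined with $E$ and the trivialization $\rho^*T_X\oplus E\cong\triv^N_{\widetilde{X}}$ it produces the desuspended coevaluation $\T^N\to X_+\wedge\Th(E)$, in which the $\Sigma^\infty_\T X_+$--factor records, through $\rho$, the image point and the $\Th(E)$--factor carries the trivialized Thom data. On the other side, $g$ is by definition the composite collapsing $\Th(E)\to E/(E-z(\widetilde{x}))$ onto the fibre over $\widetilde{x}=\rho^{-1}(x)$, applying homotopy purity to identify $E/(E-z(\widetilde{x}))\cong\Th(E|_{\widetilde{x}}\oplus N_{\widetilde{x}/\widetilde{X}})$, using $N_{\widetilde{x}/\widetilde{X}}\cong(\rho^*T_X)|_{\widetilde{x}}$ together with the same trivialization to reach $\Th(\triv^N_{\widetilde{x}})$, and finally collapsing the fibre $\widetilde{x}\cong\A^s$ to $\Spec k$.

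Post-composing $\eta$ with $\id\wedge g$ therefore restricts the collapse defining $\eta$ to the fibre over $\widetilde{x}$. By functoriality of the Pontryagin--Thom construction the $\Sigma^\infty_\T X_+$--coordinate is then pinned to the single point $\rho(\widetilde{x})=x$, so the composite factors through $\Sigma^\infty_\T\{x\}_+$; the residual self-map of the sphere is the identity of $\T^N$ after desuspension, because the two trivializations $\rho^*T_X\oplus E\cong\triv^N_{\widetilde{X}}$ used in $\eta$ and in $g$ are restrictions of the same isomorphism over $\widetilde{x}$. This identifies $(\id\wedge g)\circ\eta$ with $\Sigma^\infty_\T(i_x)_+$, as required.

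The crux is precisely this compatibility of the two Thom/purity identifications: the one implicit in the coevaluation and the explicit ones in $g$ both rest on $\rho^*T_X\oplus E\cong\triv^N_{\widetilde{X}}$ and on $N_{\widetilde{x}/\widetilde{X}}\cong(\rho^*T_X)|_{\widetilde{x}}$, and I expect the main obstacle to be verifying that these two uses cancel cleanly, so that restricting the collapse over the $\A^1$--contractible fibre $\widetilde{x}\cong\A^s$ yields the identity on the $\T^N$--factor with no spurious automorphism twist, rather than merely an equality up to a unit. A useful cross-check that forces the normalization is the general naturality of Atiyah duality under a closed embedding $i\colon Z\hookrightarrow X$ of smooth projective varieties, under which $\Sigma^\infty_\T i_+$ dualizes to the Gysin collapse $\Th(-T_X)\to\Th(-T_Z)$; specializing to $Z=\Spec k$ gives $\Th(-T_Z)\cong\SSp$ and recovers the displayed map.
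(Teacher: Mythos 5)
Your proposal is correct and matches the paper's approach: the paper's proof is simply the citation ``follows from the proofs of \cite[Theorem~A.1]{H05} or \cite[Corollary~6.13]{H15},'' i.e.\ unwind the explicit Pontryagin--Thom construction of the (co)evaluation maps, which is exactly what you do, identifying $f^\vee$ via $(\id\wedge g)\circ\eta=f$ and checking compatibility of the two uses of the trivialization $\rho^*T_X\oplus E\cong\triv^N_{\widetilde{X}}$. You supply considerably more detail than the paper does, and you correctly flag the normalization of the Thom/purity identifications over $\widetilde{x}$ as the only point requiring care.
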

\begin{proof}
Follows from the proofs of {\cite[Theorem~A.1]{H05}} or \cite[{Corollary~6.13}]{H15}.
\end{proof}

\begin{lemma} \label{lm:dual_coh_dim}
Let $\mathcal{X}\in\SH(k)_{t\ge 0}$ be a strongly dualizable object. Then $\mathcal{X}^\vee \in {}^\perp\SH(k)_{t\ge 1}$.
\end{lemma}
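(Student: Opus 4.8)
The plan is to translate the orthogonality statement, via Spanier--Whitehead duality, into a connectivity statement for the smash product, and then evaluate over the base point. Unwinding the definition, $\mathcal{X}^\vee\in{}^\perp\SH(k)_{t\ge 1}$ means precisely that $[\mathcal{X}^\vee,B]=0$ for every $B\in\SH(k)_{t\ge 1}$. First I would use that $\mathcal{X}$, and hence $\mathcal{X}^\vee$, is strongly dualizable with $(\mathcal{X}^\vee)^\vee\cong\mathcal{X}$. The unit--internal-hom adjunction together with the identification $\iHom(\mathcal{X}^\vee,B)\cong(\mathcal{X}^\vee)^\vee\otimes B\cong\mathcal{X}\otimes B$ then gives a natural isomorphism
\[
[\mathcal{X}^\vee,B]\cong[\SSp,\iHom(\mathcal{X}^\vee,B)]\cong[\SSp,\mathcal{X}\otimes B].
\]
Thus it is enough to prove $[\SSp,\mathcal{X}\otimes B]=0$.

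The heart of the argument is the claim that $\mathcal{X}\otimes B\in\SH(k)_{t\ge 1}$, i.e.\ that the smash product is right $t$-exact for the homotopy $t$-structure. Writing $B=B'[1]$ with $B'\in\SH(k)_{t\ge 0}$ and using $\mathcal{X}\otimes B\cong(\mathcal{X}\otimes B')[1]$, this reduces to showing that $\SH(k)_{t\ge 0}$ is closed under $\otimes$. The subcategory $\SH(k)_{t\ge 0}$ is closed under colimits, extensions and $\Gm$-twists; the last point holds because the twist $(q)$ only shifts the weight index, $\underline{\pi}^{\A^1}_i(A(q))_n\cong\underline{\pi}^{\A^1}_i(A)_{n+q}$, and leaves the degree $i$ unchanged. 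By Morel's connectivity theorem the suspension spectra $\Sigma^\infty_\T X_+(q)[m]$ with $X\in\Sm_k$, $q\in\Z$ and $m\ge 0$ lie in $\SH(k)_{t\ge 0}$ and generate it under these operations. Since
\[
\bigl(\Sigma^\infty_\T X_+(q)[m]\bigr)\otimes\bigl(\Sigma^\infty_\T Y_+(q')[m']\bigr)\cong\Sigma^\infty_\T (X\times Y)_+(q+q')[m+m']
\]
again lies in $\SH(k)_{t\ge 0}$ when $m,m'\ge 0$, and $\otimes$ preserves colimits, extensions and twists, the closure follows. Alternatively, since $\mathcal{X}$ is strongly dualizable and connective it is a retract of a finite cell object whose cells sit in non-negative degrees, so $\mathcal{X}\otimes B$ is a retract of a finite iterated extension of objects $B(q_i)[m_i]$ with $m_i\ge 0$, each lying in $\SH(k)_{t\ge 1}$.

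To finish, I would show that $[\SSp,C]=0$ for every $C\in\SH(k)_{t\ge 1}$ and apply this to $C=\mathcal{X}\otimes B$. Here $[\SSp,C]=[\Spec k,C]$, and the Postnikov tower of $C$ in the homotopy $t$-structure has layers $\Ht^{\mathrm{t}}_n(C)[n]$ with $\Ht^{\mathrm{t}}_n(C)\in\heart$ and $\Ht^{\mathrm{t}}_n(C)=0$ for $n\le 0$. By Theorem~\ref{thm:Hom_coh} the layer $\Ht^{\mathrm{t}}_n(C)[n]$ contributes $\coh^{n+j}_\Nis(\Spec k,\Ht^{\mathrm{t}}_n(C)_0)$ to $[\Spec k,C[j]]$, and since $\Spec k$ has Nisnevich cohomological dimension $0$ this is nonzero only when $n+j=0$; the associated spectral sequence therefore degenerates, and the only term bearing on $[\Spec k,C]=[\Spec k,C[0]]$ is $\Ht^{\mathrm{t}}_0(C)_0(k)=0$. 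Hence $[\Spec k,C]=0$, which completes the proof.

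The main obstacle is the connectivity step of the second paragraph. It rests on Morel's connectivity theorem and on the fact that $\SH(k)_{t\ge 0}$ is exactly the subcategory generated by the connective suspension spectra under colimits, extensions and $\Gm$-twists, so that the good behaviour of $\otimes$ on generators propagates to all of $\SH(k)_{t\ge 0}$; the finite-cell variant packages the same input more economically for a single strongly dualizable $\mathcal{X}$. I note that no coconnectivity of $\mathcal{X}^\vee$ is being asserted: ${}^\perp\SH(k)_{t\ge 1}$ is strictly larger than $\SH(k)_{t\le 0}$, and indeed already $\SSp=\SSp^\vee$ lies in it even though $\SSp\notin\SH(k)_{t\le 0}$, consistent with the case $\mathcal{X}=\SSp$ of the lemma.
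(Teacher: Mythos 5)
Your proof is correct and follows the same route as the paper: strong dualizability gives $[\mathcal{X}^\vee,B]\cong[\SSp,\mathcal{X}\wedge B]$, and the vanishing then follows from the fact that smashing with a connective object preserves $\SH(k)_{t\ge 1}$. The only difference is that you prove this last connectivity statement by a generators-of-the-$t$-structure argument, whereas the paper simply cites it (Lemma~3.2 of [ALP15]); your argument is essentially the proof of that cited lemma.
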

\begin{proof}
For $A\in \SH(k)_{t\ge 1}$ we have
\[
[\mathcal{X}^\vee,A]\cong [\SSp,\mathcal{X}\wedge A].
\]
The last group vanishes since $\mathcal{X}\wedge A\in \SH(k)_{t\ge 1}$ by \cite[Lemma~3.2]{ALP15}.
\end{proof}

\begin{lemma} \label{lm:cohomotopy-cohomology}
For $\mathcal{Y}\in {}^\perp\SH(k)_{t\ge 1}$ and $A\in \SH(k)_{t\ge 0}$ the canonical morphism $A\to \Ht^{\mathrm{t}}_0(A)$ induces an isomorphism
\[
[\mathcal{Y},A]\xrightarrow{\simeq} [\mathcal{Y},\Ht^{\mathrm{t}}_0(A)].
\] 
\end{lemma}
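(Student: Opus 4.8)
The plan is to realize $\Ht^\mathrm{t}_0(A)$ as the cofiber of the connectivity truncation of $A$ and then feed the resulting distinguished triangle into the cohomological functor $[\mathcal{Y},-]$, using the defining orthogonality of $\mathcal{Y}$ to annihilate the two error terms. First I would record that since $A\in\SH(k)_{t\ge 0}$, the connective cover map $A_{t\ge 0}\to A$ from the canonical filtration is an isomorphism. Consequently the defining triangle
\[
A_{t\ge 1}\to A_{t\ge 0}\to \Ht^\mathrm{t}_0(A)\to A_{t\ge 1}[1]
\]
may be rewritten, after identifying $A_{t\ge 0}$ with $A$, as
\[
A_{t\ge 1}\to A\to \Ht^\mathrm{t}_0(A)\to A_{t\ge 1}[1],
\]
and the middle arrow is exactly the canonical morphism $A\to\Ht^\mathrm{t}_0(A)$ appearing in the statement.

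Next I would apply $[\mathcal{Y},-]$ to this triangle to obtain the exact sequence
\[
[\mathcal{Y},A_{t\ge 1}]\to [\mathcal{Y},A]\to [\mathcal{Y},\Ht^\mathrm{t}_0(A)]\to [\mathcal{Y},A_{t\ge 1}[1]].
\]
The crux is the vanishing of the two outer groups. By construction $A_{t\ge 1}\in\SH(k)_{t\ge 1}$, so $[\mathcal{Y},A_{t\ge 1}]=0$ directly from the hypothesis $\mathcal{Y}\in{}^\perp\SH(k)_{t\ge 1}$. For the right-hand term I would observe that $A_{t\ge 1}[1]\in\SH(k)_{t\ge 2}$, together with the nesting $\SH(k)_{t\ge 2}\subset\SH(k)_{t\ge 1}$ of the subcategories of the homotopy $t$-structure (an object whose homotopy sheaves are concentrated in degrees $\ge 2$ a fortiori has them concentrated in degrees $\ge 1$). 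Hence $A_{t\ge 1}[1]$ also lies in $\SH(k)_{t\ge 1}$ and is annihilated by $\mathcal{Y}$ for the same reason, giving $[\mathcal{Y},A_{t\ge 1}[1]]=0$.

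With both outer terms killed, the exact sequence collapses to the assertion that the middle arrow $[\mathcal{Y},A]\to[\mathcal{Y},\Ht^\mathrm{t}_0(A)]$ is an isomorphism, which is precisely the claim. I do not expect a genuine obstacle: the argument is formal once the triangle is in hand. The only points that require care are the identification $A_{t\ge 0}\cong A$ for $A\in\SH(k)_{t\ge 0}$, which is what lets the cofiber triangle be expressed in terms of $A$ itself and guarantees its middle map is the canonical morphism of the statement, and the elementary nesting $\SH(k)_{t\ge 2}\subset\SH(k)_{t\ge 1}$ that allows the shifted connective cover to be orthogonal to $\mathcal{Y}$ as well.
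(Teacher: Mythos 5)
Your argument is correct and coincides with the paper's proof: both apply $[\mathcal{Y},-]$ to the triangle $A_{t\ge 1}\to A\to \Ht^{\mathrm{t}}_0(A)\to A_{t\ge 1}[1]$ and kill the outer terms using $\mathcal{Y}\in{}^\perp\SH(k)_{t\ge 1}$. The paper simply leaves implicit the two points you spell out, namely the identification $A_{t\ge 0}\cong A$ and the inclusion $\SH(k)_{t\ge 2}\subset\SH(k)_{t\ge 1}$.
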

\begin{proof}
Consider the exact sequence
\[
[\mathcal{Y},A_{t\ge 1}] \to [\mathcal{Y},A] \to [\mathcal{Y},\Ht^{\mathrm{t}}_0(A)] \to [\mathcal{Y},A_{t\ge 1}[1]]
\]
associated to the triangle $A_{t\ge 1}\to A\to \Ht^{\mathrm{t}}_0(A)\to A_{t\ge 1}[1]$. Both the side terms are zero, whence the claim.
\end{proof}

\begin{corollary} \label{cor:coh_coh}
Let $X$ be a smooth projective variety and $U\subset X$ be an open subvariety with $Z=X-U$ being smooth. Then for $A\in \SH(k)_{t\ge 0}$ the canonical morphism $A\to \Ht^{\mathrm{t}}_0(A)$ induces isomorphisms 
\[
[(\Sigma_\T^\infty X_+)^\vee,A]\xrightarrow{\simeq} [(\Sigma_\T^\infty X_+)^\vee, \Ht^{\mathrm{t}}_0(A)],\quad
[(\Sigma_\T^\infty U_+)^\vee,A]\xrightarrow{\simeq} [(\Sigma_\T^\infty U_+)^\vee, \Ht^{\mathrm{t}}_0(A)].
\]
\end{corollary}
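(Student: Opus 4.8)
The plan is to reduce the statement to the two preceding lemmas. Both asserted isomorphisms are instances of Lemma~\ref{lm:cohomotopy-cohomology}, applied with $\mathcal{Y}=(\Sigma_\T^\infty X_+)^\vee$ and $\mathcal{Y}=(\Sigma_\T^\infty U_+)^\vee$ respectively; so the only thing to verify is that these two dual spectra lie in ${}^\perp\SH(k)_{t\ge 1}$. By Lemma~\ref{lm:dual_coh_dim} it suffices in turn to check that $\Sigma_\T^\infty X_+$ and $\Sigma_\T^\infty U_+$ are strongly dualizable objects of $\SH(k)_{t\ge 0}$.

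First I would handle strong dualizability. For $\Sigma_\T^\infty X_+$ this is exactly the content of Theorem~\ref{thm:Atiyah_duality}, since $X$ is smooth and projective. For $\Sigma_\T^\infty U_+$ I would invoke Remark~\ref{rem:U_dualizable}: the closed subvariety $Z=X-U$ is smooth, so the homotopy purity triangle
\[
\Sigma_\T^\infty U_+\to \Sigma_\T^\infty X_+ \to \Sigma_\T^\infty \Th(N_{Z/X}) \to \Sigma_\T^\infty U_+[1]
\]
exhibits $\Sigma_\T^\infty U_+$ inside a distinguished triangle whose other two vertices are strongly dualizable — the middle one by Theorem~\ref{thm:Atiyah_duality}, the Thom space by the generalization recorded in Remark~\ref{rem:U_dualizable}. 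Since the strongly dualizable objects form a triangulated subcategory, $\Sigma_\T^\infty U_+$ is strongly dualizable as well.

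Next I would record connectivity. Both $\Sigma_\T^\infty X_+$ and $\Sigma_\T^\infty U_+$ belong to $\SH(k)_{t\ge 0}$, because the suspension spectrum of any smooth variety is connective for the homotopy $t$-structure; this is Morel's stable connectivity theorem and is part of the basic background on $\SH(k)$. With strong dualizability and connectivity in hand, Lemma~\ref{lm:dual_coh_dim} places both $(\Sigma_\T^\infty X_+)^\vee$ and $(\Sigma_\T^\infty U_+)^\vee$ in ${}^\perp\SH(k)_{t\ge 1}$, and Lemma~\ref{lm:cohomotopy-cohomology} then produces the two claimed isomorphisms.

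I do not expect a genuine obstacle here: the corollary is essentially a bookkeeping combination of Theorem~\ref{thm:Atiyah_duality}, Remark~\ref{rem:U_dualizable}, and Lemmas~\ref{lm:dual_coh_dim} and~\ref{lm:cohomotopy-cohomology}. The only point that requires an input not proved earlier in the paper is the connectivity of suspension spectra of smooth varieties, which rests on Morel's connectivity theorem rather than on the internal machinery developed above.
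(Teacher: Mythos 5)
Your argument is correct and coincides with the paper's own proof: both establish strong dualizability of $\Sigma_\T^\infty X_+$ and $\Sigma_\T^\infty U_+$ via Theorem~\ref{thm:Atiyah_duality} and Remark~\ref{rem:U_dualizable}, cite Morel's connectivity theorem for membership in $\SH(k)_{t\ge 0}$, and then combine Lemmas~\ref{lm:dual_coh_dim} and~\ref{lm:cohomotopy-cohomology}. No gaps.
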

\begin{proof}
$\Sigma_\T^\infty X_+$ and $\Sigma_\T^\infty U_+$ are strongly dualizable by Theorem~\ref{thm:Atiyah_duality} and Remark~\ref{rem:U_dualizable}. Moreover, $\Sigma_\T^\infty X_+,\Sigma_\T^\infty U_+\in \SH(k)_{t\ge 0}$ by \cite[Theorem~4.2.10]{Mor04}. The claim follows from Lemmas~\ref{lm:dual_coh_dim} and~\ref{lm:cohomotopy-cohomology}.
\end{proof}

Recall the following cornerstone computation due to F.~Morel. 

\begin{theorem}[{\cite[Theorems~4.2.10 and~6.4.1]{Mor04} and \cite[Corollary~6.43]{Mor12}}] \label{thm:Morel} The sphere spectrum $\SSp$ is $(-1)$-connected (i.e. $\SSp\in \SH(k)_{t\ge 0}$) and $\underline{\pi}_0^{\A^{1}}(\SSp)_n\cong \K^{\MW}_n$.
\end{theorem}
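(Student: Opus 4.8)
The plan is to prove the two assertions by rather different means, as the connectivity statement is geometric while the identification of the zeroth homotopy sheaf is essentially a computation with generators and relations.

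\textbf{Connectivity.} To show $\SSp\in\SH(k)_{t\ge 0}$, i.e. that $\underline{\pi}_i^{\A^1}(\SSp)_n=0$ for $i<0$, I would deduce the stable estimate from the unstable $\A^1$-connectivity theorem. Each space in the sphere spectrum is a smash power $\T^{\wedge m}\cong (\Sph^1)^{\wedge m}\wedge\Gm^{\wedge m}$, which is simplicially $(m-1)$-connected before localization. The decisive input is that the $\A^1$-localization functor $L_{\A^1}$ on pointed Nisnevich-local spaces does not lower simplicial connectivity; granting this, $L_{\A^1}\T^{\wedge m}$ stays $(m-1)$-connected, and after looping and passing to the colimit that defines the stable homotopy sheaves one finds that the negative homotopy sheaves vanish. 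I expect the main obstacle to be the connectivity theorem itself: one must control a concrete singular (Suslin) model for $L_{\A^1}$ over the site of smooth schemes and verify, using the good behaviour of smooth henselian local schemes, that no connectivity is lost under the successive approximations — this is the genuinely hard geometric core.

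\textbf{Construction of the comparison map.} By the connectivity statement the graded sheaf $\bigoplus_n\underline{\pi}_0^{\A^1}(\SSp)_n$ is strictly $\A^1$-invariant, hence unramified, so it is determined by its sections over finitely generated field extensions of $k$ together with the residue structure. I would define a morphism $\K_*^{\MW}\to\bigoplus_n\underline{\pi}_0^{\A^1}(\SSp)_n$ on generators: a unit $u$ is sent to the class $[u]\in\underline{\pi}_0^{\A^1}(\SSp)_1(F)$ represented stably by the pointed map $\Spec F\to\Gm$ with value $u$, and the Hopf element $\eta\in\K_{-1}^{\MW}$ is sent to the stable class of the algebraic Hopf map $\A^2-0\to\Proj^1$, which lives in $\underline{\pi}_0^{\A^1}(\SSp)_{-1}$. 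Checking that this respects the defining relations of Milnor--Witt $\K$-theory — the Steinberg relation $[u][1-u]=0$, the twisted additivity $[uv]=[u]+[v]+\eta[u][v]$, and $\eta h=0$ with $h=\langle 1\rangle+\langle -1\rangle$ — is a direct manipulation with explicit $\A^1$-homotopies in the sphere spectrum.

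\textbf{Isomorphism.} It remains to evaluate on a field $F$ and show the comparison map is bijective. In weight $0$ this is Morel's computation $\underline{\pi}_0^{\A^1}(\SSp)_0(F)\cong\GW(F)=\K_0^{\MW}(F)$, obtained by analysing endomorphisms of the $\T$-sphere through $\A^1$-degree theory. For the remaining weights I would use the contraction (or $\Gm$-loop) functor, which lowers the weight by one and matches the contraction $(\K_n^{\MW})_{-1}\cong\K_{n-1}^{\MW}$ on the Milnor--Witt side, together with multiplication by $\eta$; anchored at weight $0$ these propagate the isomorphism to all weights once surjectivity is known. The main obstacle here is precisely surjectivity on fields — that the symbols $[u_1,\dots,u_n]$ and $\eta$ exhaust $\underline{\pi}_0^{\A^1}(\SSp)_n(F)$ and satisfy no relations beyond the Milnor--Witt ones — and this is where the full structure theory of strictly $\A^1$-invariant sheaves is indispensable.
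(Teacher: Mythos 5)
The paper does not prove this statement: it is imported verbatim from Morel (\cite[Theorems~4.2.10 and~6.4.1]{Mor04}, \cite[Corollary~6.43]{Mor12}), so there is no internal proof to compare against. Your outline is a faithful summary of Morel's actual strategy: stable connectivity deduced from the unstable $\A^1$-connectivity theorem applied to the $(m-1)$-connected spaces $\T^{\wedge m}\cong \Sph^{m}\wedge\Gm^{\wedge m}$, followed by an identification of $\bigoplus_n\underline{\pi}_0^{\A^1}(\SSp)_n$ with $\K_*^{\MW}$ via a symbol map sending units to classes of $\Spec F\to\Gm$ and $\eta$ to the algebraic Hopf map, anchored at weight $0$ by $[\SSp,\SSp]\cong\GW(k)$ and propagated by contractions. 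Two caveats on calibration. First, verifying the Steinberg relation $[u][1-u]=0$ in $\underline{\pi}_0^{\A^1}(\SSp)_2$ is not a ``direct manipulation with explicit $\A^1$-homotopies''; in Morel's treatment the hard content is rather that $\K_n^{\MW}$ is the \emph{free} strictly $\A^1$-invariant sheaf on $\Gm^{\wedge n}$ (equivalently the computation of $\underline{\pi}_n^{\A^1}(\Sph^n\wedge\Gm^{\wedge q})$ unstably, transported stably by the suspension theorem), and both surjectivity and the absence of extra relations come out of that structure theory rather than from symbol-by-symbol checks. Second, since the homotopy $t$-structure here quantifies over all twists $n\in\Z$, the connectivity argument must also cover $\Gm$-desuspensions; this works because $\Gm$ is $(-1)$-connected so $\T^{\wedge m}(n)$ remains $(m-1)$-connected, but it is worth saying. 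As a blind reconstruction of a cited theorem your sketch is accurate in architecture, with the genuinely hard inputs correctly located but, necessarily, not supplied.
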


\begin{remark}
There is another approach to the above computation given in \cite[Theorem~9.6]{N14} that is based on the theory of presheaves with framed transfers \cite{GP14,GP15}.
\end{remark}

\begin{theorem}[{cf. \cite[Theorem~4.3.1]{AH11}}] \label{thm:hom_variety}
Let $X$ be a smooth projective variety of dimension $d$. Then there exist canonical isomorphisms
\[
[\SSp,\Sigma_\T^\infty X_+ (l)]\cong \coh^d(\RS^X(X;\omega_X;\coh^t_0(\SSp)(d+l))) \cong\coh_{\Nis}^d(X;\K_{d+l}^{\MW}\otimes \omega_X)\cong \coh_{\Zar}^d(X;\K_{d+l}^{\MW}\otimes \omega_X).
\]
Here $\omega_X$ is the canonical line bundle and the complex $\RS^X(X;\omega_X;\coh^t_0(\SSp)(d+l))$ looks as follows:
\begin{multline*}
\bigoplus_{x\in X^{(0)}} \K^{\MW}_{d+l}(x; (\omega_{X})|_x)  \to \bigoplus_{x\in X^{(1)}} \K^{\MW}_{d+l-1}(x; \Lambda_x^X\otimes (\omega_{X})|_x)  \to \hdots \\
\hdots \to \bigoplus_{x\in X^{(d-1)}} \K^{\MW}_{l+1} (x; \Lambda_x^X\otimes (\omega_{X})|_x)  \to \bigoplus_{x\in X^{(d)}} \K^{\MW}_{l} (x).
\end{multline*}
Let $i\colon \Spec k\to X$ be a rational point and $l=0$. Then, under the above isomorphism, the morphism $\Sigma_\T^\infty i_+\in [\SSp,\Sigma_\T^\infty X_+]$ corresponds to the cohomology class of $\langle 1 \rangle_x\in \mathcal{RS}_d^X(X;\omega_X;\coh^t_0(\SSp)(d))$, where $\langle 1\rangle_x\in \mathrm{GW}(k)=\K^{\MW}_{0}(\Spec k)$ belongs to the summand corresponding to $x=i(\Spec k)\in X^{(d)}$.
\end{theorem}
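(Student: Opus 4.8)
The plan is to follow the Asok--Haesemeyer strategy: pass to the Spanier--Whitehead dual via Atiyah duality, replace the sphere spectrum by its heart truncation using the coniveau estimates, read off the group from the Rost--Schmid complex of the negative tangent bundle, and finally trace the image of a rational point through the whole chain. To begin, I would apply Atiyah duality (Theorem~\ref{thm:Atiyah_duality}) to obtain
\[
[\SSp, \Sigma_\T^\infty X_+(l)] \cong [(\Sigma_\T^\infty X_+)^\vee, \SSp(l)] = [\Th(-T_X), \SSp(l)].
\]
By Theorem~\ref{thm:Morel} one has $\SSp \in \SH(k)_{t\ge 0}$, and since the $\Gm$-twist shifts only the weight index of the homotopy sheaves (so that $\underline{\pi}_i^{\A^1}(\SSp(l))_n \cong \underline{\pi}_i^{\A^1}(\SSp)_{n+l}$) it is $t$-exact; thus $\SSp(l) \in \SH(k)_{t\ge 0}$ and $(l)$ commutes with truncation. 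Writing $M = \coh^t_0(\SSp)\in\heart$, so that $\Ht^{\mathrm{t}}_0(\SSp(l)) \cong M(l)$, Corollary~\ref{cor:coh_coh} gives
\[
[\Th(-T_X), \SSp(l)] \cong [\Th(-T_X), M(l)].
\]

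\textbf{Identifying the cohomology group.} Next I would unwind $\Th(-T_X) = \Sigma_\T^{-N}\Sigma_\T^\infty \Th(E)$ from Definition~\ref{def:Thom_stable_normal}, where $\rho\colon \widetilde{X}\to X$ is a Jouanolou device, $E$ is a rank $r = N-d$ bundle over $\widetilde{X}$, and $\rho^*T_X\oplus E\cong \triv^N_{\widetilde{X}}$. Using $\T\cong \Sph^1\wedge \Gm$ to move the desuspension past the coefficients, and rewriting $M(N+l) = M(d+l)(r)$ and $[N] = [r+d]$, this gives
\[
[\Th(-T_X), M(l)] = [\Th(E), M(d+l)(r)[r+d]].
\]
The splitting $\rho^*T_X\oplus E\cong \triv^N$ forces $\det E\cong \rho^*(\det T_X)^{-1}=\rho^*\omega_X$, so Lemma~\ref{lm:fibration_RS}, applied to the fibration $\rho$, the bundle $E$, the line bundle $L=\omega_X$ and the coefficient $M(d+l)\in\heart$, produces an isomorphism
\[
[\Th(E), M(d+l)(r)[r+d]] \cong \coh^d(\RS^X(X;\omega_X;M(d+l))).
\]
By Theorem~\ref{thm:Morel} again, $\underline{\pi}_0^{\A^1}(M(d+l))_{-p}\cong \underline{\pi}_0^{\A^1}(\SSp)_{d+l-p}\cong \K_{d+l-p}^{\MW}$, so the codimension-$p$ term of this complex is $\bigoplus_{x\in X^{(p)}}\K_{d+l-p}^{\MW}(x;\Lambda_x^X\otimes\omega_X|_x)$, the twist being (non-canonically) trivial over the closed points of $X^{(d)}$; this is the displayed complex. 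Finally the middle-to-right isomorphisms of Theorem~\ref{thm:Hom_coh}(2), for the line bundle $\omega_X$ over $X$ with coefficient $M(d+l)$, give
\[
\coh^d(\RS^X(X;\omega_X;M(d+l)))\cong \coh^d_{\Nis}(X;\K_{d+l}^{\MW}\otimes\omega_X)\cong \coh^d_{\Zar}(X;\K_{d+l}^{\MW}\otimes\omega_X),
\]
completing the chain of isomorphisms.

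\textbf{The rational point.} For the last assertion I set $l=0$ and trace $\Sigma_\T^\infty i_+$ through these identifications. Lemma~\ref{lm:dual_point} describes its image under Atiyah duality explicitly as the composite collapsing $\Th(E)$ onto $E/(E-z(\widetilde{x}))$, applying homotopy purity, and trivializing via $N_{\widetilde{x}/\widetilde{X}}\cong(\rho^*T_X)|_{\widetilde{x}}$ and $E\oplus\rho^*T_X\cong\triv^N$; after post-composing with $\SSp\to M$, as in Corollary~\ref{cor:coh_coh}, this is exactly $Q\circ d$ of the unit class $u\in\pi_0^{\A^1}(M(d))_{-d}(\widetilde{x};N_{\widetilde{x}/E})$ in the lower row of the compatibility square of Lemma~\ref{lm:fibration_RS}. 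Commutativity of that square gives $\Sigma_\T^\infty i_+ = \Theta\big(i_y(\phi(\Psi^{-1}(u)))\big)$, so under $\Theta^{-1}$ the morphism corresponds to the cohomology class of $\phi(\Psi^{-1}(u))\in (M(d))_{-d}(x;\Lambda_x^X\otimes\omega_X|_x)=\K_0^{\MW}(\Spec k)=\GW(k)$, placed in the summand indexed by $x$; it then remains to check $\phi(\Psi^{-1}(u))=\langle 1\rangle$, which yields the class of $\langle 1\rangle_x$.

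I expect the formal isomorphisms to be essentially bookkeeping once one records that $\Gm$-suspension is $t$-exact. The genuine obstacle is the final orientation computation: verifying that the trivializations supplied by Lemma~\ref{lm:dual_point} produce the unit form $\langle 1\rangle$ rather than some $\langle a\rangle$. This requires reconciling the trivialization of $N_{\widetilde{x}/E}$ coming from $\rho^*T_X\oplus E\cong\triv^N$ with the identification $\det E\cong\rho^*\omega_X$ built into $\Psi$ and $\phi$, keeping careful track of the $\Gm$-action on $\K_0^{\MW}=\GW$ through the non-canonical trivialization over the closed point.
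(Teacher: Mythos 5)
Your proposal follows the paper's proof exactly: Atiyah duality, replacement of $\SSp(l)$ by its heart truncation via Corollary~\ref{cor:coh_coh}, the suspension identification $[\Th(-T_X),M(l)]\cong[\Th(E),M(d+l)(r)[r+d]]$, Lemma~\ref{lm:fibration_RS} together with Morel's computation to read off the Rost--Schmid complex, Theorem~\ref{thm:Hom_coh} for the sheaf-cohomology identifications, and Lemma~\ref{lm:dual_point} for the rational point. The only addition is that you explicitly isolate the final check $\phi(\Psi^{-1}(u))=\langle 1\rangle$, which the paper leaves implicit in its appeal to Lemma~\ref{lm:dual_point}; this is a correct and slightly more careful rendering of the same argument.
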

\begin{proof}
Choose a Zariski locally trivial $\A^s$-bundle $\rho\colon \widetilde{X}\to X$ and an isomorphism $\rho^*T_X\oplus E\cong \triv^N_{\widetilde{X}}$ as in Definition~\ref{def:Thom_stable_normal}. We have the isomorphisms
\[
[\SSp,\Sigma_\T^\infty X_+(l)]\cong [\Th(-T_X),\SSp(l)]\cong [\Th(-T_X),\coh^t_0(\SSp)(l)]\cong [\Th(E),\coh^t_0(\SSp)(N+l)[N]].
\]
The first one is given by Theorem~\ref{thm:Atiyah_duality}, the second one follows from Corollary~\ref{cor:coh_coh} and the last one is given by suspension.

The first isomorphism of the theorem follows from Lemma~\ref{lm:fibration_RS} and Theorem~\ref{thm:Morel}. The identification of the cohomology of the Rost-Schmid complex and the sheaf cohomology follows from Theorem~\ref{thm:Hom_coh}. The identification of $\Sigma_\T^\infty i_+$ follows from Lemma~\ref{lm:dual_point}.
\end{proof}

\begin{theorem} \label{thm:hom_curve}
Let $C_o$ be a smooth curve with a smooth compactification $C$ and $D=C-C_o$. Then there are canonical isomorphisms
\begin{multline*}
[\SSp,\Sigma_\T^\infty (C_o)_+(l)]\cong \coh^1(\RS^C(C,D;\omega_C;\coh^t_0(\SSp)(l+1))) \cong\\
\cong \coh_{\Nis}^1(C;(\K_{l+1}^{\MW}\otimes \omega_C)_{C,D})\cong \coh_{\Zar}^1(C;(\K_{l+1}^{\MW}\otimes \omega_C)_{C,D}).
\end{multline*}
Here 
\begin{itemize}
	\item
	$\omega_C$ is the canonical line bundle,
	\item
	$(\K_{l+1}^{\MW}\otimes \omega_C)_{C,D}=\ker\left[\K_{l+1}^{\MW}\otimes \omega_C\xrightarrow{i_D^*} (i_D)_* (\K_{l+1}^{\MW} \otimes i_D^*\omega_C) \right]$ with $i_D$ being the closed embedding $i_D\colon D\to C$, 
	\item
	the complex $\RS^C(C,D;\omega_C;\coh^t_0(\SSp)(l+1))$ consists of two terms
	\[
	\K^{\MW}_{l+1}(\Spec \struct_{C,D}; (\omega_{C})|_{\Spec \struct_{C,D}}) \to \bigoplus_{x\in C_o^{(1)}} \K^{\MW}_{l} (x) \oplus \bigoplus_{x\in D} \K^{\MW}_{l+1} (x; (\omega_C)|_x).
	\]
\end{itemize}
Let $i\colon \Spec k\to C_o$ be a rational point and $l=0$. Then under the above isomorphism $\Sigma_\T^\infty i_+\in [\SSp,\Sigma_\T^\infty (C_o)_+]$ corresponds to the cohomology class of $\langle 1 \rangle_x\in \mathcal{RS}_1^C(C,D;\omega_C;\coh^t_0(\SSp)(1))$, where $\langle 1\rangle_x\in \mathrm{GW}(k)=\K^{\MW}_0(\Spec k)$ belongs to the summand corresponding to $x=i(\Spec k)$.
\end{theorem}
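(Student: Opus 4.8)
The plan is to run the proof of Theorem~\ref{thm:hom_variety} relative to $D$, replacing Atiyah duality for the projective $C$ by a relative version and then feeding the output into the two-term complex machinery of Section~4. First I would fix a Jouanolou model as in Definition~\ref{def:Thom_stable_normal}: a Zariski locally trivial $\A^s$-bundle $\rho\colon \widetilde{C}\to C$ with $\widetilde{C}$ affine, together with an isomorphism $\rho^*T_C\oplus E\cong \triv^N_{\widetilde{C}}$. Writing $\widetilde{D}=\rho^{-1}(D)$, the bundle $E$ has rank $r=N-1$ and, since $C$ is a curve, $\det E\cong \rho^*(\det T_C)^{-1}=\rho^*\omega_C$; this supplies exactly the data $(\rho,E,L=\omega_C,\theta)$ demanded by Lemma~\ref{lm:fibration_RS_CD}.

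The key geometric input is a relative form of Atiyah duality. Homotopy purity gives a cofiber sequence $\Sigma_\T^\infty (C_o)_+\to \Sigma_\T^\infty C_+\to \Sigma_\T^\infty\Th(N_{D/C})$, and $\Sigma_\T^\infty (C_o)_+$ is strongly dualizable by Remark~\ref{rem:U_dualizable}. Dualizing this triangle and combining Theorem~\ref{thm:Atiyah_duality} for $C$ with the identification $(\Sigma_\T^\infty\Th(N_{D/C}))^\vee\cong \Th(-(T_D\oplus N_{D/C}))=\Th(-T_C|_D)$ of Remark~\ref{rem:U_dualizable} (here $T_D=0$ and $N_{D/C}=T_C|_D$), I would obtain
\[
(\Sigma_\T^\infty (C_o)_+)^\vee\cong \Sigma_T^{-N}\Sigma_\T^\infty\bigl(\Th(E)/\Th(E|_{\widetilde{D}})\bigr),
\]
the map $\Th(E|_{\widetilde{D}})\to \Th(E)$ being the canonical inclusion. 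I expect this step—matching the dual of the purity triangle with the relative Thom spectrum functorially and compatibly with the evaluation pairing—to be the main obstacle, and I would extract it from the proofs cited in Theorem~\ref{thm:Atiyah_duality} and Remark~\ref{rem:U_dualizable}, in the same spirit as Lemma~\ref{lm:dual_point}.

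Granting the duality, I would assemble the chain
\[
[\SSp,\Sigma_\T^\infty (C_o)_+(l)]\cong [(\Sigma_\T^\infty (C_o)_+)^\vee,\SSp(l)]\cong [(\Sigma_\T^\infty (C_o)_+)^\vee,\coh^t_0(\SSp)(l)],
\]
where the second isomorphism is Corollary~\ref{cor:coh_coh} applied to $U=C_o\subset X=C$ with smooth complement $Z=D$ and $A=\SSp(l)\in\SH(k)_{t\ge 0}$ (using Theorem~\ref{thm:Morel} and stability of the homotopy $t$-structure under $\Gm$-suspension). Desuspending by $\Sigma_T^{-N}$ identifies the right-hand side with $[\Th(E)/\Th(E|_{\widetilde{D}}),\coh^t_0(\SSp)(N+l)[N]]$. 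Setting $M=\coh^t_0(\SSp)(l+1)\in\heart$, so that $M(r)[r+1]=\coh^t_0(\SSp)(N+l)[N]$, Lemma~\ref{lm:fibration_RS_CD} rewrites this group as $\coh^1(\RS^C(C,D;\omega_C;\coh^t_0(\SSp)(l+1)))$, and Theorem~\ref{thm:Hom_coh_CD} identifies the latter with $\coh^1_\Nis(C;(\K^{\MW}_{l+1}\otimes \omega_C)_{C,D})$, using $M_0=\underline{\pi}^{\A^1}_0(\SSp)_{l+1}\cong\K^{\MW}_{l+1}$ (Theorem~\ref{thm:Morel}) and $\det E\cong \rho^*\omega_C$. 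The explicit two-term shape of the complex is then the unwinding of Definition~\ref{def:RS_CD} with these coefficients, the residue-field twists being suppressed as in Definition~\ref{def:coniveau}.

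For the Zariski statement I would verify the surjectivity hypothesis of Theorem~\ref{thm:Hom_coh_CD}, namely that $\K^{\MW}_{l+1}(\struct_{C,x})\to \K^{\MW}_{l+1}(k(x))$ is onto for every closed $x\in C$; this is a standard property of Milnor--Witt $K$-theory over the discrete valuation ring $\struct_{C,x}$, as both units and symbols lift along the residue map. Finally, for a rational point $i\colon\Spec k\to C_o$ I would track $\Sigma_\T^\infty i_+$: Lemma~\ref{lm:dual_point} describes the dual morphism as the collapse of $\Th(E)/\Th(E|_{\widetilde{D}})$ onto $E/(E-\widetilde{x})$ followed by the purity trivialization, and the commuting square of Lemma~\ref{lm:fibration_RS_CD} (with $Q$ the relative quotient) then carries this to the class of $\langle 1\rangle_x$ in $\mathcal{RS}_1^C(C,D;\omega_C;\coh^t_0(\SSp)(1))$, exactly as in the projective case.
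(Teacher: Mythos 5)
Your proposal is correct and follows essentially the same route as the paper's proof: dualize the purity cofiber sequence, identify $(\Sigma_\T^\infty (C_o)_+)^\vee$ with $\Sigma_\T^{-N}\Sigma_\T^\infty(\Th(E)/\Th(E|_{\widetilde{D}}))$ via the Jouanolou model and the cited duality results, pass to $\coh^t_0(\SSp)$ by Corollary~\ref{cor:coh_coh}, and then apply Lemma~\ref{lm:fibration_RS_CD}, Theorem~\ref{thm:Hom_coh_CD}, the surjectivity of $\K^{\MW}_*(\struct_{C,x})\to\K^{\MW}_*(k(x))$, and Lemma~\ref{lm:dual_point}. The step you flag as the main obstacle (the compatibility of the dualized triangle with the relative Thom spectrum) is handled in the paper exactly as you propose, by appeal to the proofs in the references cited in Theorem~\ref{thm:Atiyah_duality} and Remark~\ref{rem:U_dualizable}.
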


\begin{proof}
Choose a Zariski locally trivial $\A^s$-bundle $\rho\colon \widetilde{C}\to C$ and an isomorphism $\rho^*T_{C} \oplus E \cong \triv^N_{\widetilde{C}}$ as in Definition~\ref{def:Thom_stable_normal}. 

Consider the triangle
\[
\Sigma_\T^\infty (C_o)_+ \to \Sigma_\T^\infty C_+ \to \Sigma_\T^\infty (C/C_o) \to \Sigma_\T^\infty (C_o)_+[1].
\]
Identifying $C/C_o\cong \Th(N_{D/C}) =((T_C)|_D)$ by the homotopy purity theorem \cite[Theorem 2.23]{MV99}, applying $\iHom_{\SH(k)}(-,\SSp)$ and rotating we obtain a triangle
\[
(\Sigma_\T^\infty \Th((T_C)|_D))^\vee \to (\Sigma_\T^\infty C_+)^\vee \to (\Sigma_\T^\infty (C_o)_+)^\vee \to (\Sigma_\T^\infty \Th((T_C)|_D))^\vee[1].
\]
Suspending with $\Sigma_\T^N$ and applying the isomorphism $(\Sigma_\T^\infty C_+)^\vee \cong \Th(-T_{C})$ from Theorem~\ref{thm:Atiyah_duality} we obtain a triangle
\[
\Sigma_\T^\infty \Th(E|_{\widetilde{D}}) \to \Sigma_\T^\infty \Th(E) \to \Sigma_\T^N (\Sigma_\T^\infty (C_o)_+)^\vee \to \Sigma_\T^\infty \Th(E|_{\widetilde{D}})[1].
\]
Here $\widetilde{D}=\rho^{-1}(D)$ and the identification $(\Sigma_\T^\infty \Th((T_C)|_D))^\vee\cong \Sigma_\T^\infty \Th(E|_{\widetilde{D}})$ is given by the proof of Theorem~\ref{thm:Atiyah_duality}, i.e. by the proofs of \cite[Theorem~A.1]{H05} and \cite[{Corollary~6.13}]{H15}. Moreover, it follows from loc. cit. that the first map in the triangle is induced by the embedding $D\to C$. Thus 
\[
C_o^\vee=(\Sigma_\T^\infty (C_o)_+)^\vee \cong \Sigma_\T^{-N} \Sigma_\T^\infty (\Th(E)/\Th(E|_{\widetilde{D}})).
\]

We have the isomorphisms
\[
[\SSp,\Sigma_\T^\infty (C_o)_+(l)]\cong [C_o^\vee,\SSp(l)]\cong [C_o^\vee,\coh^t_0(\SSp)(l)]\cong [\Sigma_\T^\infty \Th(E)/\Th(E|_{\widetilde{D}}),\coh^t_0(\SSp)(N+l)[N]].
\]
The first one is given by the strong dualizability of $\Sigma_\T^\infty (C_o)_+$, i.e. by Theorem~\ref{thm:Atiyah_duality} and Remark~\ref{rem:U_dualizable}, the second one follows from Corollary~\ref{cor:coh_coh} and the last one is given by the above considerations.

The first isomorphism of the theorem follows from Lemma~\ref{lm:fibration_RS_CD} and Theorem~\ref{thm:Morel}. The identification of the cohomology of the Rost-Schmid complex and the sheaf cohomology follows from Theorem~\ref{thm:Hom_coh_CD} and an observation that for a closed point $x\in C$ the restriction homomorphism 
\[
\K^\MW_{N+l}(\Spec \struct_{C,x}) \to \K^\MW_{N+l}(x) 
\]
is clearly surjective. The identification of $\Sigma_\T^\infty i_+$ follows from Lemma~\ref{lm:dual_point}.
\end{proof}

\begin{remark} \label{rem:hom_curve}
It follows from the surjectivity of the restriction morphism 
\[
\K^{\MW}_{1}(\Spec \struct_{C,D})\to \K_1^{\MW}(D)
\]
that $[\SSp,\Sigma_\T^\infty (C_o)_+]\cong \coh^1(\RS^C(C,D;\omega_C;\coh^t_0(\SSp)(1)))$ is canonically isomorphic to the cokernel
\[
(\K^{\MW}_{1})_{C,D}(\Spec \struct_{C,D}; (\omega_{C})|_{\Spec \struct_{C,D}}) \xrightarrow{\partial} \bigoplus_{x\in C_o^{(1)}} \mathrm{GW} (k(x)).
\]
This description is similar to the one given in \cite[Lemma~2.3]{SV96} for the relative Picard group.
\end{remark}

\appendix

\section{Some lemmas about presheaves with framed transfers}
In this section we follow the exposition of \cite{GP14} and \cite{GP15}.

\begin{definition}
Let $X$ and $Y$ be smooth varieties. An \textit{explicit framed correspondence} $\Phi$ of level $m$ from $X$ to $Y$ consists of the following data:
\begin{enumerate}
	\item
	a closed subset $S$ in $\A^m_X$ which is finite over $X$;
	\item
	an etale neighborhood $p\colon U\to \A^m_X$ of $S$;
	\item
	a collection of regular functions $\phi=(\phi_1,\phi_2,\hdots,\phi_m)$ on $U$ such that $S=\{\phi=0\}$;
	\item
	a regular morphism $U\to Y$.
\end{enumerate}
Two explicit framed correspondences $(S,U,\phi,g)$ and $(S',U',\phi',g')$ of level $m$ are said to be equivalent if $S=S'$ and there exists an etale neighborhood $V$ of $S$ in $U\times_{\A^m_X} U'$ such that $g\circ \pi_U=g'\circ \pi_U'$ and $\phi\circ \pi_U=\phi'\circ \pi_{U'}$ for the respective projections $\pi_U\colon V\to U$ and $\pi_{U'}\colon V\to U'$. The set of level $m$ framed correspondences (i.e. explicit framed correspondences up to the above equivalence) is denoted $\Fr_m(X,Y)$. Note that $\Fr_0(X,Y)=\mathrm{Mor}_{\Sm_{k,\bullet}}(X_+,Y_+)$ is the set of pointed regular morphisms between $X$ and $Y$ pointed externally. Put 
\[
\ZF_m(X,Y)=\Z[\Fr_m(X,Y)]/A,
\]
where $\Z[\Fr_m(X,Y)]$ is the free abelian group on the set of level $m$ framed correspondences and $A$ is the subgroup generated by the elements
\[
(S\sqcup S', U, \phi, g) - (S, U-S', \phi|_{U-S'}, g|_{U-S'}) - (S', U-S, \phi|_{U-S}, g|_{U-S}),
\]
and denote $\ZF_*(X,Y)=\bigoplus_{m\ge 0} \ZF_m(X,Y)$.

Let $X,Y$ and $Z$ be smooth varieties and let $\Phi=(S,U,\phi,g)\in \Fr_m(X,Y)$ and $\Psi=(T,W,\psi,h)\in \Fr_l(Y,Z)$ be explicit correspondences. Then we compose them in the following way (see the details in \cite{GP14}):
\[
\Psi \circ \Phi = (S\times_Y T,U\times_Y W, (\phi\circ \pi_U, \psi\circ \pi_W), h\circ \pi_W).
\]
One can show that this rule induces a composition $\ZF_m(X,Y)\times \ZF_l(Y,Z)\to \ZF_{m+l}(X,Z)$ that is associative. The \textit{category of linear framed correspondences} $\ZF_*(k)$ has objects those of $\Sm_k$ and morphisms are the abelian groups $\ZF_*(X,Y)$. A \textit{linear presheaf with framed transfers} is a contravariant functor from the category $\ZF_*(k)$ to the category of abelian groups.

We say that a linear presheaf with framed transfers $\mathcal{F}$ is \textit{homotopy invariant} if the canonical projection induces an isomorphism $\mathcal{F}(X)\cong \mathcal{F}(X\times \A^1)$ for every $X\in\Sm_k$. 

For every $X\in \Sm_k$ denote 
\[
\sigma_X=(\{0\}\times X,\A^1\times X, \pi_{\A^1},\pi_X)\in \Fr_1(X,X)
\]
the suspension morphism. Here $\pi_{\A^1}$ and $\pi_X$ are the respective projections. We say that a linear presheaf with framed transfers $\mathcal{F}$ is \textit{quasi-stable} if $\mathcal{F}(\sigma_X)$ is an isomorphism for every $X\in\Sm_k$.
\end{definition}

\begin{definition}\label{def:homotopy_transfers}
Let $X$ and $Y$ be smooth varieties. An explicit framed correspondence $\Phi=(S,U,\phi,g)$ of level $m$ from $X$ to $Y$ gives rise to a morphism of Nisnevich sheaves 
\[
\theta(\Phi)\colon (\Proj^1,\infty)^{\wedge m} \wedge X_+ \to \T^{\wedge m} \wedge Y_+
\]
in the following way. Consider commutative diagram
\[
\xymatrix{
	U - S \ar[r] \ar[d] & U \ar@/^1.5pc/[ddr]^{(\phi,g)} \ar[d]^p  & \\
	(\Proj^1)^{\times m} \times X - S \ar[r]^i \ar@/_1.5pc/[drr]^{q} & (\Proj^1)^{\times m} \times X & \\
	& & \T^{\wedge m}\wedge Y_+ \cong \A^n_Y/((\A^m-0) \times Y)
}
\]
Here the square is cartesian, $p$ is given by the composition $U\to \A^m_X\to (\Proj^1)^{\times m}\times X$ for the standard embedding $\A^1=\Proj^1-\infty\subset \Proj^1$, $i$ is the open embedding and $q$ is the constant morphism that maps $(\Proj^1)^{\times m}_X - S$ to the distinguished point. The square is a Nisnevich cover, thus we have a morphism of Nisnevich sheaves
\[
((\Proj^1)^{\times m} \times X)_+ \to \T^{\wedge m} \wedge Y_+
\]
that induces a morphism
\[
\theta(\Phi)\colon (\Proj^1,\infty)^{\wedge m} \wedge X_+ \to \T^{\wedge m} \wedge Y_+.
\]
One can show \cite[Lemma~5.2]{GP14} that this rule gives a natural bijection 
\[
\theta\colon \Fr_m(X,Y)\xrightarrow{\simeq} \operatorname{Map}_{{\mathrm{Shv_\Nis}}_\bullet}((\Proj^1,\infty)^{\wedge m} \wedge X_+,\T^{\wedge m} \wedge Y_+).
\]

For $A\in\SH(k)$ and $\Phi\in \Fr_m(X,Y)$ let
\[
\Phi^*\colon \pi^{\A^1}_i(A)_n(Y) \to \pi^{\A^1}_i(A)_n(X)
\]
be given by the composition
\begin{multline*}
\pi^{\A^1}_i(A)_n(Y)=[Y[i],A(n)]\cong [\T^{\wedge m}\wedge Y_+[i],\T^{\wedge m}\wedge A(n)] \xrightarrow{\theta(\Phi)^*} \\
\xrightarrow{\theta(\Phi)^*} [(\Proj^1,\infty)^{\wedge m}\wedge X_+[i],\T^{\wedge m}\wedge A(n)] \cong [T^{\wedge m}\wedge X_+[i],\T^{\wedge m}\wedge A(n)] \cong 
\\ \cong [X[i],A(n)] = \pi^{\A^1}_i(A)_n(X).
\end{multline*}
Here we used the canonical isomorphism $\T\cong (\PP^1,\infty)$ and the suspension isomorphism $\Sigma^m_{\T}$. One can check that this rule endows the presheaf of homotopy groups $\pi^{\A^1}_i(A)_n(-)$ with the structure of a \textit{homotopy invariant quasi-stable linear presheaf with framed transfers} (see the discussion of radditive presheaves in \cite[page~2]{GP15}).
\end{definition}

\begin{lemma}[{cf.~\cite[Theorem~22.2]{MVW06}}] \label{lm:ZarNis}
Let $\mathcal{F}$ be a homotopy invariant quasi-stable linear presheaf with framed transfers, e.g. $\mathcal{F}=\pi^{\A^1}_i(A)_n$ for $A\in\SH(k)$. Then the associated Zariski sheaf $\underline{\mathcal{F}}$ is a Nisnevich sheaf.
\end{lemma}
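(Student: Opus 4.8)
The plan is to follow Voevodsky's strategy for homotopy invariant presheaves with transfers, transporting each step to the framed setting of \cite{GP15}. Recall the Morel--Voevodsky criterion: a Zariski sheaf is a Nisnevich sheaf precisely when it carries every elementary distinguished square to a cartesian square. Since $\underline{\mathcal{F}}$ is already a Zariski sheaf, it therefore suffices to control the genuinely new Nisnevich covers, namely étale maps $V\to U$ restricting to an isomorphism over a closed subscheme $Z\subset U$, together with the open complement $U-Z$. The way I would organize this is to establish for $\underline{\mathcal{F}}$ the two classical properties: (i) \emph{injectivity on local schemes}, i.e. for a smooth local scheme $\Spec \struct_{X,x}$ the restriction to the generic point is injective; and (ii) \emph{Nisnevich excision}. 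Together with homotopy invariance these imply, by the argument of \cite[Theorem~22.2]{MVW06}, that the Zariski and Nisnevich sheafifications of $\mathcal{F}$ coincide, whence $\underline{\mathcal{F}}$ is already a Nisnevich sheaf.

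These two properties are where the framed transfer structure enters, playing exactly the role Voevodsky's finite correspondences play in \cite{MVW06}. To prove injectivity one constructs, for a section that vanishes generically, a framed correspondence splitting the relevant restriction map; homotopy invariance lets one deform this splitting, and quasi-stability guarantees that the normalizing suspension $\sigma_X$ acts invertibly, so that the construction is consistent modulo the stabilization built into $\ZF_*(k)$. For excision one argues similarly along an étale neighborhood. The geometric heart of both arguments is the Gabber--Voevodsky geometric presentation lemma: over an infinite perfect field $k$, a smooth scheme together with a point and a closed subset of positive codimension admits a Nisnevich neighborhood that is étale over a relative affine line, which furnishes the correspondences needed. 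This is precisely the step that uses the standing hypotheses that $k$ is infinite and perfect; its framed incarnation, producing honest elements of $\Fr_m$ with the required vanishing and gluing behavior, is supplied by \cite{GP14,GP15}.

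The main obstacle is this framed presentation step: one must realize Voevodsky's transfer-theoretic splittings by explicit framed correspondences $(S,U,\phi,g)$ and verify that passing to the quotient groups $\ZF_m$ and invoking quasi-stability reproduce the homotopies required for injectivity and excision. Once these are in hand the remainder is formal: injectivity together with excision force the Zariski stalks of $\underline{\mathcal{F}}$ to agree with its Henselian (Nisnevich) stalks, and the Morel--Voevodsky criterion then shows that $\underline{\mathcal{F}}$ sends elementary distinguished squares to cartesian squares, i.e. it is a Nisnevich sheaf. For the specific presheaves $\pi^{\A^1}_i(A)_n$, the required structure of a homotopy invariant quasi-stable linear presheaf with framed transfers was set up in Definition~\ref{def:homotopy_transfers}, so the general statement applies to them.
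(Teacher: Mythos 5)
Your argument is correct in substance and rests on the same external inputs as the paper---the Garkusha--Panin framed analogues of Voevodsky's structural theorems---but the final assembly is genuinely different. You propose to verify the Morel--Voevodsky elementary-distinguished-square criterion for $\underline{\mathcal{F}}$ by first establishing injectivity on local schemes and Nisnevich excision; this is the route of \cite[Theorem~22.2]{MVW06}, and it works, but it obliges you to know that the \emph{Zariski} sheafification still carries a homotopy invariant quasi-stable framed structure (or to prove excision at the level of $\mathcal{F}$ and descend it), and to globalize from local excision to the cartesian-square property. The paper sidesteps excision and the distinguished-square criterion entirely: it invokes \cite[Theorem~2.1]{GP15}, which says that the \emph{Nisnevich} sheafification $\underline{\mathcal{F}}^{\Nis}$ is again a homotopy invariant quasi-stable linear presheaf with framed transfers and that $\mathcal{F}\to\underline{\mathcal{F}}^{\Nis}$ is a morphism of such, and then applies the single injectivity statement of \cite[Theorem~2.15(3)]{GP15} (restriction to the generic point of a smooth local scheme is injective) to $\mathcal{F}$, to $\underline{\mathcal{F}}^{\Nis}$, and to the quotient presheaf $\underline{\mathcal{F}}^{\Nis}/\mathcal{F}$. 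Since all three embed into their generic-point values and $\mathcal{F}(\Spec F)\cong\underline{\mathcal{F}}^{\Nis}(\Spec F)$ for $F=k(X)$, a short diagram chase shows that $\mathcal{F}(\Spec\struct_{X,x})\to\underline{\mathcal{F}}^{\Nis}(\Spec\struct_{X,x})$ is bijective, so the two presheaves have the same Zariski stalks and $\underline{\mathcal{F}}\cong\underline{\mathcal{F}}^{\Nis}$ is a Nisnevich sheaf. Both routes prove the lemma; yours reconstructs more of the Voevodsky machinery explicitly (and correctly identifies the geometric presentation step as the hard point, deferring it to \cite{GP14,GP15}), while the paper's version is shorter because it lets \cite{GP15} package the sheafification step and reduces everything to one injectivity property.
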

\begin{proof}
By \cite[{Theorem~2.1}]{GP15} the associated Nisnevich sheaf $\underline{\mathcal{F}}^{\Nis}$ has a canonical structure of a linear presheaf with framed transfers that is homotopy invariant and quasi-stable. Moreover, the associated morphism
\[
\mathcal{F} \to \underline{\mathcal{F}}^{\Nis}
\]
is a morphism of linear presheaves with framed transfers. It is sufficient to check that for a smooth variety $X$ and a point $x\in X$ the corresponding morphism $\mathcal{F}(\Spec \struct_{X,x}) \to \underline{\mathcal{F}}^{\Nis}(\Spec \struct_{X,x})$ has trivial kernel and cokernel. Put $F=k(X)$ for the fraction field of $\struct_{X,x}$ and consider the following diagram.
\[
\xymatrix{
	\mathcal{F}(\Spec \struct_{X,x}) \ar[r] \ar[d] & \underline{\mathcal{F}}^{\Nis}(\Spec \struct_{X,x})  \ar[r] \ar[d] & (\underline{\mathcal{F}}^{\Nis}/\mathcal{F}) (\Spec \struct_{X,x}) \ar[d] \\
	\mathcal{F}(\Spec F) \ar[r]^{\simeq} & \underline{\mathcal{F}}^{\Nis}(\Spec F)  \ar[r] & (\underline{\mathcal{F}}^{\Nis}/\mathcal{F})(\Spec F)
}
\]
Here all the vertical morphisms are injective by \cite[Theorem~2.15(3)]{GP15}. Moreover, $F$ is a field thus $\mathcal{F}(\Spec F) \cong \underline{\mathcal{F}}^{\Nis}(\Spec F)$ and $(\underline{\mathcal{F}}^{\Nis}/\mathcal{F})(\Spec F)=0$. The claim follows.
\end{proof}

\begin{lemma} \label{lm:coh_semiloc}
Let $\mathcal{F}$ be a homotopy invariant quasi-stable Nisnevich sheaf with framed transfers, e.g. $\mathcal{F}=\underline{\pi}^{\A^1}_i(A)_n$ for $A\in\SH(k)$. Then for a smooth variety $X$ and a finite collection of closed points $D\subset X^{(d)}$ we have
\[
\coh^1_\Nis(\Spec \struct_{X,D},\mathcal{F})=0.
\]
\end{lemma}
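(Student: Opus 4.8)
The plan is to identify $\coh^1_{\Nis}(\Spec \struct_{X,D}, \mathcal{F})$ with the first cohomology of the Gersten (Cousin) complex of the semilocal scheme $W = \Spec \struct_{X,D}$, and then to invoke the Gersten resolution theorem for homotopy invariant quasi-stable framed sheaves, which forces that complex to be exact in positive degrees.

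First I would recall that, by the framed analogue of Voevodsky's theory developed in \cite{GP15} (the framed counterpart of \cite[Theorem~22.2]{MVW06}, and, in the special case $\mathcal{F}=M_0$ with $M\in\heart$, of Lemma~\ref{lm:acyclic_resolution}), the hypotheses on $\mathcal{F}$ — Nisnevich sheaf, homotopy invariant, quasi-stable, with framed transfers — guarantee that $\mathcal{F}$ is strictly homotopy invariant and admits a Cousin resolution $0 \to \mathcal{F} \to \mathcal{G}^\bullet$ on the small Nisnevich site of $X$, with $\mathcal{G}^p = \bigoplus_{x \in X^{(p)}} (i_x)_* \mathcal{F}_x^{(p)}$ and each $\mathcal{F}_x^{(p)}$ the appropriate coefficient sheaf on the small site of the point $x$ obtained from $\mathcal{F}$.

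Next I would use that each $\mathcal{G}^p$ is acyclic for Nisnevich cohomology by Lemma~\ref{lm:direct_image_acyclic} (applied over $W$, to which the lemma extends via the colimit convention for essentially smooth schemes). Splitting $0 \to \mathcal{F} \to \mathcal{G}^\bullet$ into short exact sequences and running the associated long exact cohomology sequences over $W$, the acyclicity of the terms gives
\[
\coh^m_{\Nis}(W, \mathcal{F}) \cong \coh^m(\Gamma(W, \mathcal{G}^\bullet)).
\]
Since $\Gamma(W, \mathcal{G}^p) = \bigoplus_{x \in W^{(p)}} \mathcal{F}_x^{(p)}(x)$, the right-hand side is the cohomology of the Gersten complex of the semilocal scheme $W$; in particular $\coh^1_{\Nis}(W, \mathcal{F})$ is the cohomology at the middle term of
\[
\bigoplus_{x \in W^{(0)}} \mathcal{F}_x^{(0)}(x) \to \bigoplus_{x \in W^{(1)}} \mathcal{F}_x^{(1)}(x) \to \bigoplus_{x \in W^{(2)}} \mathcal{F}_x^{(2)}(x).
\]

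The hard part, and the real content of the lemma, is the exactness of this Gersten complex in positive degrees. As $W$ is the Zariski (not henselian) semilocalization, this is not a stalkwise statement but the Gersten conjecture for $\mathcal{F}$, proved in \cite{GP15} by a Quillen-style geometric presentation (moving) argument that crucially exploits the framed transfer structure together with homotopy invariance; the injectivity statement \cite[Theorem~2.15(3)]{GP15} used in Lemma~\ref{lm:ZarNis} is exactly the degree-zero shadow (exactness at $\mathcal{F}(W)$) of this resolution. Granting exactness at the codimension-one term, the displayed three-term complex is exact in the middle, whence $\coh^1_{\Nis}(W, \mathcal{F}) = 0$. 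The same argument in fact yields $\coh^m_{\Nis}(W, \mathcal{F}) = 0$ for all $m > 0$, but only $m = 1$ is needed in the body of the paper.
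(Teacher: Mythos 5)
Your reduction of $\coh^1_{\Nis}(W,\mathcal{F})$ to the first cohomology of the Gersten complex of $W=\Spec\struct_{X,D}$ is sound in outline, but it relocates rather than resolves the difficulty, and the paper takes a different and more economical route. You correctly identify that, granting a Cousin resolution with acyclic terms, the statement becomes the exactness of
\[
\bigoplus_{x\in W^{(0)}}\mathcal{F}^{(0)}_x(x)\to\bigoplus_{x\in W^{(1)}}\mathcal{F}^{(1)}_x(x)\to\bigoplus_{x\in W^{(2)}}\mathcal{F}^{(2)}_x(x)
\]
at the middle spot; but since this exactness is, by your own chain of identifications, \emph{equivalent} to the vanishing you are trying to prove, the entire content of the lemma is deferred to an unlocated citation (``the Gersten conjecture for $\mathcal{F}$, proved in \cite{GP15} by a moving argument''). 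That exactness over a Zariski semilocalization is not among the results the paper extracts from \cite{GP15}, and the standard way one proves it is precisely the argument the paper actually uses. Note also that even your preliminary step (the Cousin complex being a resolution for a \emph{general} homotopy invariant quasi-stable Nisnevich sheaf with framed transfers) is only established in the paper for $\mathcal{F}=M_0$ with $M\in\heart$ (Lemma~\ref{lm:acyclic_resolution}), so for the lemma as stated you would need the framed analogue of Morel's Rost--Schmid resolution in full generality.

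The paper's proof is shorter and avoids the Gersten complex entirely: by \cite[Corollary~14.4 and Theorem~14.14]{GP15} the presheaf $U\mapsto\coh^1_{\Nis}(U,\mathcal{F})$ itself carries the structure of a homotopy invariant quasi-stable linear presheaf with framed transfers, and by \cite[Theorem~2.15(3)]{GP15} (whose proof goes through verbatim for semilocal schemes, as it does in your Lemma~\ref{lm:ZarNis}) the restriction
\[
\coh^1_{\Nis}(\Spec\struct_{X,D},\mathcal{F})\to\coh^1_{\Nis}(\Spec k(X),\mathcal{F})=0
\]
is injective, which finishes the proof in one line. If you want to salvage your approach, you should either prove the semilocal Gersten exactness (most naturally by exactly this injectivity-of-cohomology-presheaves argument, at which point the detour through the Cousin complex is superfluous) or give a precise reference where it is established for framed sheaves.
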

\begin{proof}
By \cite[Corollary~14.4 and Theorem~14.14]{GP15} presheaf $\coh^1_\Nis(-,\mathcal{F})$ has a canonical structure of a linear presheaf with framed transfers that is homotopy invariant and quasi-stable. Then 
\[
\coh^1_\Nis(\Spec \struct_{X,D},\mathcal{F}) \to \coh^1_\Nis(\Spec k(X),\mathcal{F}) = 0
\]
is injective by \cite[Theorem~2.15(3)]{GP15} (literally the same proof as in loc.cit. works for a semilocal scheme).
\end{proof}

\section*{Acknowledgments}
I would like to thank the participants of the $\A^1$-homotopy theory seminar in Chebyshev Laboratory and especially Mikhail Bondarko, Ivan Panin and Vladimir Sosnilo for valuable comments and suggestions. The research is supported by the Russian Science Foundation grant №14-21-00035.

\end{document}